\newtheorem{theorem}{Theorem}
\newtheorem{corollary}[theorem]{Corollary}
\newtheorem{definition}[theorem]{Definition}
\newtheorem{lemma}[theorem]{Lemma}
\newtheorem{proposition}[theorem]{Proposition}
\newtheorem{remark}[theorem]{Remark}
\numberwithin{equation}{section}
\begin{document}

\newcommand{\cc}{\mathfrak{c}}
\newcommand{\N}{\mathbb{N}}
\newcommand{\C}{\mathbb{C}}
\newcommand{\Q}{\mathbb{Q}}
\newcommand{\R}{\mathbb{R}}
\newcommand{\T}{\mathbb{T}}
\newcommand{\st}{*}
\newcommand{\PP}{\mathbb{P}}
\newcommand{\lin}{\left\langle}
\newcommand{\rin}{\right\rangle}
\newcommand{\SSS}{\mathbb{S}}
\newcommand{\forces}{\Vdash}
\newcommand{\dom}{\text{dom}}
\newcommand{\osc}{\text{osc}}
\newcommand{\F}{\mathcal{F}}
\newcommand{\A}{\mathcal{A}}
\newcommand{\B}{\mathcal{B}}
\newcommand{\I}{\mathcal{I}}
\newcommand{\CC}{\mathcal{C}}
\newcommand{\GG}{\mathbb{G}}
\newcommand{\UU}{\mathbb{U}}
\newcommand{\BB}{\mathbb{B}}

\title[On $\R$-embeddability of almost disjoint families]{On $\R$-embeddability of almost disjoint families and Akemann-Doner C*-algebras}

\author{Osvaldo Guzm\'{a}n}
\address{Centro de Ciencas Matem\'aticas\\
UNAM\\
A.P. 61-3, Xangari, Morelia, Michoac\'an\\
58089, M\'exico}
\email{oguzman@matmor.unam.mx}

\author{Michael Hru\v{s}\'{a}k}
\address{Centro de Ciencas Matem\'aticas\\
UNAM\\
A.P. 61-3, Xangari, Morelia, Michoac\'an\\
58089, M\'exico}
\email{michael@matmor.unam.mx}

\author{Piotr Koszmider}
\address{Institute of Mathematics, Polish Academy of Sciences,
ul. \'Sniadeckich 8,  00-656 Warszawa, Poland}
\email{\texttt{piotr.koszmider@impan.pl}}

\thanks{{The research of the second author was supported  by a PAPIIT grant IN100317 and CONACyT grant 285130.}}
\maketitle

\begin{abstract} An almost disjoint family $\A$ of subsets of $\N$ is said to be $\R$-embeddable if
there is a function $f:\N\rightarrow \R$ such that the sets  $f[A]$ 
 are ranges of real sequences converging to distinct reals for distinct $A\in \A$. It is well known
 that almost disjoint families which have few separations, such as Luzin families, are not $\R$-embeddable.
We study extraction principles related to  $\R$-embeddability and separation
properties of almost disjoint families of $\N$ as well as their limitations. An extraction principle whose consistency  is our main result
is:
\begin{itemize}
\item every almost disjoint family of size continuum contains an $\mathbb R$-embeddable subfamily of size continuum.
\end{itemize}
It is true in the Sacks model. The Cohen model serves to show that the above principle does not follow from the fact that every almost disjoint family of size continuum has two  separated subfamilies of size continuum.
 We also construct in
{\sf ZFC} an almost disjoint family, where no two uncountable subfamilies can be separated but
always a countable subfamily can be separated from any disjoint subfamily.

Using a refinement of the $\R$-embeddability property called a controlled $\R$-embedding
property we obtain the following results concerning Akemann-Doner C*-algebras which are
induced by uncountable almost disjoint families:
\begin{itemize}
\item In {\sf ZFC} there are Akemann-Doner C*-algebras of density  $\mathfrak c$ with no commutative subalgebras
of density  $\mathfrak c$,
\item It is independent from {\sf ZFC} whether there is an Akemann-Doner algebra of density $\mathfrak c$ with no nonseparable commutative subalgebra.
\end{itemize}
This completes an earlier result  that there is in ZFC an Akemann-Doner algebra
 of density $\omega_1$ with no nonseparable commutative subalgebra.
 \end{abstract} 

\section{ Introduction}

A family $\mathcal A$ of infinite subsets of $\N$ is \emph{almost disjoint} if any two distinct elements of $\mathcal A$ have finite intersection. 
The earliest uncountable almost disjoint families considered by Sierpi\'nski were
defined as the ranges of sequences of rationals converging to distinct reals.
Hence, we say that an almost disjoint family $\A$ is \emph{$\R$-embeddable} if there is
a  function (called an \emph{embedding}) $f:\N\rightarrow \R$ such that the sets $f[A]$ for $A\in \A$ are
the ranges of sequences converging to distinct reals (see e.g.\cite{guzman-hrusak, hernandez-hrusak}). 
Two families $\mathcal{B},\mathcal{C}$ of subsets of $\N$
 are \emph{separated }if there is $X\subseteq \N$ such that:
\begin{enumerate}
\item If $B\in\mathcal{B}$ then $B\setminus X$ is finite.
\item If $C\in\mathcal{C}$ then $C\cap X$ is finite.\qquad
\end{enumerate}
Considering disjoint neighbourhoods of two condensation points
of  the limits of converging sequences we see that $\R$-embeddable almost disjoint families contain many pairs of 
uncountable subfamilies which are separated. On the other hand 
it is an old and beautiful result of Luzin (\cite{luzin}) that 
there is an almost disjoint family $\mathcal A$
of size $\omega_1$ such that no two  uncountable subfamilies of $\mathcal A$ can be separated.
We will call such families \emph{inseparable}. To highlight the relationship between inseparable 
and $\mathbb R$-embeddable families, recall a dichotomy of \cite{guzman-hrusak} where it is shown that
assuming the proper forcing axiom ({\sf PFA}) every almost disjoint family 
of size $\omega_1$ is either $\mathbb R$-embeddable
 or contains an inseparable subfamily, while Dow \cite{dow} showed that under the
 same assumption every maximal almost disjoint family  contains an inseparable subfamily.

 An uncountable almost disjoint family $\A$ is called a \emph{Q-family} if for every $\B\subseteq \A$
the families $\B$ and $\A\setminus \B$ are separated (sometimes called a
\emph{separated} family). One of the earliest
applications of Martin's axiom  ({\sf MA}) was proving the consistency of 
 the existence of Q-families (which is false under the continuum hypothesis ({\sf CH}) by a counting argument).
All Q-families are $\R$-embeddable and moreover they have a stronger
uniformization type property: for every $\phi:\A\rightarrow \R$ there is $f:\N\rightarrow \R$
such that $f[A]$ is the range of a sequence converging to $\phi(A)$ (in other
words $\lim_{n\in A}(f(n)-\phi(A))=0$) for each $A\in \A$
(\cite[Propositions 2.1., 2.3]{hernandez-hrusak}).

It is natural, and useful (see e.g., \cite[Theorem 2.39]{injective}), to consider  
versions
of the above notions which are more cardinal specific: Let $\kappa$ be a cardinal, then 
\begin{itemize}
\item an almost disjoint family $\A$ has the
\emph{$\kappa$-controlled $\R$-embedding property} if for every   $\phi:\A\rightarrow \R$ there is 
$\B\subseteq \A$ of cardinality $\kappa$ and $f:\N\rightarrow \R$
such that $f[B]$ is the range of a sequence converging to $\phi(B)$ for every $B\in\B$,
\item   an almost disjoint family $\mathcal A$ of size $\kappa$  is \emph{$\kappa$-inseparable}
 if no two subfamilies  of $\mathcal A$ both of size $\kappa$ can be separated, 
\item   an almost disjoint family $\mathcal{A}$ is \emph{$\kappa$-anti Lusin} if it has cardinality 
$\kappa$ and for every subfamily $\mathcal{B}\subseteq A$ of cardinality $\kappa$
there are two subfamilies
$\mathcal{B}_{0},\mathcal{B}_{1}\subseteq \mathcal{B}$ of cardinality $\kappa$ which
can be separated (\cite{roitman-soukup}).
\end{itemize}
\smallskip

This paper is a contribution to the study of extraction principles for
almost disjoint families in the context of the above properties. Our main positive results 
concern the cardinality of the continuum $\mathfrak c$ and are:
\begin{itemize}
\item  It is consistent that  every almost disjoint family of size $\mathfrak c$ contains an
 $\mathbb R$-embeddable subfamily of size $\mathfrak c$ (Theorem \ref{sacks-main}).
\item  It is consistent that  every almost disjoint family of size $\mathfrak c$ has 
the $\omega_1$-controlled $\R$-embedding property (Theorem \ref{controlled-main}).
\item The above extraction principles are not  consequences of  
every  almost disjoint family of size $\mathfrak c$ containing a $\mathfrak c$-anti Luzin subfamily
(Theorems \ref{cohen-dow-hart} and \ref{cohen-main}).
\end{itemize}

The first two  extraction principles above are obtained in the iterated Sacks model.
As a side product we also prove  that in that model every partial function $f:X\rightarrow 2^\N$ for
$X\subseteq2^\N$ of cardinality $\mathfrak c$ is uniformly continuous on an
uncountable $Y\subseteq X$ (Theorem \ref{reversing}). We do not know
if the consistency of this property of functions can be concluded from
known results like in \cite{shelah-meager} or \cite{pawlikowski} or the fact that
under {\sf PFA} every function is monotone on an uncountable set (see \cite{baumgartner}).

The third result above is obtained in the Cohen model from  a  result of
 Dow and Hart (Theorem \ref{cohen-dow-hart}) stating that in that model every almost disjoint family is 
 $\mathfrak c$-anti Luzin (\cite[Proposition 2.6.]{dow-hart} using
 Stepr\=ans's characterization of $\mathcal P(\N)/Fin$ in that model (\cite{steprans})
and from the first of our negative results below:

\begin{itemize}
\item  In the Cohen model there is an almost disjoint family  of cardinality $\mathfrak c$
 with no uncountable $\R$-embeddable subfamily (Theorem \ref{cohen-main}).
\item  In the Cohen model no uncountable almost disjoint family has $\omega_1$-controlled $\R$-embedding
property (Theorem \ref{cohen-no-controlled}).
\end{itemize}

We should recall here that  by a result of A. Avil\'es, F. Cabello S\'anchez,  J. Castillo, M.  
Gonz\'alez and  Y. Moreno  it is consitent (follows from {\sf MA}) that $\mathfrak c$-inseparable
 families exist (\cite[Lemma 2.36]{injective}) ($\mathfrak c$-inseparable
 families are called $\mathfrak c$-Lusin families in \cite{dow-hart, injective}).

On the other hand, we also discover some {\sf ZFC} limitations to other extraction principles:
\begin{itemize}
\item No  almost disjoint family of size $\mathfrak c$ has  the $\mathfrak c$-controlled embedding property (Theorem \ref{no-c-controlled}).
\item There is in {\sf ZFC} an inseparable family of cardinality $\omega_1$ which
 has all possible separations (i.e., separating its countable parts
  from the rest of the family) (Corollary \ref{superlusin}).
\end{itemize}

 The second result is not only natural in the above context by showing that
one cannot even consistently hope for extracting from every inseparable family an uncountable subfamily
with even fewer separations (for example like Mr\'owka's family where one can only separate finite subfamilies
from the rest of the family). It has also found a natural application in a construction 
of a thin-tall scattered operator algebra in \cite{thin-tall}. Note that under
the hypothesis of $\mathfrak b>\omega_1$ all inseparable families have the properties
of our family from Corollary \ref{superlusin} (see \cite[Theorem 3.3]{vandouwen}).
\vskip 13pt
Some of the above results concerning the $\R$-embeddability of almost disjoint families find immediate
applications in the theory of C*-algebras. It was in the paper \cite{akemann-doner}
of Akemann and Doner where certain C*-algebras were associated to an almost disjoint
family $\A$ and a function $\phi: \A\rightarrow[0,2\pi)$. We call these algebras Akemann-Doner
algebras and denote them by $AD(\A, \phi)$. For the construction see Section 6 or the papers 
\cite{akemann-doner, tristan-piotr}. These algebras, initially for $\A$ and $\phi$ constructed only under
{\sf CH} in \cite{akemann-doner}, were the first examples providing negative answer to a question
of Dixmier whether every nonseparable C*-algebra must contain a nonseparable commutative C*-subalgebra.
Later S. Popa found in \cite{popa} a different  and a {\sf ZFC} example, the reduced group C*-algebra
of an uncountable free group. However, the latter C*-algebra is very complicated (e.g. it has no nontrivial idempotents \cite{voiculescu} etc.) while Akemann-Doner algebras are approximately finite dimensional in the sense of \cite{farah-katsura} that is, there is 
a directed family of finite-dimensional C*-subalgebras whose union is dense in the
entire C*-algebra. In \cite{tristan-piotr} it was noted that employing
an inseparable family $\A$ one can obtain in {\sf ZFC} a nonseparable  Akemann-Doner 
algebra with no nonseparable commutative subalgebra. Such {\sf ZFC} examples must be 
obtained from almost disjoint families $\A$ of cardinality $\omega_1$. This
is because we have, for example, the above mentioned
result of Dow and Hart that it is consistent that every almost disjoint family of
cardinality $\mathfrak c$ is $\mathfrak c$-anti-Lusin. The cardinality of the
almost disjoint family $\A$ is the density of the C*-algebra $AD(\A, \phi)$, that is minimal cardinality
of a norm-dense set.
Some natural questions remained, for example,
if one can have in {\sf ZFC} an Akemann-Doner algebra of density $\mathfrak c$ with no nonseparable
commutative subalgebra or another question  if it is consistent that every Akemann-Doner algebra of density $\mathfrak c$
has a commutative C*-subalgebra of density $\mathfrak c$. Here we answer these question
proving that:

\begin{itemize}
\item In {\sf ZFC} there are Akemann-Doner C*-algebras of density  $\mathfrak c$ with no commutative subalgebras
of density  $\mathfrak c$  (Theorem \ref{no-c-com-zfc}).
\item It is independent from {\sf ZFC} whether there is an Akemann-Doner algebra of density $\mathfrak c$ with no nonseparable commutative subalgebra (Theorem \ref{nonsep-com-sacks} and the result of \cite{akemann-doner}).
\end{itemize}

In fact, we also prove in Theorems \ref{no-nonsep-com-cohen} and \ref{superno-nonsep-com-cohen} 
that the existence of nonseparable commutative C*-subalgebras
in every Akemann-Doner algebra  does not follow from the negation of {\sf CH}.

The structure of the paper is as follows: in Section 2 we prove some preliminary {\sf ZFC} results
concerning $\R$-embeddability, Section 3 is devoted to the construction of an inseparable almost disjoint family where all countable parts can be separated from the remaining part of the family, Section 4  is devoted to the results mentioned above that hold in the Cohen model and Section 5 to the results that hold in the Sacks  model. The last section 6 concerns the consequences of the previous results for
the Akemann-Doner C*-algebras.

The set-theoretic terminology and notation is standard and can be found in \cite{jech}.
The knowledge on C*-algebras required to follow Section 6 does not exceed a linear algebra course
concerning $2\times 2$ matrices. Any additional background can be found in \cite{murphy}.

All almost disjoint families are assumed to  be infinite and consist of infinite sets. 
$A\subseteq^*B$ means that
$B\setminus A$ is finite. We use $\N$, $\R$, $\Q$ for nonnegative integers, reals and rationals respectively. When we view elements of $\N$ as von Neumann ordinals, i.e. subsets and/or elements of each other then we use
$\omega$ for $\N$. The cardinality of $\R$ is denoted by $\mathfrak c$.
 If $\kappa$ is a cardinal and $X$ is a  set, then  $[X]^\kappa$ denotes
the family of all subsets of $X$ of cardinality $\kappa$. In particular $[A]^2$ is the set
 of all pairs $\{a,b\}$  of elements of $A$. Elements of $A^n$ for $n\in \omega$ are 
$n$-tuples of $A$ i.e.,  $t= (t(0), t(2),\dots, t(n-1))$.  We consider
$2^{<\omega}=\bigcup_{n\in \omega}2^n$ with the inclusion as a tree,
we also consider its subtrees $T$ and then   $[T]$ denotes the set of all branches of $T$.
The terminology concerning the Cohen forcing $\C$ and the Sacks forcing $\SSS$ is recalled 
at the beginning of Sections 4 and 5 respectively.

\section{Preliminaries}

\subsection{$\R$-embedability of almost disjoint families}

Recall the definition of an $\R$-embeddable  almost disjoint family  from the introduction.
A useful tool for describing properties of almost disjoint families are $\Psi$-spaces associated with them (\cite{michael-ad}).
The $\Psi$-space corresponding to an almost disjoint family $\A\subseteq \wp(\N)$ whose 
points are identified with $\N\cup\A$ is denoted by $\Psi(\A)$.

\begin{lemma}\label{psi-correspondence} Suppose that $\A$ is an almost disjoint family.
There is a 1-1 correspondence between continuous functions $\phi: \Psi(\A)\rightarrow \R$
and functions $f:\N\rightarrow \R$ for which $x_A=\lim_{n\in A}f(n)$ exists for
each $A\in \A$. It is given by $f=\phi\upharpoonright\N$. Then $x_A=\phi(A)$ for
each $A\in \A$.
\end{lemma}

\begin{lemma}\label{equivalences} Let $\A\subseteq \wp(\N)$ be an  almost disjoint family.
Consider $\N^{<\omega}\cup \N^\omega$ with the topology where $\N^{<\omega}$ is
discrete and the basic neighbourhoods of  $x\in \N^\omega$ are of
the form
$$\{y\in \N^{<\omega}\cup \N^\omega\mid y(n)=x(n) \ \hbox{for all}\  n\in F\},$$
for any finite $F\subseteq\omega$.
The following conditions are equivalent (to the property of being $\R$-embeddable):
\begin{enumerate}

\item There is a continuous  $\phi: \Psi(\A)\rightarrow \R$ such that
 $\phi\upharpoonright\A$ is injective,

\item There is a continuous  $\phi: \Psi(\A)\rightarrow \R$ such that
 $\phi\upharpoonright\A$ is injective and $\phi[\A]$ has dense complement in $\R$,
 
 \item There is a continuous  $\phi: \Psi(\A)\rightarrow \R$ such that
 $\phi\upharpoonright\A$ is injective and $\phi[\A]\subseteq \R\setminus\Q$,

\item There is a continuous  $\phi: \Psi(\A)\rightarrow \R$ such that
 $\phi$ is injective,  $\phi[\A]\subseteq \R\setminus\Q$ and $\phi[\N]\subseteq \Q$,

\item There is a continuous  $\phi: \Psi(\A)\rightarrow \N^{<\omega}\cup \N^\omega$ such that
 $\phi$ is injective,  $\phi[\A]\subseteq \N^\omega$ and $\phi[\N]\subseteq \N^{<\omega}$,
 
 \item There is a continuous  $\phi: \Psi(\A)\rightarrow 2^\omega$ such that
 $\phi\upharpoonright\A$ is injective,

 \end{enumerate}
\end{lemma}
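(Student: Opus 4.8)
The plan is to prove the cycle of equivalences by establishing implications that traverse all six conditions. The natural strategy is to show that the base property ($\R$-embeddability) is equivalent to (1), then prove a chain $(1)\Rightarrow(2)\Rightarrow(3)\Rightarrow(4)\Rightarrow(5)\Rightarrow(6)\Rightarrow(1)$, closing the loop. Throughout, the bridge supplied by Lemma \ref{psi-correspondence} is essential: continuous $\phi:\Psi(\A)\to\R$ correspond exactly to functions $f:\N\to\R$ for which $\lim_{n\in A}f(n)$ exists for every $A\in\A$, and under this correspondence $\phi(A)=\lim_{n\in A}f(n)$. Thus ``$\R$-embeddable'' (the $f[A]$ are ranges of sequences converging to \emph{distinct} reals) translates precisely into the existence of a continuous $\phi$ on $\Psi(\A)$ whose restriction to $\A$ is injective, which is exactly (1). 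This handles the equivalence with the base property essentially for free.

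For the forward chain, most implications are refinements obtained by post-composing with a well-chosen homeomorphism or by a small perturbation of the embedding. For $(1)\Rightarrow(2)$ and $(1)\Rightarrow(3)$, the idea is that since $\A$ is countable or of arbitrary size but $\phi[\A]$ is just a set of reals, I can compose $\phi$ with a homeomorphism of $\R$ (or of an open interval into which I first map $\R$) that pushes the image off $\Q$ and leaves a dense complement; an increasing homeomorphism $\R\to\R\setminus\{\text{a fixed countable dense set}\}$ is not quite available, so instead I would note that the image $\phi[\A]$ together with $\phi[\N]$ is countable when $\A$ is, but in general I must argue more carefully --- composing with a homeomorphism onto the irrationals of an interval, using that $\R\setminus\Q$ is homeomorphic to $\N^\omega$. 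For $(3)\Rightarrow(4)$, I would additionally arrange $\phi[\N]\subseteq\Q$ by perturbing the values $f(n)=\phi(n)$ to nearby rationals, small enough that the limits along each $A$ are unchanged (hence still irrational), and injectivity on all of $\Psi(\A)$ is secured by choosing these rational approximants distinct from each other and from all limit values. The step $(4)\Rightarrow(5)$ is then a transport across the standard homeomorphism between $\R\setminus\Q$ and $\N^\omega$ extended to carry a chosen copy of $\Q$ into $\N^{<\omega}$, matching the topology described in the statement; and $(5)\Rightarrow(6)$ uses that $\N^{<\omega}\cup\N^\omega$ with the indicated topology embeds continuously and injectively into $2^\omega$ (coding finite and infinite sequences of naturals as branches/nodes of the binary tree). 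Finally $(6)\Rightarrow(1)$ is immediate by composing with any continuous injection $2^\omega\hookrightarrow\R$.

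The main obstacle I anticipate is the passage from injectivity on $\A$ alone to injectivity on the whole space $\Psi(\A)$ (the jump from (3) to (4)--(6)). On the isolated points $\N$ the map $\phi\upharpoonright\N=f$ need not be injective a priori, and making it injective while (i) preserving all the limits $\phi(A)$, (ii) keeping $\phi[\N]$ and $\phi[\A]$ in disjoint target sets ($\Q$ versus $\R\setminus\Q$, or $\N^{<\omega}$ versus $\N^\omega$), and (iii) not colliding any $f(n)$ with any limit value, requires a careful simultaneous perturbation. I would carry this out by enumerating $\N$ and choosing $f'(n)$ to be a rational within $2^{-n}$ of $f(n)$, distinct from all previously chosen $f'(m)$ and from the (at most countably many relevant) limit values; the $2^{-n}$ control guarantees $\lim_{n\in A}f'(n)=\lim_{n\in A}f(n)$ for every $A$, so the limits stay put and stay irrational, while distinctness gives global injectivity. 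Continuity of the resulting $\phi'$ on $\Psi(\A)$ follows again from Lemma \ref{psi-correspondence}. Once this delicate step is in place, the remaining implications are routine applications of the standard homeomorphisms among $\R\setminus\Q$, $\N^\omega$, and subspaces of $2^\omega$.
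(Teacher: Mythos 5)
Your overall architecture (base property $\Leftrightarrow$ (1) via Lemma \ref{psi-correspondence}, then a cycle through (2)--(6)) matches the paper, and the later steps of your chain are essentially the paper's: perturbing the values on $\N$ to distinct rationals with vanishing error for (3)$\Rightarrow$(4), transporting across a bijection $\N^{<\omega}\cup\N^\omega\rightarrow\R$ that sends $\N^{<\omega}$ onto $\Q$ and $\N^\omega$ onto $\R\setminus\Q$ for (4)$\Rightarrow$(5), and composing with continuous injections for (5)$\Rightarrow$(6)$\Rightarrow$(1). Moreover, the step you single out as the main obstacle --- upgrading injectivity from $\A$ to all of $\Psi(\A)$ --- is in fact the routine part, and your perturbation handles it correctly (collisions between $\phi[\N]\subseteq\Q$ and $\phi[\A]\subseteq\R\setminus\Q$ are impossible automatically).

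The genuine gap is at (1)$\Rightarrow$(2)/(3), which you propose to handle by post-composing $\phi$ with a homeomorphism, or with a continuous injection into the irrationals. This cannot work, because $\A$ may have cardinality $\mathfrak c$ and $\phi[\A]$ may contain an interval --- indeed $\phi[\A]$ can equal $[0,1]$: identify $\N$ with $2^{<\omega}$, take the branch family $A_x=\{x\upharpoonright n\mid n\in\omega\}$ for $x\in 2^\omega$, let $\phi(x\upharpoonright n)=\sum_{i<n}x(i)2^{-i-1}$ so that $\phi(A_x)=\sum_{i<\omega}x(i)2^{-i-1}$, and keep one branch per point of $[0,1]$ to make $\phi\upharpoonright\A$ injective. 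Any continuous $h$ which is injective on $\phi[\A]\supseteq[0,1]$ (injectivity there is forced if $h\circ\phi$ is to stay injective on $\A$) maps $[0,1]$ onto a nondegenerate interval, which contains rationals and fails to have dense complement; and there is no continuous injection of $[0,1]$ into $\R\setminus\Q$ at all, since connected subsets of the irrationals are singletons. So no map applied after $\phi$ can yield (2) or (3); the assignment $A\mapsto\phi(A)$ itself must be changed. That is exactly what the paper's proof of (1)$\Rightarrow$(2) does, and it is the one nontrivial step of the lemma: let $U$ be the union of all open intervals contained in $\phi[\A]$ (if $U=\emptyset$ we are done), fix a countable dense $E=\{e_n\mid n\in\N\}\subseteq U$, choose distinct points $x^n_k\in\phi[\A]$ with $x^n_0=e_n$ and $|x^n_k-x^n_{k+1}|<1/(n+k)$, pick $A^n_k\in\A$ with $\phi(A^n_k)=x^n_k$, and redefine $\phi$ so that $A^n_k$ and a cofinite subset of its elements get the value $x^n_{k+1}$ --- a Hilbert-hotel shift inside $\phi[\A]$ which preserves continuity and injectivity on $\A$ while removing the dense set $E$ from the image. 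Your proposal, by restricting itself to post-composition, has no mechanism for reassigning limit values in this way, and that is precisely what the hard case requires.
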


\begin{proof} 

(1) $\Rightarrow$ (2)  We may assume that $\A$ is infinite.
Let $U\subseteq \R$ be the union of all open intervals included in $\phi[\A]$.
If it is empty, we are done. Otherwise
let $E=\{e_n\mid n\in \N\}\subseteq U$ be countable and dense in $U$.
 A continuous $\phi':\Psi(\A)\rightarrow \R\setminus E$
such that $\phi'[\Psi(\A)]\subseteq \phi[\Psi(\A)]$ will satisfy (2). Let $\{x^n_k\mid n, k\in \N\}\subseteq \phi[\A]$ be distinct
where $x^n_0=e_n$ for each $n\in \N$
and such that $|x^n_k-x^n_{k+1}|<1/(n+k)$.
 We may choose such $x_n^k$s since $e_n$'s are in the interior of $\phi[\A]$.
Let $A^n_k\in \A$ be such that $\phi(A^n_k)=x^n_k$ for each $n, k\in \N$. 
Find finite $G^n_k\subseteq A^n_k$ so that $A^n_k\setminus G^n_k$'s are all pairwise disjoint and 
$|\phi(i)-x^n_k|<1/(n+k)$ for each $i\in A^n_k\setminus G^n_k$ for each $n, k\in \N$.

Modify $\phi$ to obtain $\phi'$
in the following way: Put $\phi'\upharpoonright
A^n_k\setminus G^n_k$ to be constantly $x^n_{k+1}$ for each
$n, k\in \N$ and $\phi'(A^n_k)=x^n_{k+1}$ for each
$n, k\in \N$ and put $\phi'$ to be equal to $\phi$ on the remaining points of $\Psi(\A)$.

Injectivity of $\phi'\upharpoonright \A$ and the inclusion 
$\phi'[\Psi(\A)]\subseteq \phi[\Psi(\A)]\setminus E$ are clear. 
So we are left with the continuity. $\phi'$ is clearly continuous at each $A^n_k$ for $n, k\in \N$.
Let $A\in \A$ be distinct than each $A^n_k$. Then each intersection $A\cap A^n_k$ is finite.
As $|\phi'(i)-\phi(i)|<2/(n+k)$ for $i\in A^n_k$ for each $n, k\in N$, it follows that
$\lim_{i\in A}|\phi'(i)-\phi(i)|=0$, that is 
$$\lim_{i\in A}\phi'(i)=\lim_{i\in A}\phi(i)=\phi(A)=\phi'(A).$$

(2) $\Rightarrow$ (3)  Choose dense countable $E\subseteq \R\setminus \phi[\A]$. 
Let $\eta:\R\rightarrow \R$ be a homeomorphism such that $\eta[E]=\Q$ and consider
$\phi'=\eta\circ\phi$.

(3) $\Rightarrow$ (4)  Take $\phi$ satisfying (3) and modify it on $\N$ to obtain $\phi'$
in such a way that $\phi'(n)$s are distinct rationals for all $n\in \N$ and 
$|\phi(n)-\phi'(n)|<1/n$ for all $n\in\N$.

(4) $\Leftrightarrow$ (5) First we construct certain bijection 
$\rho: \N^{<\omega}\cup \N^\omega\rightarrow \R$ such that $\rho[\N^{<\omega}]=\Q$
and $\rho[\N^\omega]=\R\setminus\Q$. First define a family
$(I_s\mid s\in\N^{<\omega})$ of open intervals with rational end-points with the following
properties:
\begin{enumerate}
\item $I_\emptyset=\R$,
\item $\bigcup\{{\overline{I_{s^\frown n}}}\mid n\in \N\}=I_s$,
\item each end-point of an interval $I_s$ is an endpoint of another
interval $I_{s'}$ for $|s|=|s'|$,
\item the diameter of $I_s$ is smaller than $1/|s|$ for $s\not=\emptyset$,
\item for every $s\in \N^{<\omega}$ we have $I_{s^\frown n}\cap I_{s^\frown n'}=\emptyset$
for distinct $n, n'\in \N$,
\item Every rational is used as an end-point of two (and necessarily only two adjacent, by the previous properties) of the intervals $I_s$ for $s\in \N^{<\omega}$. $0$ is end-point of
two of $I_s$s for some $|s|=1$.
\end{enumerate}
First define $\rho$ on $\N^{<\omega}$ by
defining $\rho(s)$ by induction on  $|s|$. Let $\rho(\emptyset)=0$.
If $|s|=1$, then $\rho(s)$ is the right end-point of $I_s$ if $I_s$ consists of positive reals
and $\rho(s)$ is the left end-point of $I_s$ if $I_s$ consists of negative reals.
If $|s|>1$, then $\rho(s)$ is the left end-point of $I_s$.
For $x\in \N^\omega$ let $\rho(x)$ be the only point of
$\bigcap_{n\in \omega}I_{x\upharpoonright n}$.

Note that $\rho$ is continuous and that $\rho^{-1}(x_n)\rightarrow \rho^{-1}(x)$
if $x_n$ is a sequence of rationals converging to an irrational $x$. This
proves (4) $\Leftrightarrow$ (5).

(5) $\Rightarrow$ (6) First note that there  $\eta:  \N^{<\omega}\cup \N^\omega\rightarrow \N^\omega$
which is continuous and the identity on $\N^\omega$. Namely send $s\in \N^{<\omega}$
to the sequence $s^\frown0^\omega$. Now note that there is $\zeta:\N^\omega\rightarrow 2^\omega$
which is continuous. So use the composition of these functions to obtain (6) from (5).

(6) $\Rightarrow$ (1)  is clear.
\end{proof}

\begin{remark}\label{remark-embedding} Using Lemma \ref{psi-correspondence} we obtain versions of the conditions
from Lemma \ref{equivalences} for functions from $\N$ into $\R$. In particular the definition
of an $\R$-embeddable almost disjoint from the introduction which is
 a version of item (1) of Lemma \ref{equivalences}  is equivalent 
 to version in the literature, e.g. in \cite{hernandez-hrusak} which
are versions of item (4) of Lemma \ref{equivalences}.
\end{remark}

The following is a simple condition that allows us to get $\R$-embeddability.

\begin{lemma}\label{tree-condition}
Let $T\subseteq2^{<\omega}$ be a tree, $Z\subseteq\left[  T\right]  $ and
$\mathcal{A}=\left\{  A_{r}\mid r\in Z\right\}  $ an almost disjoint  family of
subsets of $\N$. If
there is a family $\left\{  B_{s}\mid s\in T\right\}  \subseteq\left[
\N\right]  ^{\omega}$ with the following properties:

\begin{enumerate}
\item   $B_{t}=\bigcup\{B_{t^\frown i}\mid t^\frown i\in T, \ i\in \{0,1\}\}$ for all $t\in T$,
\item $B_{s}\cap B_{t}$ is finite whenever $s, t \in T$ are incompatible.
\item $A_{r}\subseteq\bigcap_{n\in\omega}
B_{r\upharpoonright n}$ for every $r\in Z.$
\end{enumerate}
Then, $\mathcal{A}$ is $\R$-embeddable.
\end{lemma}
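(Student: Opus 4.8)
The plan is to produce, via Lemma \ref{psi-correspondence}, a function $f:\N\to 2^\omega$ for which $\lim_{n\in A}f(n)$ exists for every $A\in\A$ and the limits are pairwise distinct; the associated continuous $\phi:\Psi(\A)\to 2^\omega$ then witnesses condition (6) of Lemma \ref{equivalences}, so $\A$ is $\R$-embeddable. Identifying $Z\subseteq[T]\subseteq 2^\omega$, I will arrange $\lim_{n\in A_r}f(n)=r$, which automatically gives distinct limits since distinct branches are distinct points of $2^\omega$.

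First I would record two structural consequences of the hypotheses. From (1), $B_{t^\frown i}\subseteq B_t$ whenever $t^\frown i\in T$, so the sets $B_s$ decrease along each branch; and since each $B_s$ is infinite while (1) writes $B_t$ as the union of the $B_{t^\frown i}$ over the children of $t$ in $T$, no node of $T$ can be a leaf, i.e.\ $T$ is pruned. Moreover $A_r\subseteq B_{r\upharpoonright k}$ for all $k$ by (3); in particular $A_r\subseteq B_\emptyset$, so every point of every $A_r$ lies in $B_\emptyset$.

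The key definition is a greedy descent through the tree. For $n\in B_\emptyset$ I build a branch $f(n)\in[T]$ by setting $t_0(n)=\emptyset$ and, given $t_k(n)\in T$ with $n\in B_{t_k(n)}$, letting $t_{k+1}(n)$ be a child of $t_k(n)$ in $T$ with $n\in B_{t_{k+1}(n)}$; such a child exists by (1), and I break ties by preferring the $0$-child. Because $T$ is pruned this process never terminates, so $f(n)=\bigcup_k t_k(n)\in 2^\omega$ is well defined. For $n\notin B_\emptyset$ I set $f(n)=0^\omega$; such $n$ lie in no $A_r$ and hence do not affect any limit.

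Finally I would verify $\lim_{n\in A_r}f(n)=r$ by induction on the level $k$, proving that $t_k(n)=r\upharpoonright k$ for all but finitely many $n\in A_r$, which is exactly coordinatewise convergence of $f\upharpoonright A_r$ to $r$ in $2^\omega$. The step that carries the content, and the main obstacle, is controlling the greedy tie-breaking: a priori the descent at $r\upharpoonright k$ could turn into the sibling $c'$ of $r\upharpoonright(k+1)$ rather than into $r\upharpoonright(k+1)$ itself. But $c'$ and $r\upharpoonright(k+1)$ are incompatible, so $B_{c'}\cap B_{r\upharpoonright(k+1)}$ is finite by (2), whereas every $n\in A_r$ lies in $B_{r\upharpoonright(k+1)}$ by (3); hence only finitely many $n\in A_r$ can satisfy $n\in B_{c'}$, and all the remaining $n$ with $t_k(n)=r\upharpoonright k$ are forced to choose $t_{k+1}(n)=r\upharpoonright(k+1)$. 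This closes the induction and completes the proof.
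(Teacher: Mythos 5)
Your proof is correct and follows essentially the same route as the paper: both set $\phi(A_r)=r$ and send each $n\in\N$ to a point of $2^\omega$ determined by the nodes $s$ with $n\in B_s$, then use (2) together with (3) to show that along $A_r$ these node choices eventually agree with $r$ level by level, which verifies the $2^\omega$-valued condition of Lemma \ref{equivalences}. The only difference is bookkeeping: the paper selects a single (lexicographically first) node of length at least $n$ and pads it with $0^\omega$, whereas you build a full branch by greedy descent and run an induction on levels.
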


\begin{proof}

Define  $\phi:\Psi(\A)\longrightarrow 2^\omega$ by puting $\phi(A_r)=r$ for all $r\in Z$
and $\phi(n)=s^\frown0^\omega$ if $n\in B_s$, $|s|\geq n$ and $s$ is the first in the lexicographic
order which satisfies the previous requirements. If there is no such $s\in 2^{<\omega}$, then put
$\phi(n)=0^\omega$.  Clearly $\phi\upharpoonright \A$ is injective, so we are left with the continuity to
check (1) of Lemma \ref{equivalences}.

 By (3) if $k\in A_r$, then $k\in B_{r\upharpoonright n}$ for
every $n\in \omega$. Fix $n\in\omega$. So if we take 
$$k\in A_r\setminus \bigcup\{B_t\mid |t|=n, \ t\not=r|n     \},\leqno (*)$$
then the condition `` $k\in B_s$ and $|s|\geq n$" implies 
$r\upharpoonright n\subseteq s$ by (1). By (2) the set
in (*) almost covers $A_r$, and so for almost all elements of $k\in A_r$ we have
$r\upharpoonright n\subseteq \phi(k)$. As $n\in\omega$ was arbitrary, it follows that $\lim_{k\in A_r}\phi(k)=r=\phi(A_r)$, as required for the continuity.
\end{proof}

\begin{remark} By transfinite induction one can construct a  family of sequences 
$(q^\alpha_n)_{n\in \N}$ for  $\alpha<\mathfrak c$  in such a way that 
no tree $T\subseteq 2^{<\omega}$ and no collection $\{B_t\mid t\in T\}$  satisfies the
hypothesis of Lemma \ref{tree-condition} for any family of $\wp(\N)$ obtained through a bijection
between $\N$ and $\Q$ from $\{\{q^\alpha_n\}_{n\in \N}\mid \alpha<\mathfrak c\}$.
 It follows that the  condition from Lemma \ref{tree-condition}
is not equivalent to the $\R$-embeddability. 
This way one can also conclude that  there are $\R$-embeddable almost disjoint 
families of subsets of $\N$
which are not equivalent to a family of branches of $2^{<\omega}$.
\end{remark}

\subsection{$\kappa$-controlled $\R$-embedding property}

Recall the definition of the $\kappa$-controlled $\R$-embedding property from the introduction.

\begin{theorem}\label{no-c-controlled} No almost disjoint family $\A$ of cardinality $\mathfrak c$ has
$\mathfrak c$-controlled $\R$-embedding property.
\end{theorem}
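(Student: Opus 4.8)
The plan is to prove the statement directly in contrapositive form: for an arbitrary almost disjoint family $\A$ of cardinality $\mathfrak{c}$, I will construct a single colouring $\phi:\A\to\R$ that defeats every candidate embedding, i.e. for which each $f:\N\to\R$ ``succeeds'' on fewer than $\mathfrak{c}$ members of $\A$. To make this precise, for $f:\N\to\R$ and $A\in\A$ set
$$V_f(A)=\{r\in\R: f[A]\ \hbox{is the range of a sequence converging to}\ r\},$$
and let $E_f=\{A\in\A:\phi(A)\in V_f(A)\}$. The target is a $\phi$ with $|E_f|<\mathfrak{c}$ for every $f$; once this holds, no $\B\in[\A]^{\mathfrak{c}}$ together with any $f$ can witness the $\mathfrak{c}$-controlled $\R$-embedding property, because any such $\B$ would be a subset of $E_f$ of size $\mathfrak{c}$.

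The construction rests on two observations. The first is a counting fact: since $|\R^\N|=\mathfrak{c}^{\aleph_0}=\mathfrak{c}$, there are exactly $\mathfrak{c}$ functions $f:\N\to\R$, so I may fix enumerations $\{f_\alpha:\alpha<\mathfrak{c}\}$ of them and $\A=\{A_\xi:\xi<\mathfrak{c}\}$. The second, more delicate, observation is that each $V_f(A)$ is \emph{finite}. Indeed, if $f[A]$ is infinite then it can be the range of a convergent sequence only if it has a unique accumulation point, which is then forced to be the limit, so $|V_f(A)|\le 1$; and if $f[A]$ is finite then $V_f(A)\subseteq f[A]$ is finite. This finiteness is exactly what makes a diagonalisation viable, since at each stage only finitely many reals need be forbidden per function.

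With these in hand I would build $\phi$ by transfinite recursion on $\xi<\mathfrak{c}$ using a tail-avoidance scheme. At stage $\xi$ put $F_\xi=\bigcup_{\alpha<\xi}V_{f_\alpha}(A_\xi)$; this is a union of $|\xi|$ finite sets, hence $|F_\xi|\le|\xi|\cdot\aleph_0<\mathfrak{c}$ because $\xi<\mathfrak{c}$ and $\mathfrak{c}>\aleph_0$. Since $|\R|=\mathfrak{c}$, I may choose $\phi(A_\xi)\in\R\setminus F_\xi$. The design then guarantees that for each fixed $\alpha$ and every $\xi>\alpha$ we have $\phi(A_\xi)\notin V_{f_\alpha}(A_\xi)$, so that $E_{f_\alpha}\subseteq\{A_\xi:\xi\le\alpha\}$ and therefore $|E_{f_\alpha}|<\mathfrak{c}$. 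As every $f:\N\to\R$ is some $f_\alpha$, this yields the required $\phi$ and finishes the proof.

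The main point to get right — and essentially the only place where something could go wrong — is the finiteness of $V_f(A)$ combined with the cardinal bookkeeping. One must check that the phrase ``$f[A]$ is the range of a sequence converging to $r$'' genuinely pins $r$ down to finitely many values, rather than to an interval's worth, which a careless reading of ``converging to'' might suggest; the point is that the relevant object is the \emph{set} $f[A]$, whose convergent enumerations force a unique accumulation point when it is infinite. One should also note that the argument uses no regularity assumption on $\mathfrak{c}$: the inequality $\xi<\mathfrak{c}$ already gives $|\xi|<\mathfrak{c}$, so both $|F_\xi|<\mathfrak{c}$ and $|E_{f_\alpha}|<\mathfrak{c}$ hold unconditionally. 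Everything else is routine.
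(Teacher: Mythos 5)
Your proof is correct and takes essentially the same approach as the paper: both arguments diagonalize the colouring $\phi$ against all $\mathfrak{c}$ many functions $f:\N\to\R$, using the two facts that $|\R^\N|=\mathfrak{c}$ and that a fixed pair $(f,A)$ admits only a very small set of possible limit values, so that $\phi(A)$ can be chosen to avoid every value achievable by the functions enumerated ``before'' $A$. The paper organizes the bookkeeping through a continuous increasing chain $(M_\alpha)_{\alpha<\mathfrak{c}}$ of sets of size $<\mathfrak{c}$ closed under taking limits $\lim_{n\in A}f(n)$, whereas you use explicit enumerations of $\A$ and $\R^\N$ together with the finiteness of $V_f(A)$ (which also handles, slightly more carefully than the paper, the literal ``range of a convergent sequence'' formulation of the property); the underlying combinatorics is identical.
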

\begin{proof} Let $\A$ be an almost disjoint family of size $\mathfrak c$
consisting of infinite sets.
Let $(M_\alpha)_{\alpha<\mathfrak c}$ be a well-ordered, continuous, increasing chain of sets satisfying
\begin{enumerate}
\item $|M_\alpha|\leq\max(|\alpha|, \omega)$ for each $\alpha<\mathfrak c$,
\item $\R^\N, \wp(\N)\subseteq\bigcup_{\alpha<\mathfrak c}M_\alpha$,
\item If $A\in M_\alpha\cap \wp(\N)$ and $f\in M_\alpha\cap\R^\N$ and 
 $\lim_{n\in A}f(n)$ exists, then it belongs to $M_{\alpha+1}$.
\end{enumerate}
It should be clear that one can construct such a sequence $(M_\alpha)_{\alpha<\mathfrak c}$.
Define $\phi:\A\rightarrow [0,1]$ so that $\phi(A)\in\R\setminus M_{\alpha(A)+1}$ for
$A\in \A$, where
$$\alpha(A)=\min\{\alpha<\mathfrak c\mid A\in M_\alpha\}.$$

This can be arranged by (2) and by (1). Now suppose $\A'\subseteq \A$ has cardinality 
$\mathfrak c$ and $f:\N\rightarrow \R$.
By (2) there is $\alpha_0<\mathfrak c$ such that $f\in M_{\alpha_0}$. Take $A\in \A'$ such that
$\alpha(A)\geq\alpha_0$ which exists by (1) as $\A'$ has cardinality $\mathfrak c$.
 Then $A\in M_{\alpha(A)}\cap \wp(\N)$ and $f\in M_{\alpha(A)}\cap[0,1]^\N$, so by (3), 
 if  $\lim_{n\in A}f(n)$ exists, then it belongs to $M_{\alpha(A)+1}$. But $\phi(A)\not\in M_{\alpha(A)+1}$
 by the definition of $\phi$, so $\lim_{n\in A}f(n)\not=\phi(A)$.
\end{proof}

However, it is quite possible to have almost disjoint families
of cardinality $\kappa$ with $\kappa$-controlled embedding property:

\begin{proposition}\cite[cf. 2.3.]{hernandez-hrusak}  
Let $\kappa$ be a cardinal. Assume {\sf MA}$_\kappa$.  Then
every  subfamily $\A$  of cardinality $\kappa$ of the Cantor family
 $\mathcal C=\{A_x\mid x\in 2^\omega\}\subseteq \wp(2^{<\omega})$, where 
 $A_x=\{x\upharpoonright n\mid n\in \omega\}$
  for $x\in 2^\omega$,
has the following strong version of  the $\kappa$-controlled embedding property:
For every function $\phi:\mathcal A\rightarrow [0,1]$ there is  a 
function $f: 2^{<\omega}\rightarrow [0,1]$ such that for all $A\in \A$ 
$$\lim_{s\in A}f(s)=\phi(F).$$
\end{proposition}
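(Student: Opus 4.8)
The plan is to realize $f$ as a generic object for a $\sigma$-centered partial order and apply {\sf MA}$_\kappa$. Identifying the underlying set $2^{<\omega}$ of $\mathcal C$ with $\N$ and using Lemma \ref{psi-correspondence}, the task is to build a \emph{total} $f\colon 2^{<\omega}\to[0,1]$ with $\lim_{n}f(x\upharpoonright n)=\phi(A_x)$ for every $A_x\in\A$. I would force with finite approximations to $f$, together with a finite list of the branches along which convergence is currently promised, each tagged by a level past which the promise is in force. Writing $\Delta(x,x')$ for the length of the longest common initial segment of $x,x'$, a condition is a triple $p=(h_p,\mathcal F_p,(m^p_A)_{A\in\mathcal F_p})$ with $h_p\colon\dom h_p\to\Q\cap[0,1]$ a finite partial function, $\mathcal F_p\in[\A]^{<\omega}$, and $m^p_A\in\omega$, subject to \emph{coherence} (for $A=A_x\in\mathcal F_p$ and $t\in A\cap\dom h_p$, if $|t|\ge m^p_A$ then $|h_p(t)-\phi(A)|<1/|t|$) and \emph{separation} (for distinct $A_x,A_{x'}\in\mathcal F_p$, $\max(m^p_{A_x},m^p_{A_{x'}})>\Delta(x,x')$). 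I order by $q\le p$ iff $h_q\supseteq h_p$, $\mathcal F_q\supseteq\mathcal F_p$, $m^q_A\ge m^p_A$ on $\mathcal F_p$, and the \emph{promise}: every new node $t\in A\cap(\dom h_q\setminus\dom h_p)$ with $A=A_x\in\mathcal F_p$ and $|t|\ge m^q_A$ satisfies $|h_q(t)-\phi(A)|<1/|t|$. Using rational values keeps the value space countable, and separation has the crucial consequence that \emph{each tree node lies above the level of at most one branch of $\mathcal F_p$}: if $t$ were above the levels of two branches $A_x,A_{x'}$ through it, then $\Delta(x,x')\ge|t|\ge\max(m^p_{A_x},m^p_{A_{x'}})$, contradicting separation. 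Thus every node has at most one ``owner'' among the currently controlled branches.

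The key forcing property is that $\PP$ is $\sigma$-centered (hence ccc). I would partition $\PP$ according to the finite rational function $h_p$, of which there are only countably many. Finitely many conditions sharing one $h$ are compatible: take the union of their branch-lists and, for each branch, a level raised above every pairwise $\Delta$ occurring in the union (and above the deepest node of that branch in $\dom h$ whose value is not already within tolerance). Raising levels is free here precisely because the common domain does not grow, so the promise clause is vacuous, while coherence only weakens (fewer nodes lie above a higher level) and separation is arranged by the raising itself. This is exactly the feature the level bookkeeping was introduced for.

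For the dense sets I would use, for each $A\in\A$, the set $D_A=\{p:A\in\mathcal F_p\}$, where a branch is always added with a level above its $\Delta$ with every incumbent branch (so the newcomer \emph{yields} the shared nodes to the incumbents and no incumbent level has to move); for each $t\in2^{<\omega}$, the set $G_t=\{p:t\in\dom h_p\}$, dense because, by the at-most-one-owner property, $h_p(t)$ can be chosen as a rational meeting the single relevant promise; and, for each $A=A_x$ and $N$, a set forcing all nodes $x\upharpoonright n$ with $m^p_A\le n\le N$ into the domain with value within $1/n$ of $\phi(A)$ (achievable by adding them with $A_x$ as owner after pushing the levels of any nearby far-valued branches above the relevant splits). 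This is $\kappa$ many dense sets, so {\sf MA}$_\kappa$ yields a filter $G$ meeting all of them, and $f=\bigcup_{p\in G}h_p$ is a total function on $2^{<\omega}$.

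Finally I would read off convergence. Fixing $A=A_x$, let $m_0$ be the least value $m^p_A$ takes over $p\in G$, attained at some $p_0\in G$; for $n\ge m_0$ the node $x\upharpoonright n$ is in $\dom f$, say at $r\le p_0$, and either coherence of $p_0$ or the promise for $r\le p_0$ (using $|x\upharpoonright n|\ge m_0$) forces $|f(x\upharpoonright n)-\phi(A)|<1/n$, whence $\lim_n f(x\upharpoonright n)=\phi(A)$. The step I expect to demand the most care — and the genuine heart of the matter — is exactly this convergence bookkeeping: one must organise the levels so that every branch owns all of its sufficiently deep nodes while the finitely many nodes it shares with any other controlled branch are relinquished, and one must check this is preserved by the generic filter (in particular that no controlled branch's level is driven up unboundedly). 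That the separation device can be maintained is where {\sf MA}$_\kappa$ is indispensable: in {\sf ZFC} such an $f$ need not exist, since for inseparable-type families the values demanded at tree nodes shared by many branches cannot be reconciled.
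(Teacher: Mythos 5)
You have a genuine gap, and it sits exactly where you predicted: the convergence step. In your ordering, both coherence and the promise are conditioned on the \emph{current} level: coherence of $r$ constrains only nodes $t\in A\cap\dom h_r$ with $|t|\ge m^r_A$, and the promise for $r\le p_0$ constrains only new nodes with $|t|\ge m^r_A$. Neither says anything about nodes $t$ with $m_0=m^{p_0}_A\le|t|<m^r_A$, yet your final paragraph invokes ``the promise for $r\le p_0$ (using $|x\upharpoonright n|\ge m_0$)'' --- that is, it silently replaces $m^r_A$ by $m_0$. This cannot be waved away, because raising a level is always a legitimate extension in your poset: it weakens coherence, strengthens separation, and makes the promise vacuous (the domain does not grow). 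Consequently nothing you required of the filter keeps $m_A$ from being driven up; in particular your third family of dense sets is met \emph{vacuously} by any condition with $m^p_A>N$. One can exploit this to produce an outright counterexample to your argument: take $\phi(A_{x_0})=0$, and for $n\in\omega$ let $B_n$ be the set of conditions $p$ such that $h_p(x_0\upharpoonright n')\ge 1/4$ for some $n'\ge n$. Each $B_n$ is dense --- given $p$, pick $n'$ larger than $n$, than all $|t|$ for $t\in\dom h_p$, and than all $\Delta(x_0,x')$ with $A_{x'}\in\mathcal F_p$; raise $m^p_{A_{x_0}}$ to $n'+1$ (if $A_{x_0}\in\mathcal F_p$), so that $x_0\upharpoonright n'$ has no owner, and give it the value $1/2$. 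So {\sf MA}$_\kappa$, applied to your dense sets together with the $B_n$, yields a filter meeting every set you listed whose $f$ satisfies $f(x_0\upharpoonright n')\ge 1/4$ for arbitrarily large $n'$, hence $\lim_n f(x_0\upharpoonright n)\ne\phi(A_{x_0})$.

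The defect is structural, not cosmetic: the two natural repairs both fail. Re-basing the promise on the old level $m^p_A$ destroys transitivity of $\le$ (an intermediate condition may raise the level and then extend freely below it, so the promise to $p$ is not inherited by further extensions), while freezing levels destroys $\sigma$-centredness and even ccc: under the separation clause the conditions $(\emptyset,\{A_x\},m_{A_x}=0)$, for $x$ ranging over branches sharing their first coordinate, become pairwise incompatible, an antichain of size continuum. This tension is precisely why the paper does not build $f$ as a single generic object. Its proof uses {\sf MA}$_\kappa$ only through the theorem that every $\kappa$-sized subset of $2^\omega$ is a Q-set; this makes every $\kappa$-sized subfamily of the Cantor family a Q-family, so $\Psi(\A)$ is normal, and since $\A$ is a closed discrete set in $\Psi(\A)$, the map $\phi$ is continuous on it and extends by the Tietze extension theorem to a continuous function on $\Psi(\A)$, whose restriction to $2^{<\omega}$ is the desired $f$. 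The forcing there only ever produces \emph{separations} (a task your separation clause shows finite conditions can handle while staying ccc); the analytic work of manufacturing a convergent real-valued function is done by Urysohn--Tietze, not by genericity.
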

\begin{proof} 
It is well known that under the above hypothesis all subsets
of $2^\omega$ of cardinality $\kappa$ are $Q$-sets and that it implies that
all subfamilies of the Cantor family of cardinality $\kappa$ 
can be separated from the rest of the family, i.e. they are $Q$-families in our terminology
from the introduction. It follows that
$\Psi(\A)$ is a normal topological space. As the nonisolated points of
$\Psi(\A)$ correspond to $\A$ and form a discrete closed subset of 
$\Psi(\A)$ any function $\phi$ on them is continuous and extends  by the Tietze extension theorem to a continuous
$\widetilde\phi: \Psi(\A)\rightarrow [0,1]$. So put $f=\widetilde\phi\upharpoonright 2^{<\omega}$
and use Lemma \ref{psi-correspondence} identifying $2^{<\omega}$ and $\N$.
\end{proof}

\section{A Luzin family with all possible separations in ZFC}

The main striking property of a Luzin family is that it is inseparable.
On the other hand,
there is also an almost disjoint  family $\mathcal A$  of size $\aleph_1$ such that every
 countable $\mathcal B\subseteq \mathcal A$ can be separated from $\mathcal A\setminus\mathcal B$ (see \cite{lambda}). 
Here we construct an almost disjoint family which 
satisfies both properties simultaneously. As both of these properties are hereditary with respect to
uncountable subfamilies this shows certain limitations to any further extraction principles.

To construct the almost disjoint family with the aboved-mentioned properties
we need colorings of pairs of countable ordinals with properties 
first obtained by S. Todorcevic  in \cite{todorcevic-acta} (cf. \cite{todorcevic-walks}).
In fact, the concrete construction we choose, due to Velleman (\cite{velleman}), is based
on a family of finite subsets of $\omega_1$. It was C. Morgan
who connected these two ideas (\cite{morgan}).
For functions $c:[\omega_1]^2\rightarrow \N$ we will abuse notation and
denote $c(\{\alpha, \beta\})$ by $c(\alpha, \beta)$.

\begin{theorem}\label{rho} There is a sequence $(g_\alpha\mid \alpha<\omega_1)\subseteq \{0,1,2\}^\N$
and a coloring $c:[\omega_1]^2\rightarrow\N$
satisfying the following:
\begin{enumerate}

\item For all $\beta<\alpha<\omega_1$ for all $k>c(\beta, \alpha)$
we have $\{g_\beta(k), g_\alpha(k)\}\not=\{1, 2\}$,
\item For all $\beta<\alpha<\omega_1$ 
we have $g_\beta(c(\beta, \alpha))=1$ and $g_\alpha(c(\beta, \alpha))\}=2$,
\item For all $\gamma<\beta<\alpha<\omega_1$ if  $c(\gamma, \beta)>c(\alpha, \beta)$, then 
$c(\gamma, \beta)=c(\gamma, \alpha)$,
\item For all $\alpha<\omega_1$ and all $m\in \N$  the set $\{\beta<\alpha\mid c(\beta, \alpha)<m\}$ is finite.
\item For all $\alpha<\omega_1$ the sets and $g_\alpha^{-1}[\{1\}]$ and $g_\alpha^{-1}[\{2\}]$
are infinite.
\end{enumerate}
\end{theorem}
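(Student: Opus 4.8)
The plan is to obtain $c$ from a coherent filtration of $\omega_1$ by finite sets and to read the sequence $(g_\alpha)$ off that filtration; the only substantial point will be property (1). I would first record that (1) and (2) together \emph{determine} $c$ from the $g_\alpha$'s: they assert precisely that
\[ c(\beta,\alpha)=\max\{k\in\N : \{g_\beta(k),g_\alpha(k)\}=\{1,2\}\}, \]
and that at this maximal ``opposition'' coordinate one has $g_\beta(k)=1$, $g_\alpha(k)=2$. So the construction breaks into producing a coloring satisfying (3),(4) and a $\{0,1,2\}$-sequence whose top-opposition coordinate with each earlier member exists, is oriented by the ordinal order, and equals the color, with both fibres $g_\alpha^{-1}(1)$, $g_\alpha^{-1}(2)$ infinite (5).

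For the coloring, I would build by recursion finite sets $F^\alpha_n$ ($n\in\N$), increasing in $n$ with $\bigcup_n F^\alpha_n=\alpha$, satisfying the strong coherence $F^\alpha_n\cap\beta=F^\beta_n$ whenever $\beta\in F^\alpha_n$ (this is where the finite-set machinery of Velleman and Morgan enters), and set $c(\beta,\alpha)=\min\{n:\beta\in F^\alpha_n\}$. Then (4) is immediate, since $\{\beta<\alpha:c(\beta,\alpha)<m\}=F^\alpha_{m-1}$ is finite; and (3) follows from coherence: if $\gamma<\beta<\alpha$ and $n:=c(\gamma,\beta)>c(\beta,\alpha)$, then $n,n-1\ge c(\beta,\alpha)$, so $F^\alpha_n\cap\beta=F^\beta_n$ and $F^\alpha_{n-1}\cap\beta=F^\beta_{n-1}$, whence $\gamma$ enters $F^\alpha$ at the same level $n$ at which it enters $F^\beta$, i.e. $c(\gamma,\alpha)=n$.

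Granting (2), the $g_\alpha$'s are forced: $g_\alpha$ must be $2$ on its growth levels $D_\alpha=\{c(\beta,\alpha):\beta<\alpha\}$ and $1$ on its admission levels $U_\alpha=\{c(\alpha,\delta):\delta>\alpha\}$, and I would put it $0$ elsewhere. To make this well defined and to secure (5) I would arrange in the recursion that every filtration stalls infinitely often and that each ordinal is always admitted into a larger one at a level where its \emph{own} filtration stalls; this forces $U_\beta\cap D_\beta=\emptyset$ (so $g_\beta$ is well defined) and makes $g_\alpha^{-1}(1)$ infinite, while $g_\alpha^{-1}(2)=D_\alpha$ is infinite for infinite $\alpha$ by (4), the finitely many finite $\alpha$ being set by hand.

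The hard part is (1). Writing $m=c(\beta,\alpha)$ and using $F^\alpha_n\cap\beta=F^\beta_n$ for $n\ge m$, the opposition $g_\beta(n)=2$, $g_\alpha(n)=1$ with $n>m$ is excluded at once: $g_\beta(n)=2$ means $F^\beta$, hence $F^\alpha\cap\beta$, hence $F^\alpha$, grows at $n$, contradicting $n\in U_\alpha$ (an admission level, where $F^\alpha$ stalls). The remaining case $g_\beta(n)=1$, $g_\alpha(n)=2$ is the crux and is genuinely \emph{not} a consequence of coherence alone: a generic subadditive coloring may well have $c(\beta,\gamma)=c(\gamma,\alpha)=n>c(\beta,\alpha)$ for some $\beta<\gamma<\alpha$, which is exactly a violation ($n\in U_\beta$ and $n\in D_\alpha$). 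Since $\beta$ stalls at such an $n$, the growth of $\alpha$ at $n$ must come through an element $\gamma\in(\beta,\alpha)$; ruling this out means forbidding that $\gamma$ be admitted into $\alpha$ at a level $n>c(\beta,\alpha)$ at which some $\beta<\gamma$ already lying below $\alpha$ is itself being admitted somewhere. I would enforce this by a further demand in the recursion — always admitting an ordinal at a level fresh with respect to the finitely many relevant earlier admissions — and the main obstacle is to show that these freshness demands can be met for all pairs simultaneously and, above all, survive the limit stages, where $g_\alpha$ must be glued along a cofinal sequence $\alpha_k\uparrow\alpha$ so that every pairwise opposition stays finite while (2)--(5) are preserved.
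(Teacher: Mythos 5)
Your reduction is sound as far as it goes: (1) and (2) do pin $c(\beta,\alpha)$ down as the largest opposition coordinate; a coherent system $F^\alpha_n$ with $F^\alpha_n\cap\beta=F^\beta_n$ for $\beta\in F^\alpha_n$ does exist in {\sf ZFC} and yields (3) and (4) exactly as you argue; and you have correctly located the crux in property (1), i.e.\ in coordinating the growth levels $D_\alpha$ with the admission levels $U_\beta$. But the proposal stops precisely there, and the missing step is not deferred bookkeeping --- it is the theorem. Your ``freshness'' demand is not a finite constraint: when some $\delta$ admits $\beta$ at level $n$, property (1) requires $n\notin D_\alpha$ for \emph{every} $\alpha>\beta$ with $c(\beta,\alpha)<n$, past or future, and there are in general infinitely many such $\alpha$, each with $D_\alpha$ infinite (the filtration of an infinite ordinal grows infinitely often). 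Worse, the theorem itself forces a massive alignment that no per-stage choice produces: for fixed $\beta$ there are uncountably many $\alpha>\beta$, so by pigeonhole some $n_0$ has uncountably many $\alpha$ with $c(\beta,\alpha)=n_0$; then (1) forces $U_\beta\cap(n_0,\infty)$ to avoid $\bigcup\{D_\alpha\mid c(\beta,\alpha)=n_0\}$, while (5) forces $U_\beta$ to be infinite --- so the growth sets of uncountably many ordinals must all fit, beyond $n_0$, inside a single co-infinite subset of $\N$. Nothing in a stage-by-stage recursion with ``fresh'' choices creates this global synchronization, and your own closing sentence concedes that the simultaneous satisfiability and the limit stages are unresolved.

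The paper supplies exactly this synchronization by importing a structure rather than running a recursion: an $(\omega,\omega_1)$-cardinal $\mu$ (a Velleman--Morgan style neat simplified morass, from the 2-cardinals paper). There $c(\beta,\alpha)$ is the least rank of an $X\in\mu$ containing both ordinals, and $g_\alpha(n+1)$ is read off the canonical splitting $X_1*X_2$ (both halves of rank $n$) of a rank-$(n+1)$ element containing $\alpha$: value $0$, $1$ or $2$ according as $\alpha\in X_1\cap X_2$, $X_1\setminus X_2$, $X_2\setminus X_1$; the coherent order-isomorphisms between elements of equal rank make this independent of the chosen element, which is the homogeneity your approach lacks. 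Property (1) then falls out structurally: if $\alpha,\beta$ lie in a common $X$ of rank $<k$, then $X$ sits wholly inside one half of the splitting of some rank-$k$ element, so $g_\alpha(k)$ and $g_\beta(k)$ cannot be opposed; (3) and (4) are quoted properties of $\mu$, and (5) comes from the associated Hausdorff gap. In short: what you call the ``main obstacle'' is where the entire content of the proof lives, and it is settled not by freshness demands in a recursion but by the global isomorphism structure of a morass-type object; without constructing or citing such a structure, your argument has a genuine gap at its central step.
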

\begin{proof} We choose the approach from Section 5 of \cite{2-cardinals}.
Thus our $c:[\omega_1]^2\rightarrow \N$ is $m$ of Definition 5.1. of \cite{2-cardinals}, i.e., $c(\alpha, \beta)$ is
the minimal rank of an element $X\in \mu$ such that $\alpha, \beta\in X$ where $\mu$ is
an $(\omega, \omega_1)$-cardinal. 

The functions $g_\alpha$ for $\alpha<\omega_1$ are defined as follows, for $n=0$ we put $g_\alpha(0)=0$
for any $\alpha<\omega_1$ and for any $n\in \N$ we put: 

\[g_\alpha(n+1)=\begin{cases}
0&\text{if } \exists X_1*X_2\in \mu\ rank(X_1)=rank(X_2)=n, \ \alpha\in X_1\cap X_2,\\
1&\text{if } \exists X_1*X_2\in \mu\ rank(X_1)=rank(X_2)=n, \ \alpha\in X_1\setminus X_2, \\
2&\text{if } \exists X_1*X_2\in \mu\ rank(X_1)=rank(X_2)=n, \ \alpha\in X_2\setminus X_1. \\
\end{cases}\]

Here $X_1*X_2$ is as in the definition 1.1. (5) of \cite{2-cardinals}.
 First let us argue that the  $g_\alpha$s are well defined.
By Definition 1.1. (6) and (7) of \cite{2-cardinals} each element $\alpha\in \omega_1$
is in an element of rank zero of $(\omega, \omega_1)$-cardinal $\mu$. Now by 
Velleman's Density Lemma 2.3.  of \cite{2-cardinals} it follows that
$\alpha$ is in an element of rank $n$ of $\mu$ for any $n\in \N$.
By Definition 1.1. (5) of \cite{2-cardinals} each element $X$ of $\mu$ 
of rank bigger than zero is of the form
$X_1*X_2$ which means in particular that
 $X=X_1\cup X_2$ and $X_1\cap X_2<X_1\setminus X_2<X_2\setminus X_1$.
Now suppose that $\alpha\in X=X_1*X_2$ and $\alpha\in Y= Y_1*Y_2$ and the ranks of $X_1, X_2, Y_1, Y_2$
are elements of $\mu$ of fixed rank $n\in \N$.  By Definition 1.1. (3) of \cite{2-cardinals}
there is an order preserving $f_{Y, X}: X\rightarrow Y$, which by By Definition 1.1. (3)  and (5) of
 \cite{2-cardinals} must satisfy $f[X_1]=Y_1$ and $f[X_2]=Y_2$ and moreover
$f\upharpoonright(X\cap(\alpha+1))$ is the identity on $X\cap(\alpha+1)$ be the coherence
 lemma 2.1 of \cite{2-cardinals}, so $f_{Y, X}(\alpha)=\alpha$ and $f[X_1\cap X_2]=Y_1\cap Y_2$,
$f[X_1\setminus X_2]=Y_1\setminus Y_2$ and $f[X_2\setminus X_1]=Y_2\setminus Y_1$ and so
the value of $g_\alpha(n+1)$ does not depend if we applied the definition of $g_\alpha(n+1)$
to $X_1*X_2$ or $Y_1*Y_2$ which completes the proof of the claim that the $g_\alpha$s are well defined.

Now we will  prove (1)  and (2) for $\alpha<\beta<\omega_1$
such that $c(\alpha, \beta)>0$.
For (1) let $n+1=k>rank(X)$ such that $\alpha, \beta\in X\in \mu$.  Let $Y$ (which exists by
the above-mentioned Density Lemma) be such that $X\subseteq Y\in \mu$ and $rank(Y)=k$. 
$Y=Y_1*Y_2$. By Definition 1.1. (5) of \cite{2-cardinals} we have that $X\subseteq Y_1$ or
$X\subseteq Y_2$, so $\{g_\beta(k), g_\alpha(k)\}\not=\{1, 2\}$.
(2) follows from the definition of $c$,  i.e., from the minimality of the rank of $X\ni\alpha, \beta$,
which is of the form $X_1\cup X_2$ with $X_1\setminus X_2< X_2\setminus X_1$ by By Definition 1.1. (5) of \cite{2-cardinals}
and by the hypothesis  that $c(\alpha, \beta)>0$.
Property (3) is Corollary 5.4 (2) of \cite{2-cardinals}. 
Property (4) is Proposition 5.3 (a) of \cite{2-cardinals}. 

To obtain property (5),
recall from \cite[Theorem 4.5]{2-cardinals}
that $(g^{-1}_\alpha[\{1\}], g^{-1}_\alpha[\{2\}])_{\alpha<\omega_1}$
is a Hausdorff gap, so the sets must be infinite from some point on, so it is enough
to remove possibly countably many $\alpha<\omega_1$ and renumerate the remaining ones.

So we are left with removing the hypothesis $c(\alpha,\beta)>0$ from (1) and (2).
Note that what we have  proved  so far is valid for $\alpha, \beta, \gamma$ from
any subset of $\omega_1$, in other words we can pass to an uncountable subset $X$ of $\omega_1$
and consider only $g_\alpha$s for $\alpha\in X$ and then re-enumerate $X$ as $\omega_1$ in an
increasing manner. So we need to argue that there is an uncountable $X\subseteq\omega_1$
such that $c(\alpha,\beta)>0$ for every $\alpha<\beta$ and $\alpha, \beta\in X$.
 To obtain $X$ apply the Dushnik-Miller theorem (Theorem 9.7 of \cite{jech})
 to a coloring $d:[\omega_1]^2\rightarrow \{0,1\}$ given by 
 $d(\alpha,\beta)=\min\{1, c(\alpha, \beta)\}$ knowing that 
all elements of rank zero must have fixed finite cardinality.

\end{proof}

\begin{theorem}\label{complicated-ad} There are families 
$(X_\alpha, Y_\alpha, A_\alpha, B_\alpha\mid \alpha<\omega_1)$ of  subsets of $\N$ 
such that 
\begin{enumerate}
\item $X_\alpha=A_\alpha\cup B_\alpha$ is infinite, $A_\alpha\cap B_\alpha=\emptyset$ for all $\alpha<\omega_1$,
\item $X_\beta\cap X_\alpha=^*\emptyset$ for all $\beta<\alpha<\omega_1$,
\item $Y_\beta\subseteq^* Y_\alpha$ for all $\beta<\alpha<\omega_1$,
\item $X_\beta\subseteq^* Y_\alpha$ for all $\beta<\alpha<\omega_1$,
\item $X_\alpha\cap Y_\alpha=\emptyset$ for all $\alpha<\omega_1$,
\item For every $\alpha<\omega_1$ and every $k\in \N$ for all
but finitely many $\beta<\alpha$ there is $l>k$ 
 such that 
$$l\in A_\beta\cap B_\alpha.$$

\end{enumerate}
\end{theorem}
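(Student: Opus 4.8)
The plan is to construct the four families simultaneously by transfinite recursion on $\alpha<\omega_1$, letting the gap functions $g_\alpha$ and the coloring $c$ from Theorem \ref{rho} dictate where the ``entangling'' points of the family sit. Concretely, I would arrange that for $\beta<\alpha$ the point witnessing (6) sits at (or is indexed by) the colour $c(\beta,\alpha)$, so that $c(\beta,\alpha)\in A_\beta\cap B_\alpha$. Then (6) is immediate from Theorem \ref{rho}(2) together with Theorem \ref{rho}(4): the latter says that for fixed $\alpha$ and fixed $k$ only finitely many $\beta<\alpha$ have $c(\beta,\alpha)\le k$, so all but finitely many of the witnesses lie above $k$. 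The role of the $g_\alpha$ (and of Theorem \ref{rho}(1)) is to force the $1$-side and the $2$-side to meet only finitely, which keeps each mixed intersection $A_\beta\cap B_\alpha$ finite and thus compatible with almost disjointness.

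The heart of the construction is the interplay between the tower $(Y_\alpha)$ and almost disjointness. At stage $\alpha$ I would first build $Y_\alpha$ as a \emph{co-infinite} set that almost contains every earlier $X_\beta$ and every earlier $Y_\beta$, i.e.\ satisfying (3) and (4); the Hausdorff gap provided by Theorem \ref{rho}(5) is exactly what guarantees that such an increasing tower can be kept co-infinite, leaving room at every countable stage for the next member of the family. Having fixed $Y_\alpha$ with $\N\setminus Y_\alpha$ infinite, I would then choose $X_\alpha\subseteq\N\setminus Y_\alpha$ infinite and split it as $A_\alpha\sqcup B_\alpha$. With this order of quantifiers, almost disjointness (2) comes essentially for free: since $X_\beta\subseteq^* Y_\alpha$ and $X_\alpha\cap Y_\alpha=\emptyset$, we get $X_\alpha\cap X_\beta\subseteq^* X_\alpha\cap Y_\alpha=\emptyset$ for every $\beta<\alpha$, while (5) holds by construction.

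It then remains to install the witnesses compatibly with the tower. For each $\beta<\alpha$ I would reserve a single point $l_\beta\in A_\beta$ lying outside $Y_\alpha$ (so that $l_\beta$ is one of the finitely many elements of $X_\beta\setminus Y_\alpha$) and place all these $l_\beta$ into $B_\alpha$; taking them with $l_\beta\to\infty$ as $\beta\to\alpha$, which is possible by Theorem \ref{rho}(4), secures (6). The delicate point is that these reserved points must also avoid every intermediate tower $Y_\gamma$ for $\gamma<\alpha$; otherwise the infinitely many witnesses of the pairs $(\beta,\alpha)$ with $\beta<\gamma$ would force $Y_\gamma\cap X_\alpha$ to be infinite and so destroy (3) together with (5). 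This is precisely where the coherence property Theorem \ref{rho}(3) enters: it lets the witness positions be chosen coherently along the tower, so that a point kept out of $Y_\alpha$ is automatically (mod finite) kept out of every $Y_\gamma$ with $\gamma\le\alpha$.

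I expect the main obstacle to be exactly this simultaneous reconciliation of (2), (3)--(5) and (6). Property (6) forces $X_\alpha$ to meet cofinally many earlier $X_\beta$ above every level, whereas (3) and (5) demand that each earlier tower $Y_\gamma$ almost contain those same $X_\beta$ yet remain almost disjoint from $X_\alpha$; the two requirements are compatible only because almost disjointness makes every single intersection $X_\beta\cap X_\alpha$ finite and the coherence of $c$ pins the witnesses into the finite ``error sets'' $X_\beta\setminus Y_\gamma$ in a uniform way. Keeping the tower co-infinite through all $\omega_1$ countable stages, so that there is always room for the next member, is the second point that requires care, and is handled by the gap structure of Theorem \ref{rho}(5). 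Once these are in place, verifying (1) is routine.
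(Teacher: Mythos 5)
Your proposal has a genuine gap, in fact two related ones. First, the witness-placement idea is not backed by a mechanism that preserves almost disjointness. If the witness for $(\beta,\alpha)$ is literally the integer $c(\beta,\alpha)$, then $A_\beta$ must contain all values $\{c(\beta,\alpha)\mid \alpha>\beta\}$ and $B_\alpha$ all values $\{c(\beta,\alpha)\mid\beta<\alpha\}$; but Theorem \ref{rho}(1)--(2) only control the \emph{mixed} intersections $A_\beta\cap B_\alpha$ and $B_\beta\cap A_\alpha$ (these sit inside $g_\beta^{-1}[\{1\}]\cap g_\alpha^{-1}[\{2\}]$, which is finite). Nothing in Theorem \ref{rho} prevents $A_\beta\cap A_\alpha$ or $B_\beta\cap B_\alpha$ --- coincidences of colour values along two different rows --- from being infinite, so condition (2) can fail. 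The paper's proof exists precisely to solve this: it replaces $\N$ by the countable set $[\{0,1,2\}^{<\omega}]^2$ and makes the witness for $(\beta,\alpha)$ the \emph{pair} $\{g_\beta\upharpoonright(c(\beta,\alpha)+1),\,g_\alpha\upharpoonright(c(\beta,\alpha)+1)\}$, padding $X_\alpha$ with pairs $\{g_\alpha\upharpoonright(n+1),s\}$ for arbitrary ``partners'' $s$; then any element common to $X_\beta$ and $X_\alpha$ is forced to be exactly such a pair of restrictions with $\{g_\beta(n),g_\alpha(n)\}=\{1,2\}$, hence $n\le c(\beta,\alpha)$, giving (2). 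Your proposal is missing this device entirely.

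Second, and more fundamentally, your construction is a transfinite recursion whose limit step is never carried out, and that step is the actual content of the theorem. At a limit $\alpha$, each witness for $(\beta,\alpha)$ must lie in $A_\beta\setminus Y_\alpha$, while (3) and (4) force the witness set to meet each earlier $Y_\gamma$ and $X_\gamma$ only finitely; since $A_\beta\subseteq^* Y_\gamma$ for $\gamma>\beta$, the admissible witnesses are (mod finitely many exceptions per $\gamma$) confined to the \emph{finite} sets $A_\beta\setminus Y_\gamma$ that were fixed at earlier stages, with no knowledge of $\alpha$. Your appeal to Theorem \ref{rho}(3) to resolve this is circular: in a recursion where the $Y_\gamma$ are chosen freely subject only to (1)--(6), the coherence of the colouring $c$ imposes no coherence whatsoever on the sets $Y_\gamma$; the statement ``a point kept out of $Y_\alpha$ is automatically kept out of every $Y_\gamma$ mod finite'' is just a restatement of conclusion (3), not a construction principle. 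To make Theorem \ref{rho}(3) do any work, one must \emph{define} $Y_\alpha$ explicitly from $c$ --- the paper sets $Y_\alpha=\bigcup_{\beta<\alpha}\bigl(X_\beta\setminus\bigcup_{i\le c(\beta,\alpha)}[\{0,1,2\}^{i+1}]^2\bigr)$, so that the coherence and finite-to-one properties of $c$ translate directly into (3), (4), (5) --- at which point there is no recursion and no choices left at all: the paper's proof is a single uniform definition followed by verification. As written, your argument either collapses into that explicit construction (which it does not reproduce, lacking the pair trick) or leaves the limit stage as an unproven claim of exactly the Hausdorff-gap type that the morass-derived colouring was introduced to circumvent. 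A smaller point: Theorem \ref{rho}(5) is used in the paper only to make $A_\alpha$ and $B_\alpha$ infinite (condition (1)); it plays no role in keeping the tower co-infinite.
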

\begin{proof}
Define all the sets as subsets of  $[\{0,1,2\}^{<\omega}]^2$ instead of $\N$.
For $\alpha<\omega_1$ put $X_\alpha=A_\alpha\cup B_\alpha$, where
$$A_\alpha=\{\{g_\alpha\upharpoonright(n+1), s\}\mid
 s\in \{0,1,2\}^{n+1}, \ g_\alpha(n)=1,  s(n)=2\}, \ n\in \N\}.$$
$$B_\alpha=\{\{g_\alpha\upharpoonright(n+1), s\}\mid
 s\in \{0,1,2\}^{n+1}, \ g_\alpha(n)=2, s(n)=1\}, \ n\in \N\}.$$
 So (1) is clear by Theorem \ref{rho} (5). 

  If $\beta<\alpha<\omega_1$ and
 $\{r,s\}\in X_\alpha\cap X_\beta$ and
    $g_\alpha\upharpoonright(n+1)\not=g_\beta\upharpoonright(n+1)$, then
  $\{r,s\}=\{g_\alpha\upharpoonright(n+1), g_\beta\upharpoonright(n+1)\}$
 and $\{r(n), s(n)\}=\{1,2\}$ which means that $n\leq c(\alpha, \beta)$ by
 (1) and (2) of Theorem  \ref{rho}. So we obtain (2).
 
 Note that if $\beta<\alpha<\omega_1$, then
 $\{g_\alpha\upharpoonright c(\alpha,\beta), g_\beta\upharpoonright c(\alpha,\beta)\}\in A_\beta\cap B_\alpha$ 
 by
 (1) of Theorem  \ref{rho}, so we obtain (6). 
 
For $\alpha<\omega_1$ define
$$Y_\alpha=\bigcup_{\beta<\alpha}\bigg(X_\beta\setminus
 \bigcup_{i\leq c(\beta,\alpha)} [\{0,1,2\}^{i+1}]^2\bigg).$$
 If follows that $X_\beta\subseteq Y_\alpha$ if $\beta<\alpha<\omega_1$, so we have (4).
 Also $Y_\alpha\cap X_\alpha=\emptyset$ holds because
 $X_\beta\cap X_\alpha\subseteq \bigcup_{i\leq c(\beta,\alpha)} [\{0,1,2\}^{i+1}]^2$
 by (1) and (2) of Theorem  \ref{rho}.
 
If $\gamma<\beta<\alpha$ we have $c(\gamma, \beta)=c(\gamma, \alpha)$ with the
possible exception for $\gamma<\beta$ in the  set $D(\beta, \alpha)=\{\delta<\beta\mid c(\delta, \beta)\leq c(\beta, \alpha)\}$
by  (3) of Theorem  \ref{rho}. $D(\beta, \alpha)$ is moreover  finite by (4) of Theorem  \ref{rho}.
So almost all summands in the definition of $Y_\beta$ appear literally in the definition
of $Y_\alpha$. The remaining summands of $Y_\beta$ are $X_\gamma\setminus
 \bigcup_{i\leq c(\gamma,\beta)} [\{0,1,2\}^{i}]^2$ for $\gamma\in D(\beta, \alpha)$.
 Each of them is almost equal to a summand of $Y_\alpha$ of the form
 $X_\gamma\setminus
 \bigcup_{i\leq c(\gamma,\alpha)} [\{0,1,2\}^{i}]^2$ for $\gamma\in D(\beta, \alpha)$
 which proves that $Y_\beta\subseteq^* Y_\alpha$ that is we have (3) which completes the proof 
 of the theorem.

\end{proof}

An example of the use of the partition of $X_\alpha$s above into
$A_\alpha$ and $B_\alpha$ is given in the following
proposition which has found an application in \cite{thin-tall}.

\begin{proposition}\label{complicated-ad2} There are families 
$(X_\alpha', Y_\alpha',  \alpha<\omega_1)$ of  subsets of $\N$ 
and bijections $f_\alpha:\N\times \N\rightarrow X_\alpha'$
such that 
\begin{enumerate}
\item $X_\beta'\cap X_\alpha'=^*\emptyset$ for all $\beta<\alpha<\omega_1$,
\item $Y_\beta'\subseteq^* Y_\alpha'$ for all $\beta<\alpha<\omega_1$,
\item $X_\beta'\subseteq^* Y_\alpha'$ for all $\beta<\alpha<\omega_1$,
\item $X_\alpha'\cap Y_\alpha'=\emptyset$ for all $\alpha<\omega_1$,
\item For every $\alpha<\omega_1$ and every $k\in \N$ for all
but finitely many $\beta<\alpha$ there are $m_1< ...<m_k$ and $n_1< ...< n_k$ 
such that 
$$f_\alpha(i,n_j)=f_\beta(j, m_i) $$
for all $1\leq i, j\leq k$.
\end{enumerate}
\end{proposition}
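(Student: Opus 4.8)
\emph{The plan is to} build $X_\alpha'$, $Y_\alpha'$ and the bijections $f_\alpha$ by \emph{fattening} the families produced by Theorem \ref{complicated-ad}. Let $(X_\alpha, Y_\alpha, A_\alpha, B_\alpha)_{\alpha<\omega_1}$ be as in that theorem, living on the countable set $W=[\{0,1,2\}^{<\omega}]^2$; for a point $l=\{s,t\}\in W$ write $c_l=|s|-1=|t|-1$ for its \emph{level}, and fatten $l$ to the finite grid $\{l\}\times\{0,\dots,c_l\}^2$. Working on the countable universe $W\times\N\times\N$, set $X_\alpha'=\bigcup_{l\in X_\alpha}\{l\}\times\{0,\dots,c_l\}^2$ and $Y_\alpha'=\bigcup_{l\in Y_\alpha}\{l\}\times\{0,\dots,c_l\}^2$. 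Since the fattening of a point depends only on the point and is finite, and since $X_\alpha\cap X_\beta$ and the symmetric differences controlled in Theorem \ref{complicated-ad} are finite, conditions (1)--(4) transfer at once from conditions (2)--(5) of Theorem \ref{complicated-ad}: $X_\beta'\cap X_\alpha'$ is a finite union of finite grids, and the almost-inclusions $Y_\beta'\subseteq^* Y_\alpha'$, $X_\beta'\subseteq^* Y_\alpha'$ and the disjointness $X_\alpha'\cap Y_\alpha'=\emptyset$ follow because $X'_\gamma,Y'_\gamma$ are unions of full fattenings of $X_\gamma,Y_\gamma$.

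The purpose of the $A_\alpha/B_\alpha$ split is to let each $\alpha$ read off, at every shared point, which of the two new coordinates serves as its \emph{row}. I would declare that $(l,a,b)\in X_\alpha'$ lies in row $a$ of $\alpha$ when $l\in B_\alpha$ and in row $b$ of $\alpha$ when $l\in A_\alpha$; as $A_\alpha,B_\alpha$ partition $X_\alpha$ this is unambiguous. Each row is infinite: by Theorem \ref{rho}(5) the sets $g_\alpha^{-1}[\{1\}]$ and $g_\alpha^{-1}[\{2\}]$ are infinite, so $A_\alpha$ and $B_\alpha$ contain points of arbitrarily high level, and hence row $i$ receives points from infinitely many $l$ with $c_l\ge i$. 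Enumerating each row by $\N$ then yields a bijection $f_\alpha\colon\N\times\N\to X_\alpha'$ whose first coordinate is the row.

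For condition (5) fix $\alpha$ and $k$. For all but finitely many $\beta<\alpha$, Theorem \ref{rho}(4) (equivalently condition (6) of Theorem \ref{complicated-ad} under a level-monotone identification $W\cong\N$) supplies a point $l\in A_\beta\cap B_\alpha$ of level $c_l\ge k$. Then $\{l\}\times\{0,\dots,c_l\}^2\subseteq X_\alpha'\cap X_\beta'$, and by the row convention each $(l,i,j)$ with $1\le i,j\le k$ lies in row $i$ of $\alpha$ (since $l\in B_\alpha$) and in row $j$ of $\beta$ (since $l\in A_\beta$). This is exactly the transposed $k\times k$ grid required: the common point of the $\alpha$-row $i$ and the $\beta$-row $j$ is $(l,i,j)$, and it remains to arrange the column enumerations so that its $\alpha$-column $n_j$ depends only on $j$, its $\beta$-column $m_i$ only on $i$, and both sequences can be taken strictly increasing.

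\emph{The hard part} is precisely this last requirement: producing the $f_\alpha$ so that, simultaneously for all $\beta$, the shared grid over each relevant $l$ is enumerated \emph{consistently across rows} — the column assigned to $(l,i,j)$ by $f_\alpha$ must be independent of $i$ for $i\le c_l$, and dually for $f_\beta$. Because a grid of level $c_l$ occupies only the rows $0,\dots,c_l$, the column slots it uses must be reused in higher rows by grids coming from other partners, so $f_\alpha$ cannot be a naive product enumeration and must be built by a bookkeeping recursion on $\alpha<\omega_1$: having fixed $f_\beta$ for $\beta<\alpha$, reserve for each relevant $l$ a block of column indices, place $(l,i,\cdot)$ into that block in every row $i\le c_l$, and fill the remaining slots by the still-unused points of $X_\alpha'$, checking at each stage that the assignment stays injective and exhausts $\N\times\N$. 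Verifying that this can always be completed — that the reserved blocks never clash and that every row is ultimately enumerated by all of $\N$ — is the technical heart of the argument, after which the combinatorial conditions (1)--(5) hold as indicated above.
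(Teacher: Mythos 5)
Your overall strategy -- fattening each point to a finite grid, using the $A_\alpha/B_\alpha$ split to decide which grid coordinate serves as the row, and deriving (5) from condition (6) of Theorem \ref{complicated-ad} -- is exactly the paper's, and your treatment of (1)--(4) is fine. But there is a genuine gap, and you flag it yourself: the bijections $f_\alpha$ are never constructed, only promised as ``the technical heart.'' Worse, with your fattening the construction you sketch cannot be completed. Suppose every grid $\{l\}\times\{0,\dots,c_l\}^2$ is placed as a \emph{full} block, i.e.\ it occupies the slots $\{0,\dots,c_l\}\times N_l$ for some column set $N_l$ (transposed or not). Since every grid occupies row $0$, injectivity forces the sets $N_l$ for distinct $l\in X_\alpha$ to be pairwise disjoint; but then a slot $(i,n)$ with $n\in N_l$ and $i>c_l$ lies in no block at all, so $f_\alpha$ cannot be onto $\N\times\N$. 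This is precisely the ``reuse in higher rows'' problem you mention, and inside a spare-free, all-grids-as-full-blocks scheme it has no solution: any other grid's block meeting column $n$ passes through row $0$ and clashes with grid $l$.

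The paper resolves this with a small but essential device: the fattened set is $I_l=\{l_{i,j}\mid 1\leq i,j\leq l\}\cup\{r_l\}$, a grid \emph{plus one spare point}. Each grid is block-placed in fresh columns by a recursion along $l\in X_\alpha$ only, for each fixed $\alpha$ separately, and then the countably many spares $\{r_l\mid l\in X_\alpha\}$ are matched bijectively with the countably many leftover slots, which is what makes $f_\alpha$ a genuine bijection. (In your setting the same effect is achieved by block-placing only the subgrid indexed by $\{1,\dots,c_l\}^2$ and using the border of each grid as spare points; the rule must depend only on $l$, not on $\alpha$, so that witnesses for (5) stay block-placed on both sides.) Note also that no bookkeeping recursion on $\alpha<\omega_1$, and no reference to previously built $f_\beta$, is needed: condition (5) only asserts the \emph{existence} of increasing sequences $m_1<\dots<m_k$ and $n_1<\dots<n_k$, so the columns chosen by $f_\alpha$ and by $f_\beta$ for a shared grid require no coordination whatsoever -- each places the grid as a block with respect to its own enumeration, and $f_\alpha(i,n_j)=l_{i,j}=f_\beta(j,m_i)$ falls out. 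Your plan to fix $f_\beta$ for all $\beta<\alpha$ before building $f_\alpha$ makes the construction look harder than it is while leaving the actual obstruction (surjectivity versus full-block placement) unaddressed.
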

\begin{proof}
Consider a  pairwise disjoint family $\{I_l\mid l\in \N\}$ of finite subsets $\N$
where $I_l=\{l_{i, j}\mid 1\leq i, j\leq l\}\cup\{r_l\}$. 
Define $X_\alpha'=\bigcup\{I_l\mid l\in X_\alpha\}$
and $Y_\alpha'=\bigcup\{I_l\mid l\in Y_\alpha\}$ where $X_\alpha, Y_\alpha$
  satisfy Theorem \ref{complicated-ad}. It is clear that (1) - (4) are satisfied.
  Put $X_\alpha''=\bigcup\{I_l\setminus\{r_l\}\mid l\in X_\alpha\}$.
Now for $\alpha<\omega_1$ let $A_\alpha$ and $B_\alpha$ be
as in  Theorem \ref{complicated-ad} and define 
recursively in $l\in X_\alpha$  for elements
of $I_l\setminus\{r_l\}$ an injection $h_\alpha: X_\alpha''\rightarrow \N\times \N$ 
in such a way that if $l\in A_\alpha$, then there are $m_1< ...<m_l$ such that
$h_\alpha(l_{i, j})=(j, m_i)$ for all $1\leq i, j\leq l$, and  
if $l\in B_\alpha$, then there are $n_1< ...<n_l$ such that
$h_\alpha(l_{i, j})=(i, n_j)$ for all $1\leq i, j\leq l$. Now
use the elements $\{r_l\mid l\in X_\alpha\}$ to extend $h_\alpha$ to
a bijection $h_\alpha': X_\alpha'\rightarrow \N\times \N$ and define $f_\alpha=(h_\alpha')^{-1}$.
Note that
(6) of Theorem \ref{complicated-ad} gives $l>k$ such that $l\in A_\beta\cap B_\alpha$,
and so (5) follows.
\end{proof}

 We may note several interesting properties
of the almost disjoint family $(X_\alpha\mid \alpha<\omega_1)$ from Theorem \ref{complicated-ad}.

\begin{corollary}\label{superlusin} There is an  almost disjoint family $\A$ which is inseparable (Luzin) but
for every countable $\B\subseteq\A$, the families $\B$ and $\A\setminus \B$
can be separated.
\end{corollary}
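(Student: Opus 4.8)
The plan is to read Corollary \ref{superlusin} directly off the structural properties of the family $(X_\alpha \mid \alpha<\omega_1)$ produced in Theorem \ref{complicated-ad}, taking $\A = \{X_\alpha \mid \alpha<\omega_1\}$. This is an almost disjoint family by property (2) of that theorem. I must verify two things: that $\A$ is inseparable (Luzin), and that every countable subfamily can be separated from its complement in $\A$.

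For the \emph{separation of countable parts}, I would argue directly from the $Y_\alpha$'s, which are designed to be the separating sets. Fix a countable $\B \subseteq \A$, say $\B = \{X_{\alpha_n} \mid n\in\omega\}$. Let $\gamma<\omega_1$ be a countable ordinal strictly above $\sup_n \alpha_n$ (if $\B$ is finite this is trivial, so assume it is countably infinite and take the supremum, then pass above it). The plan is to use $X = Y_\gamma$ as the separating set, but this is too crude because $Y_\gamma$ swallows \emph{all} of the $X_\beta$ with $\beta<\gamma$, not just those in $\B$. Instead I would separate $\B$ from $\A\setminus\B$ using the chain structure more carefully: since $\B$ is countable I can enumerate it and build a single set $X\subseteq\N$ as a suitable union. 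The key mechanism is property (4), $X_\beta \subseteq^* Y_\alpha$ for $\beta<\alpha$, together with property (5), $X_\alpha\cap Y_\alpha=\emptyset$: for each $X_{\alpha_n}\in\B$ and each $X_\delta\in\A\setminus\B$ with $\delta>\alpha_n$, the set $Y_\delta$ contains $X_{\alpha_n}$ mod finite while missing $X_\delta$ entirely, so the $Y$'s simultaneously absorb lower-indexed members and avoid their own index. I would realize the separation by choosing $X$ to be the union $\bigcup_n (X_{\alpha_n}\setminus F_n)$ for appropriately chosen finite sets $F_n$ ensuring the trace on each $X_\delta\in\A\setminus\B$ stays finite; here the almost disjointness (property (2)) handles every $X_\delta$ with $\delta$ larger than all relevant indices and the chain property handles the rest.

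For \emph{inseparability}, this is where I expect the real content, and property (6) is precisely the tool. Suppose toward a contradiction that two uncountable subfamilies $\mathcal{B}_0, \mathcal{B}_1\subseteq\A$ were separated by some $X\subseteq\N$, so that $X_\beta\subseteq^* X$ for all $X_\beta\in\mathcal{B}_0$ and $X_\alpha\cap X$ finite for all $X_\alpha\in\mathcal{B}_1$. Since both families are uncountable I can choose $X_\beta\in\mathcal{B}_0$ and $X_\alpha\in\mathcal{B}_1$ with $\beta<\alpha$, and in fact fix $X_\alpha\in\mathcal{B}_1$ and let $\beta$ range over the uncountably many smaller indices occurring in $\mathcal{B}_0$. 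Property (6) says that for \emph{every} $\alpha$ and every $k$, all but finitely many $\beta<\alpha$ admit an $l>k$ with $l\in A_\beta\cap B_\alpha \subseteq X_\beta\cap X_\alpha$. This forces $X_\beta\cap X_\alpha$ to be infinite for cofinitely many $\beta<\alpha$ in a controlled way: using the arbitrariness of $k$, I would produce infinitely many common elements of $X_\beta$ and $X_\alpha$ lying inside $X$ (because $X_\beta\subseteq^* X$) yet inside $X_\alpha$ (hence in $X_\alpha\cap X$), contradicting that $X_\alpha\cap X$ is finite. The main obstacle, and the step I would handle most carefully, is arranging the quantifier order so that a single fixed $X_\alpha\in\mathcal{B}_1$ meets infinitely many distinct elements of $X$, extracting the infinitely many common integers from the ``all but finitely many $\beta$, some $l>k$'' formulation of property (6) rather than from a single pair; this is the Luzin-type diagonalization that property (6) was engineered to support.

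Thus the corollary follows by assembling the separation argument from properties (2)--(5) and the inseparability argument from property (6), with the delicate point being the extraction of an infinite intersection trapped inside any candidate separating set, which is exactly what the combinatorics of Theorem \ref{rho} and Theorem \ref{complicated-ad} were built to guarantee.
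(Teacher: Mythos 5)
Your overall architecture (take $\A=\{X_\alpha\mid\alpha<\omega_1\}$, use properties (2)--(5) for the separations and property (6) for inseparability) is the paper's, but both halves of your argument have a gap at exactly the point where the real work is. In the separation half, the justification for your construction $X=\bigcup_n(X_{\alpha_n}\setminus F_n)$ fails where it matters most: almost disjointness does \emph{not} handle the $X_\delta$ with $\delta$ above all the $\alpha_n$. Each intersection $X_\delta\cap X_{\alpha_n}$ is finite, but the union over $n$ of these traces can perfectly well be infinite, and you must choose your countably many finite sets $F_n$ to work simultaneously against \emph{uncountably} many such $\delta$. If almost disjointness alone did this, then \emph{every} almost disjoint family would admit separation of countable subfamilies from the rest, which is false (the paper itself recalls Mr\'owka's families, where only finite subfamilies can be separated), and the corollary would be vacuous. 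The uniformity you need is exactly the device you dismissed as ``too crude'': fix $\gamma$ above all the $\alpha_n$ and require in addition that $X_{\alpha_n}\setminus F_n\subseteq Y_\gamma$ (possible by property (4)); then $X\subseteq Y_\gamma$, and for every $\delta\geq\gamma$ properties (3) and (5) give that $X_\delta\cap Y_\gamma$ is finite (since $Y_\gamma\subseteq^*Y_\delta$ and $X_\delta\cap Y_\delta=\emptyset$), so the entire uncountable tail is handled at once by the single set $Y_\gamma$. Its ``crudeness'' --- that it also absorbs the unwanted part of the initial fragment --- is then repaired by a second, purely countable requirement on the $F_n$: enumerate the $X_\delta$ with $\delta<\gamma$ and $X_\delta\notin\B$ as $\{Z_m\mid m\in\omega\}$ and demand $(X_{\alpha_n}\setminus F_n)\cap Z_m=\emptyset$ for all $m\leq n$. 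This is the paper's proof in substance: countable almost disjoint families can always be separated, so it suffices to separate the initial fragment $\{X_\beta\mid\beta<\gamma\}$ from the tail $\{X_\beta\mid\beta\geq\gamma\}$, and $Y_\gamma$ does precisely that.

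In the inseparability half you identify property (6) correctly and you even flag the quantifier problem, but your proposed resolution does not close it. From $l\in X_\beta\cap X_\alpha$ with $l$ large you infer $l\in X$ ``because $X_\beta\subseteq^*X$''; however, the finite exceptional set $X_\beta\setminus X$ depends on $\beta$, and your witnesses $l$ come from different $\beta$'s, so no single bound makes this inference valid. The standard Luzin argument, which the paper invokes, closes the gap by a pigeonhole step \emph{before} applying (6): since $\mathcal{B}_0$ and $\mathcal{B}_1$ are uncountable, there are a single $n\in\N$ and uncountable subfamilies $\mathcal{B}_0'\subseteq\mathcal{B}_0$, $\mathcal{B}_1'\subseteq\mathcal{B}_1$ on which $X_\beta\setminus X\subseteq n$ and $X_\alpha\cap X\subseteq n$ hold uniformly. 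Now pick $\alpha$ indexing a member of $\mathcal{B}_1'$ with infinitely many $\beta$ indexing members of $\mathcal{B}_0'$ below it; one application of (6) with $k=n$ yields some such $\beta$ and a single $l>n$ with $l\in X_\beta\cap X_\alpha$, whence $l\in X$ (as $l\in X_\beta$ and $l>n$) and $l\notin X$ (as $l\in X_\alpha$ and $l>n$) --- a contradiction obtained from one element, with no need to extract infinitely many. Without this uniformization step your argument as written does not go through.
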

\begin{proof} As countable almost disjoint families can be separated, it is enough to
separate the  initial fragment $\{X_\beta\mid \beta<\alpha\}$
from the remaining part $\{X_\beta\mid \beta\geq\alpha\}$.
Our family from  Theorem \ref{complicated-ad} of course has such separation $Y_\alpha$, so it is enough
to note that it is inseparable. For this note that Theorem \ref{complicated-ad} (5) implies that
given $\alpha<\omega_1$ and $k\in \N$ for all but finitely many $\beta<\alpha$ we have
$\max(X_\beta\cap X_\alpha)>k$. This condition implies the inseparability of
the family in the standard way as in the case of the Lusin family (cf. \cite{michael-ad}).
\end{proof}

\begin{corollary} There is a Luzin family $(X_\alpha\mid \alpha<\omega_1)$ such that
whenever $\mathcal X\subseteq\omega_1$ is uncountable, councountable, then there is a
a Hausdorff gap  $(A_\alpha^{\mathcal X}, B_\alpha^{\mathcal X})_{\alpha<\omega_1}$
for which  $((X_\alpha\mid \alpha\in \mathcal X), (X_\alpha\mid \alpha\in \omega_1\setminus \mathcal X))$
is its almost disjoint refinement.
\end{corollary}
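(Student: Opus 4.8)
The plan is to take the almost disjoint family $(X_\alpha\mid\alpha<\omega_1)$ produced in Theorem \ref{complicated-ad}, which is Luzin by Corollary \ref{superlusin}, and to show that any partition of the index set witnesses a Hausdorff gap refined by the two halves. Fix $\mathcal X\subseteq\omega_1$ with both $\mathcal X$ and $\omega_1\setminus\mathcal X$ uncountable (this is the case in which one can hope for a genuine $(\omega_1,\omega_1)$-gap; if the complement were countable one side of the gap would be missing). Enumerate $\mathcal X=\{\mu_\gamma\mid\gamma<\omega_1\}$ and $\omega_1\setminus\mathcal X=\{\nu_\gamma\mid\gamma<\omega_1\}$ increasingly, and write $\mathcal U=\{X_{\mu_\gamma}\mid\gamma<\omega_1\}$ and $\mathcal V=\{X_{\nu_\gamma}\mid\gamma<\omega_1\}$. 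These are the two families that should constitute the almost disjoint refinement of the two sides of the gap to be built.

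The clean half of the argument is unfillability. Suppose I have produced $\subseteq^*$-increasing towers $(A^{\mathcal X}_\gamma)$ and $(B^{\mathcal X}_\gamma)$ with $X_{\mu_\rho}\subseteq^* A^{\mathcal X}_{\rho+1}$, $X_{\nu_\sigma}\subseteq^* B^{\mathcal X}_{\sigma+1}$, and $A^{\mathcal X}_\gamma\cap B^{\mathcal X}_\delta$ finite for all $\gamma,\delta$. If some $C$ interpolated the gap, i.e. $A^{\mathcal X}_\gamma\subseteq^* C$ and $B^{\mathcal X}_\gamma\cap C=^*\emptyset$ for all $\gamma$, then $X_{\mu_\rho}\subseteq^* C$ and $X_{\nu_\sigma}\cap C=^*\emptyset$ for all $\rho,\sigma$, so $C$ would separate the uncountable subfamilies $\mathcal U$ and $\mathcal V$, contradicting inseparability. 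Thus any towers with the listed properties automatically form a genuine gap, and the whole problem is reduced to constructing them.

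I would build the towers by transfinite recursion mimicking the construction of the sets $Y_\alpha$ in Theorem \ref{complicated-ad}. There the point was that, for a fixed upper index $\alpha$, each earlier $X_\beta$ can be trimmed by the finite set $X_\beta\cap\bigcup_{i\le c(\beta,\alpha)}[\{0,1,2\}^{i+1}]^2$ so as to become disjoint from $X_\alpha$ while remaining almost equal to $X_\beta$; this is exactly what lets $Y_\alpha$ simultaneously $\subseteq^*$-contain all earlier $X_\beta$ and miss $X_\alpha$. For the gap I would use the same $c$-tuned trimming, together with a bookkeeping that incorporates each $X_{\mu_\rho}$ into the $A$-side and each $X_{\nu_\sigma}$ into the $B$-side only once it is safe, keeping at every countable stage $\gamma$ the finitely many currently active cross-intersections removed from the appropriate side. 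The coherence of the coloring, Theorem \ref{rho}(3), guarantees that a threshold chosen for a pair at an early stage stays correct later (so that the towers remain increasing and each trimming stays finite), while Theorem \ref{rho}(4) keeps the set of pairs that must be handled before a given level finite. Granting this, $\mathcal U$ refines the $A$-side and $\mathcal V$ the $B$-side by construction, and the Hausdorff condition — that for every $\gamma$ and every $n$ all but finitely many of the relevant cross-intersections reach above $n$ — is precisely the content of Theorem \ref{complicated-ad}(6): applied at the larger of the two indices it produces, for all but finitely many of the smaller ones, an element $l>n$ in $X_{\mu_\rho}\cap X_{\nu_\sigma}$.

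The main obstacle is exactly this construction, and the reason the naive attempt fails is instructive: one cannot make $A^{\mathcal X}_\gamma$ $\subseteq^*$-contain all earlier $X_{\mu_\rho}$ and at the same time be orthogonal to a freshly added $X_{\nu_\gamma}$, because $X_{\nu_\gamma}\cap\bigcup_{\rho}X_{\mu_\rho}$ is typically infinite (by Theorem \ref{rho}(4) the colors $c(\mu_\rho,\nu_\gamma)$ are unbounded), so removing it would be an infinite modification destroying either the refinement or the increasingness. The resolution is the delayed-incorporation bookkeeping described above, which only ever demands finitely many corrections at each level and defers the rest, so that every $X$-set is eventually absorbed while orthogonality is maintained level by level; the coherence properties of Theorem \ref{rho} are what make this deferral consistent. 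Once the gap is in hand, its identification with the Hausdorff gap of \cite{2-cardinals} type and with an almost disjoint refinement having $\mathcal U$ and $\mathcal V$ as its two sides is then routine.
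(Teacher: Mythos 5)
Your reduction and your unfillability argument are exactly right and agree with the paper: once one has $\subseteq^*$-increasing, mutually orthogonal towers that almost contain the $X_{\mu_\rho}$'s on one side and the $X_{\nu_\sigma}$'s on the other, any interpolant would separate two uncountable subfamilies of an inseparable family. But everything else --- the construction of the towers --- is the entire content of the corollary, and your proposal leaves it as an unexecuted plan (``Granting this\dots''). Moreover, your diagnosis of the difficulty is mistaken. You claim one cannot make $A^{\mathcal X}_\gamma$ almost contain all earlier $X_{\mu_\rho}$ while being orthogonal to a freshly added $X_{\nu_\gamma}$, because $X_{\nu_\gamma}\cap\bigcup_\rho X_{\mu_\rho}$ is infinite and removing it would be an ``infinite modification''. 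This is not an obstruction: the removed set meets each individual $X_{\mu_\rho}$ in a finite set, so after removing it each $X_{\mu_\rho}$ is still almost contained in $A^{\mathcal X}_\gamma$; $\subseteq^*$-containment tolerates discarded finite sets that grow with $\rho$. (Indeed, Corollary \ref{superlusin} already gives a single set almost containing every incorporated $X_{\mu_\rho}$ and almost disjoint from \emph{all} remaining members of the family at once.) The genuine difficulty is elsewhere: the choices must cohere across stages, so that $A^{\mathcal X}_\beta\subseteq^* A^{\mathcal X}_\alpha$, $B^{\mathcal X}_\beta\subseteq^* B^{\mathcal X}_\alpha$, and $A^{\mathcal X}_\alpha\cap B^{\mathcal X}_\alpha$ stays finite; at a limit stage this means producing one set that almost contains countably many previously built $A$-sets and pending $X_\mu$'s while staying almost disjoint from countably many $B$-sets and pending $X_\nu$'s. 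Finite $c$-tuned trimmings and properties (3),(4) of Theorem \ref{rho} do not produce such a set (those properties are already spent in building the $Y_\alpha$'s); what does produce it is the classical fact that two countable mutually orthogonal families can always be separated --- the nonexistence of countable gaps in $\wp(\N)/Fin$ --- which your sketch never invokes.

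This is exactly how the paper proceeds, and it also explains the role of the $Y_\alpha$'s, which you discard. One recursively picks $C^{\mathcal X}_\alpha$ separating $\{X_\beta\mid\beta\in\mathcal X\cap\alpha\}$ from $\{X_\beta\mid\beta\in\alpha\setminus\mathcal X\}$ subject to the coherence requirements $C^{\mathcal X}_\beta\cap Y_\beta\subseteq^* C^{\mathcal X}_\alpha$ and $(Y_\beta\setminus C^{\mathcal X}_\beta)\cap C^{\mathcal X}_\alpha=^*\emptyset$ for $\beta<\alpha$; all sets to be handled at stage $\alpha$ form two countable orthogonal families (orthogonality of the new $X_\beta$'s with the previously built material is free because that material lives inside the $Y$'s and $X_\beta\cap Y_{\beta'}$ is finite for $\beta'\le\beta$, by (3) and (5) of Theorem \ref{complicated-ad}), so the no-countable-gaps lemma applies at every stage, limits included. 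Setting $A^{\mathcal X}_\alpha=C^{\mathcal X}_\alpha\cap Y_\alpha$ and $B^{\mathcal X}_\alpha=Y_\alpha\setminus C^{\mathcal X}_\alpha$ then yields increasingness and orthogonality directly from the coherence conditions together with $Y_\beta\subseteq^* Y_\alpha$, with no further use of the coloring; unfillability follows from inseparability as you said. Without this (or an equivalent) mechanism your recursion cannot get past limit stages, so the proposal has a genuine gap at its central step.
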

\begin{proof} Take the families $(X_\alpha\mid \alpha<\omega_1)$ and 
 $(Y_\alpha\mid\alpha<\omega_1)$ from Theorem \ref{complicated-ad}.
Using the nonexistence
of countable
gaps in $\wp(\N)/Fin$ for each $\alpha<\omega_1$ we can  recursively
construct separation $C_\alpha^X$  
of $(X_\beta\mid \beta\in \mathcal X\cap\alpha)$
and $(X_\beta\mid \beta\in \alpha\setminus\mathcal  X)$ i.e., such $C_\alpha^X\subseteq \N$ that
\begin{itemize}
\item $X_\beta\subseteq^* C_\alpha^X$, if $\beta\in \alpha\cap \mathcal X$,
\item $X_\beta\cap C_\alpha^X=^*\emptyset$,
 if $\beta\in \alpha\setminus \mathcal X$. 
 \item $C_\beta^X\cap Y_\beta\subseteq^*C_\alpha^X$, if $\beta<\alpha$,
 \item $(Y_\beta\setminus C_\beta^X)\cap C_\alpha^X=^*\emptyset$, if $\beta<\alpha$.
 \end{itemize}
 
 Putting
$A_\alpha^X=C_\alpha^X\cap Y_\alpha$, $B_\alpha^X=Y_\alpha\setminus C_\alpha^X$
we obtain a Hausdorff gap.
\end{proof}

\section{ $\R$-embeddability  in the Cohen model}

The Cohen forcing $\C$ consists of elements of $\N^{<\omega}$ and is ordered by
reverse inclusion.
By the \emph{Cohen model }we mean the model obtained by adding
$\omega_{2}$-Cohen reals with finite supports to a model of the Generalized
Continuum Hypothesis (\textsf{GCH}). Given $X\subseteq\omega_{2}$ we define
$\mathbb{C}_{X}$ as the forcing adding Cohen reals (with finite supports)
indexed by $X.$ The following lemma is well known:

\begin{lemma}[Continuous reading of names for Cohen forcing] If $\dot{A}$ is a $\mathbb{C}%
$-name for a subset of $\N,$ then there is a pair $\left(  \left\langle
\mathcal{B}_{n}\right\rangle _{n\in\N},F\right)  $ such that

\begin{enumerate}
\item each $\mathcal{B}_{n}\subseteq\N^{<\omega}$ is a maximal antichain.

\item if $s\in\mathcal{B}_{n+1}$ then there is $t\in\mathcal{B}_{n}$ such that
$t\subseteq s.$

\item $F:%
{\textstyle\bigcup\limits_{n\in\N}}
\mathcal{B}_{n}\longrightarrow2.$

\item If $c\in\N^{\omega}$ is  Cohen over $V,$ then $$\dot{A}\left[
r\right]  =\left\{  n\mid\exists m\left(  \left(
r\upharpoonright m\right)  \in\mathcal{B}_{n}\ \& \ F\left(  c\upharpoonright
m\right)  =1\right)  \right\}  .$$
\end{enumerate}
\end{lemma}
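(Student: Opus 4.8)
The plan is to build the refining sequence of antichains by recursion on $n$, arranging that every member of $\mathcal{B}_n$ already decides the membership of $n$ in $\dot A$, and then to let $F$ record that decision. For each $n\in\N$ put
$$D_n=\{s\in\N^{<\omega}\mid s\Vdash \check n\in\dot A \ \text{or}\ s\Vdash \check n\notin\dot A\},$$
which is dense and open in $\mathbb{C}$ since, by the forcing theorem, every condition has an extension deciding $\check n\in\dot A$. I would start by choosing $\mathcal{B}_0$ to be any maximal antichain contained in $D_0$. Given $\mathcal{B}_n$, for each $t\in\mathcal{B}_n$ the set of \emph{proper} extensions of $t$ lying in $D_{n+1}$ is dense below $t$, so I can pick a maximal antichain $A_t$ below $t$ consisting of such proper extensions, and set $\mathcal{B}_{n+1}=\bigcup_{t\in\mathcal{B}_n}A_t$. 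Because distinct members of $\mathcal{B}_n$ are incompatible and below each $t$ we used a maximal antichain, $\mathcal{B}_{n+1}$ is again a maximal antichain, giving (1); and each of its members extends a (unique) member of $\mathcal{B}_n$, giving (2).

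The key bookkeeping point, and the one place that needs care, is that $F$ must be a single function on $\bigcup_n\mathcal{B}_n$, so I must know which antichain a given $s$ belongs to. Here the insistence on proper extensions pays off: if $s\in\mathcal{B}_{n'}$ with $n'>n$, then $s$ properly extends the member of $\mathcal{B}_n$ lying above it, so $s$ itself cannot belong to $\mathcal{B}_n$, as it would otherwise be a member of the antichain $\mathcal{B}_n$ properly extending another member, contradicting incompatibility. Hence the $\mathcal{B}_n$ are pairwise disjoint, and I may unambiguously define $F(s)=1$ if $s\Vdash\check n\in\dot A$ and $F(s)=0$ if $s\Vdash\check n\notin\dot A$, where $n$ is the unique index with $s\in\mathcal{B}_n$; this gives (3).

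Finally, for (4), let $c\in\N^\omega$ be Cohen over $V$ and let $G=\{c\upharpoonright m\mid m\in\N\}$ be the associated generic filter. Fix $n$. Since $\mathcal{B}_n\in V$ is a maximal antichain, $G$ meets it; and because any two initial segments of $c$ are comparable, there is exactly one $m$ with $c\upharpoonright m\in\mathcal{B}_n$. That condition decides $\check n\in\dot A$, so by the forcing theorem $n\in\dot A[c]$ holds precisely when $c\upharpoonright m\Vdash\check n\in\dot A$, i.e. precisely when $F(c\upharpoonright m)=1$. As this $m$ is the only index for which $c\upharpoonright m\in\mathcal{B}_n$, the displayed description of $\dot A[c]$ follows. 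The main obstacle here is purely organizational---guaranteeing that $F$ is well defined and that the generic selects a unique segment from each antichain---rather than any genuine forcing difficulty; density of $D_n$ and genericity of $c$ carry the rest.
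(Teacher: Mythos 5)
Your proof is correct: the recursion producing refining maximal antichains inside the dense sets $D_n$, the use of \emph{proper} extensions to make the $\mathcal{B}_n$ pairwise disjoint (so that $F$ is well defined), and the genericity/truth-lemma argument for (4) are all sound. The paper itself states this lemma without proof as ``well known,'' and your argument is exactly the standard one it has in mind, so there is nothing to compare beyond noting that your attention to the well-definedness of $F$ fills in the only point where care is genuinely needed.
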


Here by $\dot{A}\left[
c\right]$ we denote the evaluation of the name $\dot{A}$ using the generic real $c$. If the 
 conditions (1) - (4) hold, we will say that $\left(  \left\langle
\mathcal{B}_{n}\right\rangle _{n\in\N},F\right)  $ codes $\dot{A}.$

As a warm-up we present a direct proof of a result of Dow and Hart from \cite{dow-hart}
which was obtained there using an ingenious axiomatization of $\wp(\N)/Fin$ in the Cohen model.

\begin{theorem}[\cite{dow-hart}]\label{cohen-dow-hart} In the Cohen model, 
every almost disjoint family of
size $\omega_{2}$ is $\omega_{2}$-anti Lusin.
\end{theorem}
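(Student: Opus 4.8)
The plan is first to reduce the statement to a splitting principle. Since any subfamily $\mathcal B\subseteq\mathcal A$ of size $\omega_2$ is itself an almost disjoint family of size $\omega_2=\mathfrak c$, it suffices to prove that every almost disjoint family $\{A_\xi:\xi<\omega_2\}$ of size $\omega_2$ in the Cohen model contains two subfamilies of size $\omega_2$ that can be separated; applying this to $\mathcal B$ then yields the anti-Lusin conclusion. I would work in $V[G]$ with $G$ being $\mathbb C_{\omega_2}$-generic over a model $V\models\mathsf{GCH}$.

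First I would apply the continuous reading of names to each $A_\xi$, obtaining a countable support $a_\xi\subseteq\omega_2$ and a coding pair $(\langle\mathcal B^\xi_n\rangle_n,F_\xi)$, so that $A_\xi=\dot A_\xi[c_\xi]$ for the Cohen real $c_\xi$ read off the coordinates $a_\xi$. Since $V\models\mathsf{GCH}$ gives $\aleph_1^{\aleph_0}=\aleph_1<\aleph_2$, the $\Delta$-system lemma applies to $\{a_\xi:\xi<\omega_2\}$ and yields a set of size $\omega_2$ whose supports form a $\Delta$-system with countable root $r$; thinning further I would arrange that all the $a_\xi$ have the same order type over $r$. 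As there are only $\mathfrak c^{V}=\aleph_1$ many possible coding pairs, a final pigeonhole thinning produces $I\in[\omega_2]^{\omega_2}$ on which, after the canonical order-isomorphisms of supports fixing $r$, the coding pair is literally one and the same $(\langle\mathcal B_n\rangle_n,F)$. Writing $a_\xi=r\sqcup d_\xi$ with the $d_\xi$ pairwise disjoint, the root part $c\restriction r$ is common to all $\xi$, while the fresh parts $e_\xi:=c_\xi\restriction d_\xi$ are mutually Cohen-generic over the root model $W:=V[c\restriction r]$, and all $A_\xi=\dot A'[e_\xi]$ for a single $\mathbb C_{d_\xi}$-name $\dot A'$ (namely $\dot A$ with the root generic substituted).

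The heart of the argument is to locate the splitting inside this uniform picture. The distinct reals $\{e_\xi:\xi\in I\}$ are the branches of a tree with $\omega_2$ branches, so by regularity of $\omega_2$ there is a \emph{heavy splitting node}: a node $s^*$ through which $\omega_2$ branches pass and which has two immediate successors $s_0,s_1$ each still extended by $\omega_2$ many of the $e_\xi$. For such a node I would consider the possible-elements sets $P_{s_i}:=\{n:\exists e\supseteq s_i,\ n\in\dot A'[e]\}\in W$ and set $X:=P_{s_0}$, together with $\mathcal B_0:=\{A_\xi:\xi\in I,\ e_\xi\supseteq s_0\}$ and $\mathcal B_1:=\{A_\xi:\xi\in I,\ e_\xi\supseteq s_1\}$. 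By construction $A_\xi\subseteq P_{s_0}=X$ for every $A_\xi\in\mathcal B_0$ and $A_\xi\subseteq P_{s_1}$ for every $A_\xi\in\mathcal B_1$, so once I know $P_{s_0}\cap P_{s_1}$ is finite the set $X$ separates $\mathcal B_0$ from $\mathcal B_1$, both of which have size $\omega_2$ by heaviness, and the splitting principle follows.

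Thus the main obstacle is to guarantee that the heavy splitting node can be chosen with $P_{s_0}\cap P_{s_1}$ finite (equivalently $s_1\Vdash\dot A'\cap P_{s_0}=^*\emptyset$). Here I would use almost disjointness through a Fubini/mutual-genericity argument over $W$: if along every heavy splitting node the two heavy successors had infinite $P_{s_0}\cap P_{s_1}$, then extending two heavy stems alternately through common possible elements would build a mutually generic pair with infinite intersection, contradicting that the product forcing forces the family to be almost disjoint. The delicate point I expect to have to handle is precisely the interface between the combinatorial heaviness (needed to keep $\mathcal B_0,\mathcal B_1$ of size $\mathfrak c$) and the forcing statement (needed for the separation): one must transfer the almost disjointness of the actual family into a statement forced by a single condition of $\mathbb C_{d_\xi}\times\mathbb C_{d_\eta}$, and ensure the stems produced by the Fubini argument lie among the heavy ones. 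Arranging this compatibility, most plausibly by an initial thinning to a condition forcing almost disjointness together with a passage to a heavy subtree, is where the real work lies.
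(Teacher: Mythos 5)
Your preparatory work is correct and is essentially identical to the paper's proof: the reduction to finding two separated subfamilies of size $\omega_2$, the continuous reading of names, the $\Delta$-system and the \textsf{CH}-pigeonhole down to a single coding pair evaluated at mutually generic reals over the root model $W$ all match. The genuine gap is exactly where you locate it, but the resolution you sketch cannot be made to work, for two concrete reasons. (i) The hypothesis you would negate --- that $P_{s_0}\cap P_{s_1}$ is infinite at \emph{every heavy} splitting node --- is too weak to drive any recursion or density argument: a common possible element of $P_{s_0}$ and $P_{s_1}$ is witnessed by a pair of conditions $p\supseteq s_0$, $q\supseteq s_1$ which in general are extended by \emph{no} $e_\xi$ whatsoever, so after one step you are outside the heavy tree and your hypothesis no longer applies; heaviness is a property of the $\omega_2$ actual generics and is simply not inherited by the forcing-theoretic witnesses. (ii) Even ignoring this, a pair of reals built by hand in $\omega$ steps ("extending two heavy stems alternately") is never mutually generic over $W$: since $W\models\mathsf{CH}$ there are $\omega_1$ dense subsets of Cohen forcing in $W$ to meet, and no countable construction meets them all. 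An infinite intersection along such a non-generic pair contradicts nothing, because almost disjointness is only \emph{forced}, i.e.\ it only constrains generic evaluations of $\dot A'$. The only mutually generic pairs available to you are the actual $(e_\xi,e_\eta)$, and to exploit them you need a density statement below a fixed condition, not a pointwise construction.

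The repair is to reverse the order of your two steps, which is what the paper does. First find the pair of nodes without any reference to heaviness: either there is an incomparable pair $(s,t)$ of the same length such that no $m>|s|=|t|$ is forced into $\dot A'$ both by an extension of $s$ and by an extension of $t$, or else every incomparable pair can be extended to produce such a common element above any prescribed bound; in the second case the set of conditions in $\mathbb{C}_{d_\xi}\times\mathbb{C}_{d_\eta}$ forcing a common element $>N$ into $\dot A_\xi\cap\dot A_\eta$ is dense for every $N$, so mutual genericity of $(e_\xi,e_\eta)$ would make $A_\xi\cap A_\eta$ infinite, contradicting almost disjointness. Second, observe that heaviness then comes for free: for any \emph{fixed} ground-model node $s$, a trivial density argument in the finite-support product (given a condition, choose $\xi$ beyond any prescribed ordinal with $d_\xi$ disjoint from its finite support and copy $s$ into that block) shows that $\omega_2$ many of the $e_\xi$ extend $s$. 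Hence $\{A_\xi : e_\xi\supseteq s\}$ and $\{A_\xi : e_\xi\supseteq t\}$ both have size $\omega_2$, and your own computation with $X=P_s$ finishes the separation, provided you also note that the set $D_s$ of integers forced into $\dot A'$ by conditions \emph{below} $s$ is finite (otherwise any two generics through $s$ would share the infinite set $D_s$ --- the same almost-disjointness argument). In short, your heavy-splitting-node lemma, though correct, is superfluous --- in this situation every ground-model node is heavy --- and insisting on selecting the pair inside the heavy tree first is precisely what creates the gap you could not fill.
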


\begin{proof}
It is enough to show that in the Cohen model, every almost disjoint family of size
$\omega_{2}$ contains two subfamilies of size $\omega_{2}$ that are
separated. Let $\mathcal{A}=\{\dot{A}_{\alpha}\mid\alpha\in\omega_{2}\}$ be a
$\mathbb{C}_{\omega_2}$-name for an almost disjoint family. Since $\mathbb{C}_{\omega_2}$
 has the countable chain condition, for every $\alpha\in
\omega_{2},$ we can find $S_{\alpha}\in\left[  \omega_{2}\right]  ^{\omega}$
such that each $\dot{A}_{\alpha}$ is, in fact, a $\mathbb{C}_{S_{\alpha}}$-name.
By {\sf CH} and the $\Delta$-system lemma, (see \cite{kunen} Lemma III.6.15) we can find
$X\in\left[  \omega_{2}\right]  ^{\omega_{2}}$ such that $\left\{  S_{\alpha
}\mid\alpha\in X\right\}  $ forms a $\Delta$-system with root $R\in\left[
\omega_{2}\right]  ^{\omega}.$

\smallskip

We may further assume that the root $R$ is the empty set (if this is not the
case, we simply move to the intermediate model obtained by forcing with
$\mathbb{C}_{R}$). Since $\mathbb{C}_{S_{\alpha}}$ is a forcing notion equivalent to
$\mathbb{C},$ we may assume that for each $\alpha\in X,$ $\dot{A}_{\alpha}$ is
a $\mathbb{C}_{\left\{  \alpha\right\}  }$-name. Since $V$ is a model of {\sf CH}, 
we can find $X_{1}\in\left[  X\right]  ^{\omega_{2}}$
and a pair $\left(  \left\langle \mathcal{B}_{n}\right\rangle _{n\in\omega
},F\right)  $ that codes every $\dot{A}_{\alpha}.$ In other words, each
$\dot{A}_{\alpha}$ is forced to be equal to 
$$\left\{  n\mid\exists m\left(\left(  \dot{c}_{\alpha}\upharpoonright m\right)
  \in\mathcal{B}_{n}\ \& \
F\left(  \dot{c}_{\alpha}\upharpoonright m\right)  =1\right)  \right\}  $$
(where $\dot{c}_{\alpha}$ is the name of the $\alpha^{\text{th}}$-Cohen real). Since
$\mathcal{A}$ is forced to be an almost disjoint family, there are
$s,t\in\N^{<\omega}$ such that:
\begin{enumerate}
\item $s$ and $t$ are  incomparable nodes of the same length,
\item there are no $m,s^{\prime},t^{\prime}$ with the following properties:
\begin{enumerate}
\item $m>\left\vert s\right\vert, \left\vert t\right\vert .$

\item $s^{\prime},t^{\prime}\in\mathcal{B}_{m}.$

\item $s\subseteq s^{\prime},$ $t\subseteq t^{\prime}.$

\item $F\left(  s^{\prime}\right)  =F\left(  t^{\prime}\right)  =1.$
\end{enumerate}
\end{enumerate}

(In fact, every pair of incomparable nodes can be extended to a pair of
nodes satisfying these properties). In $V\left[  G\right]$, define
families $\mathcal{C}_{0}$ and $\mathcal{C}_{1}$ by 
$$\mathcal{C}%
_{0}=\{\dot{A}_{\alpha}\left[  c_{\alpha}\right]  \mid\alpha\in X_{1}\wedge
s\subseteq c_{\alpha}\}$$ and $$\mathcal{C}_{1}=\{\dot{A}_{\alpha}\left[
c_{\alpha}\right]  \mid\alpha\in X_{1}\wedge t\subseteq c_{\alpha}\}.$$ It is
easy to see that both families are of size $\omega_{2}$ and are separated by
$\bigcup \{A\setminus m\mid A\in \mathcal C_0\}$.
\end{proof}

\smallskip

A stronger statement: \textquotedblleft Every almost disjoint family of size
continuum contains an $\R$-embedabble subfamily of size
continuum\textquotedblright\ is consistent but it is false in the Cohen model.
We will prove the latter in the rest of this section and the former in the
next section.

By $\mathbb{T}$ we denote the set of all finite trees $T\subseteq
\N^{<\omega}$ such that all maximal nodes of $T$ have fixed the same height, we
denote this common value by $ht\left(  T\right)  .$ Given a tree
$T\subseteq\N^{<\omega}$ we define $\left[  T\right]  ^{2,=}=\{\left\{
s,t\right\}  \in\left[  T\right]  ^{2}\mid\left\vert s\right\vert =\left\vert
t\right\vert \}.$

\begin{definition}
Define $\mathbb{P}$ as the collection of all triples $p=\left(  T_{p}%
,R_{p},\phi_{p}\right)  $ that satisfy the following properties:

\begin{enumerate}
\item $T_{p}\in\mathbb{T}.$

\item $R_{p}$ $\subseteq\left[  T_{p}\right]  ^{2,=}.$

\item If $\left\{  s,t\right\}  \in R_{p}$ and $\left\{  s^{\prime},t^{\prime
}\right\}  \in\left[  T_{p}\right]  ^{2,=}$ is such that $s\subseteq
s^{\prime}$ and $t\subseteq t^{\prime}$ then $\left\{  s^{\prime},t^{\prime
}\right\}  \in R_{p}.$

\item $\phi_{p}:T_{p}\longrightarrow2.$

\item There is no $\left\{  s,t\right\}  \in R_{p}$ such that $\phi_{p}\left(
s\right)  =\phi_{p}\left(  t\right)  =1.$
\end{enumerate}

Given $p,q\in\mathbb{P}$ we say $p\leq_{\mathbb{P}}q$ if $T_{q}\subseteq
T_{p},$ $R_{q}=R_{p}\cap\left[  T_{q}\right]  ^{2,=},$ $\phi_{q}\subseteq \phi_{p}.$
\end{definition}

Since $\mathbb{P}$ is a countable partial order, it is a forcing notion equivalent to
the Cohen forcing. We define $\dot{\phi}_{gen}$ to be equal to $%
{\textstyle\bigcup}
\{\phi_{p}\mid p\in\dot{\GG}\}$ (where $\dot{\GG}$ is the name for a generic filter
of $\mathbb{P}$). It is easy to see that $\dot{\phi}_{gen}$ is forced to be a
function from $\N^{<\omega}$ to $2.$

\begin{definition}
We define $\UU$ as the set of all sequences $\left(  p,\left\langle
s_{\alpha}\right\rangle _{\alpha\in F}\right)  $ with the following properties:

\begin{enumerate}
\item $p\in\mathbb{P}.$

\item $F\in\left[  \omega_{2}\right]  ^{<\omega}.$

\item $s_{\alpha}\in T_{p}$ for every $\alpha\in F$ (where $p=\left(
T_{p},R_{p},\phi_{p}\right)  $).
\end{enumerate}

We define $\left(  p,\left\langle s_{\alpha}\right\rangle _{\alpha\in
F}\right)  \leq\left(  q,\left\langle t_{\alpha}\right\rangle _{\alpha\in
G}\right)  $ if the following conditions hold:

\begin{enumerate}
\item $p\leq_{\mathbb P} q.$

\item $G\subseteq F.$

\item $t_{\alpha}\subseteq s_{\alpha}$ for every $\alpha\in G.$
\end{enumerate}
\end{definition}

It is easy to see that $\mathbb{U}$ is forcing equivalent to $\mathbb{C}%
_{\omega_{2}}.$ Moreover, $\mathbb{U}$ is forcing equivalent to first forcing with
$\mathbb{P}$ and then adding $\omega_{2}$-Cohen reals. 
Given $\alpha<\omega_{2}$ we define $\dot{A}_{\alpha}$ to be
the set $\{n\mid\dot{\phi}_{gen}\left(  \dot{c}_{\alpha}\upharpoonright n\right)
=1\}$ (where $\dot{c}_{\alpha}$ is the name for the $\alpha$-th Cohen real). It
is easy to see that $\mathcal{\dot{A}}=\{\,\dot{A}_{\alpha}\mid\alpha
<\omega_{2}\}$ is forced to be an almost disjoint family of size $\omega_{2}.$

\begin{theorem}\label{cohen-main}
In the Cohen model, there is an almost disjoint family of size $\omega_{2}$ that
does not contain uncountable $\R$-embeddable subfamilies.
\end{theorem}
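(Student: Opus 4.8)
The plan is to show that the specific family $\dot{\mathcal A}=\{\dot A_\alpha\mid \alpha<\omega_2\}$ constructed above is a witness. So suppose toward a contradiction that in $V[G]$ some uncountable $\mathcal B=\{A_\alpha\mid \alpha\in I\}$ (with $I\in[\omega_2]^{\omega_1}$) is $\R$-embeddable. By Lemma \ref{equivalences}(6) together with Lemma \ref{psi-correspondence}, I may fix a function $f:\N\to 2^\omega$ such that $x_\alpha=\lim_{n\in A_\alpha}f(n)$ exists in $2^\omega$ for every $\alpha\in I$ and the $x_\alpha$ are pairwise distinct. Since $\mathbb U$ is forcing equivalent to $\mathbb P$ followed by the finite-support product adding $\omega_2$ Cohen reals, and since $f$ and $\phi_{gen}$ are reals, there is a countable $c\subseteq\omega_2$ with $f,\phi_{gen}\in N:=V[\phi_{gen}][\langle c_\gamma\mid \gamma\in c\rangle]$; for every $\alpha\notin c$ the real $c_\alpha$ is then Cohen over $N$. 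As $I\setminus c$ is uncountable, I may pick any $\alpha\in I\setminus c$ and work with $c_\alpha$ Cohen over $N$. Write $y_n=f(n)\in 2^\omega$, a fixed sequence lying in $N$; recall $A_\alpha=\{n\mid \phi_{gen}(c_\alpha\restriction n)=1\}$, so $A_\alpha$ depends only on $\phi_{gen}$ and the branch $c_\alpha$, and $f(n)$ depends only on the level $n$.

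The hard part, and the key combinatorial input, is the following density property of the $\mathbb P$-generic function, which I would prove first:
\[
(\ast)\qquad \forall\,\sigma\in\N^{<\omega}\ \forall\, n>|\sigma|\ \exists\,\sigma'\supseteq\sigma\ \big(|\sigma'|=n\ \wedge\ \phi_{gen}(\sigma')=1\big).
\]
To establish $(\ast)$ I would argue by genericity of $\phi_{gen}$ over $\mathbb P$: given $p=(T_p,R_p,\phi_p)$ with $\sigma\in T_p$ and a target level $n$, extend $\sigma$ by appending coordinates that are strictly larger than every integer occurring in $T_p$, obtaining a node $\sigma'$ of length $n$ whose new part is \emph{fresh}. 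Completing $T_p\cup\{\text{initial segments of }\sigma'\}$ to a tree in $\mathbb T$ of common height $\max(n,ht(T_p))$, I set $\phi(\sigma')=1$. The only way $\sigma'$ could be forced into $R$ by the upward-closure condition (3) is via a pair $\{s,t\}\in R_p$ with $s\subseteq\sigma'$; but any such $s\in T_p$ must lie inside $\sigma$ (the fresh coordinates are new), so its forced partners $t'\supseteq t$ are themselves extensions on which I am free to set $\phi=0$. Thus clause (5) is respected, the extension is a legitimate condition below $p$, and $(\ast)$ holds densely, hence in $V[\phi_{gen}]\subseteq N$. (This same density also shows each $A_\alpha$ is infinite.)

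With $(\ast)$ in hand the contradiction is clean. If the fixed sequence $(y_n)_{n}$ converges in $2^\omega$, then for \emph{every} infinite $A_\alpha$ we get $\lim_{n\in A_\alpha}f(n)=\lim_n y_n$, so all the $x_\alpha$ coincide, contradicting that they are distinct on the uncountable $\mathcal B$. Otherwise $(y_n)_n$ has two cluster points, so I may choose $k\in\N$ and two distinct prefixes $a,b\in 2^k$ each equal to $y_n\restriction k$ for infinitely many $n$. I then claim that over $N$ it is dense for $c_\alpha$ to realize, above any bound $m$, both an $a$-level and a $b$-level inside $A_\alpha$: given a Cohen condition $\sigma$, pick $n\ge\max(m,|\sigma|)$ with $y_n\restriction k=a$ and apply $(\ast)$ to $\sigma$ and $n$ to obtain $\sigma'\le\sigma$ with $\phi_{gen}(\sigma')=1$, which forces $n\in A_\alpha$ and $y_n\restriction k=a$; symmetrically for $b$. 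Meeting these dense sets, the Cohen-generic $c_\alpha$ forces $A_\alpha$ to contain cofinally many levels with $y_n\restriction k=a$ and cofinally many with $y_n\restriction k=b$, so $f\restriction A_\alpha$ has two distinct cluster points and $\lim_{n\in A_\alpha}f(n)$ does not exist. This contradicts the choice of $f$, completing the argument. The main obstacle is genuinely the verification of $(\ast)$, i.e.\ that the $R$-constraints of $\mathbb P$ never obstruct placing a $1$ on a fresh branch at a prescribed level; everything afterward is a standard Cohen-genericity diagonalization.
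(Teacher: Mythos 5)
Your reduction of $\mathbb{U}$ to ``$\mathbb{P}$ followed by a finite-support Cohen product'', the choice of $N$, and the convergent-case argument are all fine; the fatal problem is that your key lemma $(\ast)$ is \emph{false}, and the whole diagonalization rests on it. Here is a counterexample. Let $p=(T_p,R_p,\phi_p)$ with $T_p=\{\emptyset,\langle 0\rangle,\langle 1\rangle,\langle 0,0\rangle,\langle 1,0\rangle\}$, $R_p=\{\{\langle 0\rangle,\langle 1\rangle\},\{\langle 0,0\rangle,\langle 1,0\rangle\}\}$, and $\phi_p$ equal to $1$ exactly on $\langle 1\rangle$ and $\langle 1,0\rangle$; one checks clauses (1)--(5), so $p\in\mathbb{P}$. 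Now take any $q\leq p$ and any $\sigma'\supseteq\langle 0\rangle$ with $|\sigma'|=2$ and $\sigma'\in T_q$. Since $R_p\subseteq R_q$, clause (3) forces $\{\sigma',\langle 1,0\rangle\}\in R_q$, and since $\phi_q(\langle 1,0\rangle)=1$, clause (5) gives $\phi_q(\sigma')=0$. Thus below $p$ \emph{no} extension of $\langle 0\rangle$ at level $2$ can ever receive value $1$, so $(\ast)$ fails at $(\langle 0\rangle,2)$ whenever $p$ lies in the generic filter; moreover such killing configurations can be grafted with fresh coordinates onto an arbitrary condition, so densely many conditions force a failure of $(\ast)$ somewhere, and generically $(\ast)$ fails at infinitely many pairs $(\sigma,n)$. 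The precise flaw in your density argument is the sentence claiming the forced partners $t'\supseteq t$ are ``extensions on which I am free to set $\phi=0$'': when the target level $n$ is at most $ht(T_p)$, those partners may be \emph{old} nodes of $T_p$ already carrying value $1$, and then clause (5) forces $\phi(\sigma')=0$. What survives is only the weaker statement that for every $\sigma$ and every $m$ there is \emph{some} $n>m$ (any $n$ beyond the height of a condition containing $\sigma$) admitting a value-$1$ extension of $\sigma$ at level $n$.

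This weaker statement does not repair the proof, because in your final step you must realize levels $n$ lying in the prescribed set $\{n\mid y_n\upharpoonright k=a\}$, and that set is controlled by $f$, which lives in $N\supseteq V[\phi_{gen}]$ and hence may be defined with full knowledge of the sets $L_\sigma=\{n\mid\exists\sigma'\supseteq\sigma\ (|\sigma'|=n\wedge\phi_{gen}(\sigma')=1)\}$; nothing excludes that $L_\sigma\cap\{n\mid y_n\upharpoonright k=a\}$ is finite for the nodes $\sigma$ that matter. This is not a technicality: the $R$-component of $\mathbb{P}$ exists precisely to destroy the uniform density of $1$'s that $(\ast)$ asserts --- that is exactly what makes $\dot{\mathcal{A}}$ almost disjoint and non-embeddable --- so no genericity statement about $\phi_{gen}$ alone, quantified over all levels, can drive the argument. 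The paper's proof circumvents this by never arguing from a single branch: after a two-round $\Delta$-system it uses the injectivity of $\dot f$ on the subfamily to produce incompatible $t,z$, then uses an elementary submodel to manufacture levels $l$ with $t\subseteq\dot f(l)$ as elements of \emph{other} members $\dot A_{\beta_{\xi_1}}$ of the family (via the conditions $r'_\alpha\in\mathcal{M}$), and finally copies those levels into $\dot A_{\beta_\delta}$ by arranging that the relevant nodes of the $\delta$-branch avoid $R$-pairs with the $\xi_1$-branch. Your single-branch diagonalization has no source for such levels, so the argument as written does not go through, and repairing it would essentially amount to reintroducing the paper's multi-branch mechanism.
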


\begin{proof}
Since $\mathbb{U}$ is forcing equivalent to $\mathbb{C}_{\omega_{2}},$ we can
think of the Cohen model as the model obtained after forcing with $\mathbb{U}$
over a model of the Continuum Hypothesis.  Let $\mathcal{A}$ be the almost disjoint family that was  defined above. We
argue by contradiction, so assume that there is $\mathcal{\dot{B}=}\{\dot
{A}_{\dot{\alpha}_{\xi}}\mid\xi\in\omega_{1}\}$ and $\dot{f}$ such that
$\dot{f}$ is forced to be an embedding $\Psi(\mathcal{\dot{B})}$ into $2^{\omega}$
as in Lemma \ref{equivalences} (6). For every
$\xi\in\omega_{1},$ we may find $r_{\xi}=(p_{\xi},\langle s_{\eta}^{r\xi}\rangle_{\eta\in F_{\xi}})$$\in\mathbb{U}$ and
$\beta_{\xi}$ with the following properties:

\begin{enumerate}
\item $r_{\xi}\Vdash\dot{\alpha}_{\xi}=\beta_{\xi}.$

\item $\beta_{\xi}\in F_{\xi}.$

\item $s_{\beta_{\xi}}^{r_{\xi}}$ $^{\frown}0,$ $s_{\beta_{\xi}}^{r_{\xi}}$
$^{\frown}1\in T_{p_{\xi}}$ (where $p_{\xi}=(T_{p_{\xi}},R_{p_{\xi}},\phi_{p_{\xi}})$).
\end{enumerate}

By the $\Delta$-system lemma, (see \cite{kunen} Lemma III.2.6) we may find
$p\in\mathbb{P},$ $R\in\left[  \omega_{2}\right]  ^{<\omega},$ $W\in\left[
\omega_{2}\right]  ^{\omega_{1}}$ and $s\in\N^{<\omega}$ with the
following properties:

\begin{enumerate}
\item $p_{\xi}=p$ for every $\xi\in W.$

\item $\left\{  F_{\xi}\mid\xi\in W\right\}  $ forms a $\Delta$-system with root
$R.$

\item $s_{\eta}^{r_{\xi}}=s_{\eta}^{r_{\xi^{\prime}}}$
 for every
$\xi,\xi^{\prime}\in W$ and $\eta\in R.$

\item $s=s_{\beta_{\xi}}^{r_{\xi}}$ for every $\xi\in W.$

\end{enumerate}

It is easy to see that $\left\{  r_{\xi}\mid\xi\in W\right\}  \subseteq
\mathbb{U}$ is a centered set. Let $\left\{  H_{\alpha}\mid\alpha\in\omega
_{1}\right\}  \subseteq\left[  W\right]  ^{2}$ be a pairwise disjoint family.
For every $\alpha\in\omega_{1}$ we find $r_{\alpha}^{\prime}=(p_{\alpha}^{\prime},\langle u_{\eta}^{r_{\alpha}%
^{\prime}}\rangle_{\eta\in F_{\alpha}^{\prime}})$$\in\mathbb{U},$ $t_{\alpha}$ and
$z_{\alpha}$ with the following properties:

\begin{enumerate}
\item $r_{\alpha}^{\prime}\leq r_{\xi_{1}},r_{\xi_{2}}$ where $H_{\alpha
}=\left\{  \xi_{1},\xi_{2}\right\}  $ and $\xi_{1}<\xi_{2}.$

\item $s^{\frown}0\subseteq u_{\beta_{\xi_{1}}}^{r_{\alpha}^{\prime}}.$

\item $s^{\frown}1\subseteq u_{\beta_{\xi_{2}}}^{r_{\alpha}^{\prime}}.$

\item $t_{\alpha},$ $z_{\alpha}\in\N^{<\omega}$ are incompatible.

\item $r_{\alpha}^{\prime}\Vdash t_{\alpha}\subseteq\dot{f}(\dot{A}_{\beta_{\xi_{1}%
}})\wedge z_{\alpha}\subseteq\dot{f}(\dot{A}_{\beta_{\xi_{2}}}).$

\end{enumerate}

The last condition (5) can be obtained  since $\dot f$ is forced to be injective
when restricted to $\dot{\mathcal B}$ as in Lemma \ref{equivalences} (6).
Once again, we can find $W_{0}\in\left[  \omega_{2}\right]  ^{\omega_{1}},$
$p^{\prime}\in\mathbb{P},$ $s_{0},s_{1}\in\N^{<\omega},$ $R^{\prime\prime
}\in\left[  \omega_{2}\right]  ^{<\omega},t,z$ such that for every $\alpha\in
W_{0}$ the following holds:

\begin{enumerate}
\item $p_{\alpha}^{\prime}=p^{\prime}.$

\item $t_{\alpha}=t$ and $z_{\alpha}=z$.

\item $\left\{  F_{\alpha}^{\prime}\mid\alpha\in W_{0}\right\}  $ forms a
$\Delta$-system with root $R^{\prime}.$

\item $u_{\eta}^{r_{\alpha}^{\prime}}=u_{\eta}^{r_{\delta}^{\prime}}$
 for every $\alpha,\delta\in W_{0}$ and $\eta\in
R^{\prime}.$

\item $s_{0}=u_{\beta_{\xi_{1}}}^{r_{\alpha}^{\prime}}$ and $s_{1}=u_{\beta_{\xi
_{2}}}^{r_{\alpha}^{\prime}}$ 
 for every
$\alpha\in W_{0}$ where $H_{\alpha}=\left\{  \xi_{1},\xi_{2}\right\}  .$

\end{enumerate}

Once again, the set $\left\{  r_{\alpha}^{\prime}\mid\alpha\in W_{0}\right\}
\subseteq\mathbb{U}$ is centered. Let $\mathcal M$ be a countable elementary submodel
of some \textsf{H}$\left(  \kappa\right)  $ (where $\kappa$ is a sufficiently
big cardinal) containing all objects that have been defined so far. Let $\gamma\in
\mathcal M\cap W_{0}$ and $\delta\in W\setminus \mathcal M.$ Find $m\in\N$ such that
$s^{\frown}m\notin T_{p^{\prime}},$ let $\widehat{s}$ be a sequence extending $s^{\frown}m$
such that $\left\vert \widehat{s}\right\vert =\left\vert s_{0}\right\vert
=\left\vert s_{1}\right\vert$. Then we find $\widehat{r}=(\widehat{p},\langle y_{\eta}^{\widehat{r}}\rangle_{\eta\in F})$
 with the following properties:

\begin{enumerate}
\item $\widehat{r}\leq r_{\gamma}^{\prime},r_{\delta}.$

\item $T_{p^{\prime}}\cup\left\{  \widehat{s}\right\}  \subseteq
T_{\widehat{p}}$ (where $\widehat{p}=(T_{\widehat{p}},R_{\widehat{p}},F_{\widehat{p}})$).

\item $F=F_{\gamma}^{\prime}\cup F_{\delta}.$

\item $y_{\beta_{\delta}}^{\widehat{r}}=\widehat{s}.$

\item $\left\{  s_{0},\widehat{s}\right\}  ,$ $\left\{  s_{1},\widehat
{s}\right\}  \notin R_{p^{\prime}}.$
\end{enumerate}

We claim that $\widehat{r}$ forces that $\dot{f}[\dot{A}_{\beta_{\delta}}]$
has infinitely many elements below $t$ and infinitely many elements below $z,$
this will be a contradiction. Let $\widehat{r}_{1}\leq\widehat{r}$ and
$k\in\N,$ it will be enough to prove that we can extend $\widehat{r}_{1}$
to a condition that forces that there is $l>k$ such that $l$ is in $\dot
{A}_{\beta_{\delta}}$ and its image under $\dot{f}$ will be an extension of
$t$ whose height is bigger than $k$ (the case of $z$ is similar). Let
$\alpha\in \mathcal M\cap W_{0}$ such that $supp(\widehat{r_{1}})\cap M$ and $F_{\alpha}^{\prime}\setminus R^{\prime}$ are disjoint.
 Let $\widehat{r}_{2}$ be the greatest lower bound of
$\widehat{r}_{1}\cap \mathcal M$ and $r_{\alpha}^{\prime},$ note that $\widehat{r}%
_{2}\in \mathcal M.$ Let $e\in\omega$ such that $s_{0}$ $^{\frown}e$ has not been used
and let $v$ be extending $s_{0}$ $^{\frown}e$ such that $\left\vert v\right\vert =\left\vert s_{\beta_{\delta}}^{\widehat{r_{1}}%
}\right\vert $ 
and $\widehat{r}_{3}$ such that $s_{\beta_{\xi_{1}}}^{\widehat{r_{3}}}=v$
 (where $H_{\alpha}=\left\{  \xi_{1},\xi_{2}\right\}  $) and
$\{v,s_{\beta_{\delta}}^{\widehat{r_{1}}}\}\notin R_{\widehat{r_{3}}}.$ Since $\widehat{r}_{3},\dot{f}\in \mathcal M$ we can find
$\widehat{r}_{4}\in \mathcal M$ such that $\widehat{r}_{4}\leq\widehat{r}_{3}$ and
$l>k$ such that $\widehat{r_{4}}\Vdash l\in\dot{A}_{\beta_{\xi_{1}}}\wedge t\subseteq\dot
{f}\left(  l\right)  .$ Since the support of
$\widehat{r}_{4}$ is contained in $\mathcal M,$ then it is compatible with $\widehat
{r}_{1}.$ Since $\{v,s_{\beta_{\delta}}^{\widehat{r_{1}}}\}\notin R_{\widehat{r_{3}}},$ we can find a common extension that
forces that $l$ is in $\dot{A}_{\beta_{\delta}}.$
\end{proof}

\smallskip

The above family clearly does not have $\omega_1$-controlled $\R$-embedding
 property but a much stronger fact  concerning $\omega_1$-controlled $\R$-embedding 
property can be proved  in  the Cohen model.

\begin{theorem}\label{cohen-no-controlled} In the Cohen model,
no uncountable almost disjoint family $\A$   has $\omega_1$-controlled $\R$-embedding property.
\end{theorem}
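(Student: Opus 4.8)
The plan is to reduce the statement to a counting assertion and then defeat every candidate embedding in one stroke by a genericity argument, bypassing the $\Delta$-system machinery of Theorem \ref{cohen-main} entirely. First I would unwind the definition: an uncountable almost disjoint family $\A$ fails to have the $\omega_1$-controlled $\R$-embedding property as soon as there is a single $\phi:\A\to\R$ for which, for every $f:\N\to\R$, the set
$$E_f=\{A\in\A\mid \lim\nolimits_{n\in A}f(n)\ \hbox{exists and equals}\ \phi(A)\}$$
is countable. Indeed, any pair $(\B,f)$ witnessing the property for this $\phi$ would have $\B\subseteq E_f$ with $|\B|=\omega_1$, contradicting countability. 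So, working in $V[G]$ with $G$ a $\C_{\omega_2}$-generic over a model of \textsf{GCH}, I fix an enumeration $\A=\{A_\alpha\mid\alpha<\lambda\}$ with $\omega_1\le\lambda\le\mathfrak c=\omega_2$ and build such a $\phi$.

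For the construction I would exploit that, by the countable chain condition, every real of $V[G]$ lies in $V[G\upharpoonright S]$ for some countable $S\subseteq\omega_2$. For each $\alpha$ fix a countable $S_\alpha$ with $A_\alpha\in V[G\upharpoonright S_\alpha]$, and by a transfinite recursion of length $\lambda\le\omega_2$ choose \emph{distinct} ordinals $\delta_\alpha\in\omega_2\setminus S_\alpha$ (at stage $\alpha$ only the fewer than $\omega_2$ ordinals in $S_\alpha\cup\{\delta_\beta\mid\beta<\alpha\}$ are forbidden, so a choice exists). Fixing the homeomorphism $\rho\upharpoonright\N^\omega:\N^\omega\to\R\setminus\Q$ coded in $V$ from the proof of Lemma \ref{equivalences}, I then define $\phi(A_\alpha)=\rho(c_{\delta_\alpha})$, the $\rho$-image of the $\delta_\alpha$-th Cohen real. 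The key features are that each generic coordinate $\delta_\alpha$ is used exactly once and that $\delta_\alpha\notin S_\alpha$.

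The verification is where the single genericity step does all the work. Given any $f:\N\to\R$ in $V[G]$, by the ccc fix a countable $a\subseteq\omega_2$ with $f\in V[G\upharpoonright a]$. Since the $\delta_\alpha$ are distinct, $\{\alpha\mid\delta_\alpha\in a\}$ is countable, so it suffices to show $\lim_{n\in A_\alpha}f(n)\ne\phi(A_\alpha)$ whenever $\delta_\alpha\notin a$. For such $\alpha$ set $N=V[G\upharpoonright(a\cup S_\alpha)]$. Both $f$ and $A_\alpha$ lie in $N$, and the limit $\lim_{n\in A_\alpha}f(n)$, when it exists, is computed absolutely from $f\upharpoonright A_\alpha$ and $A_\alpha$, hence lies in $N$. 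On the other hand $\delta_\alpha\notin a\cup S_\alpha$, so by mutual genericity of the finite-support Cohen coordinates $c_{\delta_\alpha}$ is Cohen-generic over $N$ and is therefore not an element of $N$; as $\rho$ and its inverse are Borel and coded in $V\subseteq N$, also $\phi(A_\alpha)=\rho(c_{\delta_\alpha})\notin N$. Thus $\phi(A_\alpha)\ne\lim_{n\in A_\alpha}f(n)$, so $E_f\subseteq\{A_\alpha\mid\delta_\alpha\in a\}$ is countable, as required.

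The conceptual crux lies entirely in the two absoluteness facts: that a convergent limit of $f$ along $A_\alpha$ lands in the small submodel $N$ (immediate from its explicit description as a sup/inf over $f\upharpoonright A_\alpha$), and that a fresh, unused Cohen coordinate produces a real outside $N$ (the product, or mutual-genericity, lemma for finite-support Cohen forcing). I expect the only real bookkeeping is ensuring the $\delta_\alpha$ can be chosen distinct and off the corresponding $S_\alpha$; once the unused coordinate is isolated, genericity finishes the proof uniformly for families of size $\omega_1$ and $\omega_2$ alike.
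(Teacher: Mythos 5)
Your proof is correct and is essentially the paper's own argument: assign to each member of the family a fresh Cohen coordinate avoiding a countable support of that member, define $\phi$ from those Cohen reals, and use mutual genericity to see that any candidate $f$, living in a countable-support intermediate model, can only hit $\phi(A)$ for the countably many $A$ whose assigned coordinate lies in that support. The only differences are cosmetic: you transfer the Cohen real into $\R$ via the homeomorphism $\rho$ and spell out the countability bookkeeping for $E_f$, whereas the paper works with $2^\omega$-valued functions and leaves that step implicit.
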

\begin{proof} Let 
$\{c_\alpha\mid\alpha<\omega_2\}$ be the sequence of Cohen reals generating the Cohen model. 
Let $\F$ be an uncountable almost disjoint family.
For every $A\in \A$ there is a countable 
$X_A\subseteq\omega_2$ such that $A\in V[\{c_\alpha\mid \alpha\in X_A\}]$.
Define $\phi: \A\rightarrow 2^\omega$ by $\phi(A)=c_{\alpha_A}$ where $\alpha_A\not \in
X_A$ and all $\alpha_A$'s are distinct. 

Suppose that $f:\N\rightarrow 2^\omega$. There is a countable $Y\subseteq \omega_2$ such that
$f\in V[\{c_\alpha\mid \alpha\in Y\}]$. As $\A$ is uncountable, there is $A\in \A$ such
that $\alpha_A\not\in Y$, so $\alpha_A\not\in X_A\cup Y$. 
Hence $\lim_{n\in A}f(n)\not=c_{\alpha_A}=\phi(A)$, proving that 
$\A$ does not have $\omega_1$-controlled property.
\end{proof}

\begin{remark}
The above proofs remains valid for any  finite support product of not less than $2^\omega$
 c.c.c. forcings in place of the Cohen forcing.
\end{remark}

\section{$\R$-embeddability in the Sacks model}

By the \emph{Sacks model }we mean the model obtained by adding $\omega_{2}%
$-Sacks reals (with countable support) to a model of the \textsf{GCH}. Recall
that a tree $p\subseteq2^{<\omega}$ is a \emph{Sacks tree }if every node of
$p$ can be extended to a splitting node. We denote by $\mathbb{S}$ the
collection of all Sacks tree and we order it by inclusion. Given $\alpha
\leq\omega_{2}$ we denote by $\mathbb{S}_{\alpha}$ the countable support
iteration of $\mathbb{S}$ of length $\alpha.$\emph{ }We will now prove that
\ in the Sacks model, every almost disjoint family of size continuum contains an
$\R$-embeddable family of the same size. We will need to recall
some important notions and results on Sacks forcing. For more of this forcing
notion the reader may consult \cite{baumgartner-laver},
\cite{life} and \cite{miller}.

\begin{definition}
Let $\alpha\leq\omega_{2},$ $n,m\in\N.$

\begin{enumerate}
\item Given $p,q\in\mathbb{S}$ we say that $\left(  p,m\right)  \leq\left(
q,n\right)  $ if the following holds:

\begin{enumerate}
\item $p\leq q.$

\item $n\leq m.$

\item $q_{n}=p_{n}.$

\item If $n<m$ then for every $s\in q_{n}$ there are distinct $t_{0},t_{1}\in
p_{m}$ such that $s\subseteq t_{0},t_{1}.$
\end{enumerate}

\item Given $p,q\in\mathbb{S}_{\alpha}$ and $F\in\left[  \alpha\right]
^{<\omega}$ we say that $\left(  p,m\right)  \leq_{F}\left(  q,n\right)  $ if
the following holds:

\begin{enumerate}
\item $p\leq q.$

\item $n\leq m.$

\item if $\beta\in F$ then $p\upharpoonright\beta\Vdash\left(  p\left(
\beta\right)  ,m\right)  \leq\left(  q\left(  \beta\right)  ,n\right).$
\end{enumerate}
\end{enumerate}
\end{definition}

We will often use the following result:

\begin{lemma}
[Fusion lemma \cite{baumgartner-laver}]Let $\alpha\leq\omega_{2}$ and $\left\{
\left(  p_{i},F_{i},n_{i}\right)  \mid i\in\N\right\}  $ be a family such
that for every $i\in\N$ the following holds:

\begin{enumerate}
\item $p_{i}\in\mathbb{S}_{\alpha}.$

\item $F_{i}\in\left[  \alpha\right]  ^{<\omega}.$

\item $F_{i}\subseteq F_{i+1}.$

\item $n_{i}<n_{i+1}.$

\item $\left(  p_{i+1},n_{i+1}\right)  \leq_{F_{i}}\left(  p_{i},n_{i}\right)
.$

\item $%
{\textstyle\bigcup\limits_{j\in\N}}
F_{j}=%
{\textstyle\bigcup\limits_{j\in\N}}
supp\left(  p_{j}\right)  $
\end{enumerate}

Define $p$ such that $supp\left(  p\right)  =%
{\textstyle\bigcup\limits_{j\in\N}}
supp\left(  p_{j}\right)  $ and if $\beta\in supp\left(  p\right)  $ then
$p\left(  \beta\right)  $ is a $\mathbb{S}_{\beta}$-name for the intersection
of $\left\{  p_{i}\left(  \beta\right)  \mid\beta\in supp\left(  p_{i}\right)
\right\}  .$ Then $p\in\mathbb{S}_{\alpha}$ and $p\leq p_{i}$ for every
$i\in\N.$
\end{lemma}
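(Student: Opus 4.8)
The plan is to prove the lemma by induction on $\beta \leq \alpha$, establishing simultaneously that $p \upharpoonright \beta \in \mathbb{S}_\beta$ and that $p \upharpoonright \beta \leq p_i \upharpoonright \beta$ for every $i \in \N$; the lemma is then the case $\beta = \alpha$. The engine of the whole argument is the single-step fusion fact inside $\mathbb{S}$ itself: if $(q^{i+1}, n_{i+1}) \leq (q^i, n_i)$ for all $i$ with $n_i < n_{i+1}$, then $q = \bigcap_i q^i$ is a Sacks tree with $q \leq q^i$ for every $i$. First I would verify this base fact. By clause (c) of the single-step ordering, the $n_i$-th splitting level of $q^i$ equals that of $q^{i+1}$, so once $n_j \geq n$ the $n$-th splitting level of $q^j$ is constant for all large $j$; since $n_i \to \infty$ and clause (d) guarantees genuine splitting past each frozen level, every node of $q$ extends within $q$ to a splitting node, whence $q$ is perfect. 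The containment $q \subseteq q^i$ is immediate from the definition of $q$.

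For the successor step, assume $p \upharpoonright \beta \in \mathbb{S}_\beta$ and $p \upharpoonright \beta \leq p_i \upharpoonright \beta$ for all $i$. If $\beta \notin supp(p)$ there is nothing to do, so suppose $\beta \in supp(p) = \bigcup_j supp(p_j)$. By condition (6) together with the monotonicity $F_i \subseteq F_{i+1}$ there is $i_0$ with $\beta \in F_i$ for all $i \geq i_0$. Working below $p \upharpoonright \beta$ and passing to the generic extension $V[G_\beta]$, clause (c) of the iterated ordering $\leq_{F_i}$ forces $(p_{i+1}(\beta), n_{i+1}) \leq (p_i(\beta), n_i)$ for every $i \geq i_0$. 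Thus the sequence $\langle p_i(\beta) \mid i \geq i_0 \rangle$ is forced to be a single-step fusion sequence in $\mathbb{S}^{V[G_\beta]}$, and the base fact gives that $p(\beta) = \bigcap_{i \geq i_0} p_i(\beta)$ is forced to be a Sacks tree contained in each $p_i(\beta)$. Hence $p \upharpoonright (\beta+1) \in \mathbb{S}_{\beta+1}$ and $p \upharpoonright (\beta+1) \leq p_i \upharpoonright (\beta+1)$ for all $i$.

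The limit step is where the countable support is used. For limit $\lambda \leq \alpha$, the support $supp(p \upharpoonright \lambda) = \bigcup_j \bigl(supp(p_j) \cap \lambda\bigr)$ is a countable union of countable sets, hence countable, so the restriction has admissible support. Membership $p \upharpoonright \lambda \in \mathbb{S}_\lambda$ and the coordinatewise inequalities $p \upharpoonright \lambda \leq p_i \upharpoonright \lambda$ then follow from the inductive hypotheses at every $\beta < \lambda$, since a condition in the countable support iteration and the iteration order are both determined coordinatewise.

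The part demanding the most care is the successor step, specifically the claim that below $p \upharpoonright \beta$ the sequence $\langle p_i(\beta) \mid i \geq i_0 \rangle$ is genuinely forced to be a fusion sequence in the extension. This requires that $\leq_{F_i}$ transmits the single-step relation $(p_{i+1}(\beta), n_{i+1}) \leq (p_i(\beta), n_i)$ not merely below $p_{i+1} \upharpoonright \beta$ but below the smaller $p \upharpoonright \beta$, which is legitimate because $p \upharpoonright \beta \leq p_{i+1} \upharpoonright \beta$ by the inductive hypothesis, and because the indices $n_i$ are chosen independently of $\beta$, so that $n_i \to \infty$ drives the fusion at every coordinate simultaneously. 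Once this coherence is in place, applying the base fact inside $V[G_\beta]$ and closing the induction is routine.
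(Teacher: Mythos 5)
The paper offers no proof of this lemma at all---it is quoted as a known result from Baumgartner--Laver---so the only available comparison is with the standard fusion argument, which is exactly what you give, and your proof is correct: the coordinatewise induction, with clause (c) of $\leq_{F_i}$ transmitting the single-step fusion relation below $p\upharpoonright\beta$ (legitimate precisely because $p\upharpoonright\beta\leq p_{i+1}\upharpoonright\beta$ by the inductive hypothesis), hypothesis (6) together with (3) putting every coordinate of $supp(p)$ into all but finitely many $F_i$, and countability of supports at limit stages, is the standard route. The only step you leave tacit is the identification of $p(\beta)$, defined as the intersection over all $i$ with $\beta\in supp(p_i)$, with the tail intersection $\bigcap_{i\geq i_0}p_i(\beta)$ to which the base fact applies; this is harmless, since clause (a) of the orderings gives that $p_{i_0}(\beta)\subseteq p_i(\beta)$ is forced for the finitely many $i<i_0$ with $\beta\in supp(p_i)$.
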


If $p\in\mathbb{S}$ and $s\in2^{<\omega}$ we define $p_{s}=\left\{  t\in p\mid
t\subseteq s\vee s\subseteq t\right\}  .$ Note that $p_{s}$ is a Sacks tree if
and only if $s\in p.$

\begin{definition}
Let $p\in\mathbb{S}_{\alpha},$ $F\in\left[  supp\left(  p\right)  \right]
^{<\omega}$ and $\sigma:F\longrightarrow2^{n}.$ We define $p_{\sigma}$ as follows:

\begin{enumerate}
\item $supp\left(  p_{\sigma}\right)  =supp\left(  p\right)  .$

\item Letting $\beta<\alpha$ the following holds:

\begin{enumerate}
\item $p_{\sigma}\left(  \beta\right)  =p\left(  \beta\right)  $ if
$\beta\notin F.$

\item $p_{\sigma}\left(  \beta\right)  =p\left(  \beta\right)  _{\sigma\left(
\beta\right)  }$ if $\beta\in F.$
\end{enumerate}
\end{enumerate}
\end{definition}

Similar to previous situation, $p_{\sigma}$ is not necessarily a condition of
$\mathbb{S}_{\alpha}.$ We will say that $\sigma:F\longrightarrow2^{n}$ is
\emph{consistent with }$p$ if $p_{\sigma}\in\mathbb{S}_{\alpha}.$ A condition
$p$ is $\left(  F,n\right)  $\emph{-determined }if for every $\sigma
:F\longrightarrow2^{n}$ either $\sigma$ is consistent with $p$ or there is
$\beta\in F$ such that $\sigma\upharpoonright\left(  F\cap\beta\right)  $ is
consistent with $p$ and $(p\upharpoonright\beta)_{\sigma\upharpoonright\left(
F\cap\beta\right)  }\Vdash\sigma\left(  \beta\right)  \notin p\left(
\beta\right).$

We say that $p\in\mathbb{S}_{\alpha}$ is \emph{continous }if for every
$F\in\left[  supp\left(  p\right)  \right]  ^{<\omega}$ and for every
$n\in\N$ there are $G$ and $m$ such that the following holds:

\begin{enumerate}
\item $G\in\left[  supp\left(  p\right)  \right]  ^{<\omega}.$

\item $F\subseteq G.$

\item $n<m.$

\item $p$ is $\left(  G,m\right)  $-determined.
\end{enumerate}

We will need the following lemmas:\ \ 

\begin{lemma}
[\cite{baumgartner-laver}]Let $p\in\mathbb{S}_{\alpha},n\in\N$ and
$F\in\left[  supp\left(  p\right)  \right]  ^{<\omega}.$ There is $\left(
q,m\right)  \leq_{F}\left(  p,n\right)  $ such that $q$ is $\left(
F,n\right)  $-determined.
\end{lemma}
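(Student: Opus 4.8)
The plan is to prove the lemma by induction on $|F|$. When $F=\emptyset$ there is a unique $\sigma\colon\emptyset\to 2^{n}$, the empty function, and $p_{\sigma}=p$ is a condition, so $p$ is vacuously $(\emptyset,n)$-determined and $q=p$ with any $m\geq n$ works. For the inductive step set $\beta=\max F$ and $F'=F\setminus\{\beta\}$, so $F'\subseteq\beta$ and $|F'|<|F|$. I would first dispose of all coordinates below $\beta$: applying the induction hypothesis inside $\mathbb{S}_{\beta}$ to $p\upharpoonright\beta$, $F'$ and $n$ yields $(r,m_{1})\leq_{F'}(p\upharpoonright\beta,n)$ with $r\in\mathbb{S}_{\beta}$ being $(F',n)$-determined. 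Gluing $r$ back onto the untouched tail, I let $p_{1}\upharpoonright\beta=r$, $p_{1}(\beta)=p(\beta)$ and $p_{1}\upharpoonright[\beta+1,\alpha)=p\upharpoonright[\beta+1,\alpha)$. Since coordinate $\beta$ and everything above it is left literally unchanged, $(p_{1},m_{1})\leq_{F'}(p,n)$, and crucially $p_{1}(\beta)$ is the very same name as $p(\beta)$, so the set $p_1(\beta)_n$ of length-$n$ nodes at $\beta$ is not disturbed.

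Next I would decide the coordinate $\beta$, the key point being that one never alters the tree $p_{1}(\beta)$; one only strengthens below $\beta$ to decide, for each length-$n$ node $s$, whether $s\in p_{1}(\beta)$. Let $\tau_{1},\dots,\tau_{r}$ enumerate the finitely many $\tau\colon F'\to 2^{n}$ consistent with $p_{1}$; the stems $(p_{1})_{\tau_{j}}\upharpoonright\beta$ are pairwise incompatible in $\mathbb{S}_{\beta}$, since distinct $\tau_{j}$ pin the lower generic to distinct nodes at some coordinate of $F'$. For each pair $(\tau_{j},s)$ with $s\in 2^{n}$ I strengthen $(p_{1})_{\tau_{j}}\upharpoonright\beta$ to force a definite truth value of ``$s\in p_{1}(\beta)$'', staying above the length-$n$ fronts at the coordinates of $F'$ so as to preserve $\leq_{F'}(\cdot,n)$; as forced statements are upward absolute, running through the finitely many $s$ for a fixed $\tau_{j}$, and then through the $\tau_{j}$, never undoes an earlier decision. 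Amalgamating these finitely many local strengthenings over the incompatible stems gives $q\upharpoonright\beta\leq_{F'}(p_{1}\upharpoonright\beta,n)$; I set $q(\beta)=p(\beta)$ and keep the tail. Finally, since $q\upharpoonright\beta$ forces $q(\beta)\in\mathbb{S}$ and there are only finitely many length-$n$ nodes and stems, a routine further strengthening below $\beta$ forces a uniform $m\geq n$ by which every length-$n$ node splits, yielding $(q,m)\leq_{F}(p,n)$.

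To verify that $q$ is $(F,n)$-determined I would use that refining a Sacks tree only above its length-$n$ front removes no length-$n$ node and destroys no consistency of stems, so $\tau\colon F'\to 2^{n}$ is consistent with $q$ iff it is consistent with $p_{1}$. Given $\sigma\colon F\to 2^{n}$ put $\tau=\sigma\upharpoonright F'$. If $\tau$ is consistent with $q$ then $\tau=\tau_{j}$ for some $j$, and the decision made for $(\tau_{j},\sigma(\beta))$ shows that either $q_{\sigma}\in\mathbb{S}_{\alpha}$ (the node $\sigma(\beta)$ is forced into $q(\beta)$) or $\beta$ itself witnesses inconsistency (it is forced out). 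If $\tau$ is inconsistent with $q$, hence with $p_{1}$, then $(F',n)$-determinedness of $p_{1}$ supplies $\gamma\in F'\subseteq F$ with $\sigma\upharpoonright(F\cap\gamma)$ consistent and $(p_{1}\upharpoonright\gamma)_{\sigma\upharpoonright(F\cap\gamma)}\Vdash\sigma(\gamma)\notin p_{1}(\gamma)$; since $q\upharpoonright\gamma$ refines $p_{1}\upharpoonright\gamma$ only above the length-$n$ front, the same $\gamma$ witnesses inconsistency of $\sigma$ relative to $q$. Both cases match the definition.

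I expect the only genuine difficulty to be the amalgamation in the second paragraph: inserting a strengthening made below one stem $(p_{1})_{\tau_{j}}\upharpoonright\beta$ into a single global condition of $\mathbb{S}_{\beta}$ without disturbing the other, incompatible stems and while keeping $q\upharpoonright\beta\leq_{F'}(p_{1}\upharpoonright\beta,n)$. In the iterated, name-based setting this must itself be carried out by an inner recursion on the coordinates below $\beta$, gluing the subtrees above each length-$n$ node, which is exactly the bookkeeping encapsulated by the Fusion lemma; everything else is a finite and essentially formal manipulation.
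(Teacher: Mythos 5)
The paper never proves this lemma---it is quoted from Baumgartner--Laver---so your argument can only be measured against the standard proof, and in outline it \emph{is} that proof: induction on $|F|$, peeling off $\beta=\max F$, applying the inductive hypothesis to $p\upharpoonright\beta$ inside $\mathbb{S}_{\beta}$, handling the top coordinate by deciding, separately below each of the finitely many consistent pieces $(p_{1})_{\tau_{j}}\upharpoonright\beta$, the statements ``$s\in p_{1}(\beta)$'' for $s\in 2^{n}$, and invoking the inductive witnesses for those $\sigma$ whose restriction to $F'$ is inconsistent. Your final verification of $(F,n)$-determinedness from these ingredients is sound. You are also implicitly---and correctly---reading $p_{n}$ as the set $p\cap 2^{n}$ of length-$n$ nodes rather than as the $n$-th splitting level, and this is load-bearing: only under that reading is $\leq_{F'}(\cdot,n)$ weak enough that statements can be decided inside a piece at all. (Under the splitting-level reading your key step, deciding ``$s\in p_{1}(\beta)$'' while never altering the name $p_{1}(\beta)$, is outright impossible: let $p_{1}(\delta)$, $\delta\in F'$, have stem of length $100$ and let $p_{1}(\beta)$ be the mixed name equal to the tree of sequences starting with $0$ if the generic branch at $\delta$ passes through stem$^{\frown}0$, and to the tree of sequences starting with $1$ otherwise; preserving the splitting levels of $p_{1}(\delta)$ forbids deciding which way the generic goes at the stem, hence forbids deciding $p_{1}(\beta)\cap 2^{n}$---and in fact no $\leq_{F}$-extension would be $(F,n)$-determined, so the lemma itself would fail under that reading.)

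The genuine gap is the step you yourself flag and then wave away: the amalgamation. It is \emph{not} ``exactly the bookkeeping encapsulated by the Fusion lemma''. The Fusion lemma of the paper takes an $\omega$-sequence with $(p_{i+1},n_{i+1})\leq_{F_{i}}(p_{i},n_{i})$ and produces a lower bound; it contains no mechanism for taking a single strengthening $r\leq (p_{1})_{\tau_{j}}\upharpoonright\beta$ of one piece and re-inserting it into a global condition $q\leq p_{1}\upharpoonright\beta$ with $(q,n)\leq_{F'}(p_{1}\upharpoonright\beta,n)$, $q_{\tau_{j}}\leq r$, and the other pieces intact. That re-insertion is a separate lemma, and it is the actual mathematical content of the one you are proving. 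It must be done by recursion on $\gamma<\beta$, taking $q(\gamma)$ to be the mixed name ``$r(\gamma)\cup\{t\in p_{1}(\gamma):t\perp\tau_{j}(\gamma)\}$ (downward closed) if the generic below $\gamma$ is compatible with $r\upharpoonright\gamma$, and $p_{1}(\gamma)$ otherwise'', and then one has to check: (i) the glued tree is forced to be perfect; (ii) for $\gamma\in F'$ its trace on $2^{n}$ equals that of $p_{1}(\gamma)$---this is exactly where indexing the pieces by length-$n$ nodes is used, and it is what yields $\leq_{F'}(\cdot,n)$; (iii) $q_{\tau_{j}}\leq r$, so the decision you made really is made by a piece of the final condition; and (iv) a subsequent amalgamation for $\tau_{j'}$, $j'\neq j$, satisfies $q'_{\tau_{j}}\leq q_{\tau_{j}}$, so earlier decisions persist---your ``upward absoluteness'' remark needs precisely this, and it is not automatic, since below the first coordinate where $\tau_{j}$ and $\tau_{j'}$ differ the $\tau_{j'}$-amalgamation does modify the tree above $\tau_{j}(\gamma)=\tau_{j'}(\gamma)$. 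None of this follows from, or is even addressed by, the Fusion lemma; until it is written out, your second and third paragraphs are assertions rather than a proof.
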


\begin{lemma}
[\cite{life}]For every $p\in\mathbb{S}_{\alpha}$ there is a
continous $q\leq p$ such that $q$ is continous.
\end{lemma}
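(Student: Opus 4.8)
The statement asserts that the continuous conditions are dense in $\mathbb{S}_\alpha$, and the natural route is a fusion argument built on the two preceding lemmas. First I would fix $p\in\mathbb{S}_\alpha$ and note that $\mathrm{supp}(p)$ is countable, enumerating it as $\{\beta_k\mid k\in\N\}$. The plan is to construct a fusion sequence $\{(p_i,F_i,n_i)\mid i\in\N\}$ satisfying the hypotheses of the Fusion Lemma --- that is, $p_0=p$, $F_i\subseteq F_{i+1}$, $n_i<n_{i+1}$, $(p_{i+1},n_{i+1})\le_{F_i}(p_i,n_i)$, and $\bigcup_i F_i=\bigcup_i\mathrm{supp}(p_i)$ --- with the extra feature that each $p_{i+1}$ is $(F_i,n_i)$-determined. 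At stage $i$ I would first enlarge $F_i$ so as to absorb $\beta_i$ and any coordinate that has entered a support so far (standard diagonal bookkeeping securing the exhaustion clause), pick $n_{i+1}>n_i$, and then apply the determinacy lemma of \cite{baumgartner-laver} to $(p_i,F_i,n_i)$ to obtain $(p_{i+1},n_{i+1})\le_{F_i}(p_i,n_i)$ with $p_{i+1}$ being $(F_i,n_i)$-determined.

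Let $q$ be the fusion limit provided by the Fusion Lemma, so that $q\in\mathbb{S}_\alpha$ and $q\le p$. To see that $q$ is continuous, fix $F\in[\mathrm{supp}(q)]^{<\omega}$ and $n\in\N$. Since $\mathrm{supp}(q)=\bigcup_i F_i$ with the $F_i$ increasing, and since the $n_i$ increase, I can choose $i$ with $F\subseteq F_i$ and $n<n_i$; putting $G=F_i$ and $m=n_i$ reduces the continuity requirement to showing that $q$ itself is $(F_i,n_i)$-determined. Thus the whole proof comes down to the inheritance claim: if $p_{i+1}$ is $(F_i,n_i)$-determined then so is $q$.

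This inheritance claim is the only real obstacle. Its proof rests on the observation that for every $j\ge i$ the step $(p_{j+1},n_{j+1})\le_{F_j}(p_j,n_j)$ freezes, on each coordinate $\beta\in F_i\subseteq F_j$, the first $n_i\le n_j$ splitting levels of the tree; consequently $q(\beta)$ agrees with $p_{i+1}(\beta)$ up to the $n_i$-th splitting level for every $\beta\in F_i$. Since a map $\sigma\colon F_i\to 2^{n_i}$ restricts only the coordinates in $F_i$, and does so at level $n_i$, whether $\sigma$ is consistent with a given condition, and --- when it is not --- which coordinate $\beta$ witnesses the failure via the forced relation $(\cdot\upharpoonright\beta)_{\sigma\upharpoonright(F_i\cap\beta)}\Vdash\sigma(\beta)\notin(\cdot)(\beta)$, depends only on this frozen initial part together with the fact that $q\le p_{i+1}$. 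I would verify the two directions separately: consistency with $p_{i+1}$ transfers to consistency with $q$ because the relevant splitting nodes survive into $q(\beta)$; and a witnessed inconsistency for $p_{i+1}$ transfers because $q(\beta)\subseteq p_{i+1}(\beta)$ forces the same omission. Establishing that the determinacy data genuinely localize to the frozen initial segment --- so that later fusion steps cannot disturb them --- is the technical heart of the argument, and once it is in place the density of continuous conditions, hence the lemma, follows immediately.
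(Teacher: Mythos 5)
The paper gives no proof of this lemma at all: it is quoted as a known result from \cite{life}, so there is no in-paper argument to compare yours against. Your proof --- iterate the Baumgartner--Laver determinacy lemma along a fusion sequence with standard support bookkeeping, take the fusion limit $q$, and reduce continuity of $q$ to the claim that $q$ inherits $(F_i,n_i)$-determinacy from $p_{i+1}$ --- is exactly the standard argument for this density fact, and it is correct in outline. The one point you should make explicit in the inheritance step is the bridge between what the relation $\le_{F_j}$ actually freezes (the $n_i$-th \emph{splitting} levels of the coordinates in $F_i$, in the sense of clause (c) of the fusion ordering) and what determinacy refers to (membership of arbitrary strings $\sigma(\beta)\in 2^{n_i}$ in the trees): since every node of length $n_i$ of a perfect tree is an initial segment of one of its $n_i$-th splitting nodes, freezing the splitting level freezes the set of level-$n_i$ nodes, after which your two transfer directions work --- consistency of $\sigma$ passes from $p_{i+1}$ to $q$ by induction along the coordinates of $F_i$ (using that $q_{\sigma}\upharpoonright\beta$ extends both $q\upharpoonright\beta$ and $(p_{i+1})_{\sigma}\upharpoonright\beta$), and a witnessed inconsistency passes down because $q\upharpoonright\beta$ forces $q(\beta)\subseteq p_{i+1}(\beta)$.
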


\bigskip Let $p$ be a continuous condition. We say that $\left\{  \left(
F_{i},n_{i},\Sigma_{i}\right)  \mid i\in\omega\right\}  $ is \emph{a
representation of }$p$ if the following holds:

\begin{enumerate}
\item $F_{i}\in\left[  supp\left(  p\right)  \right]  ^{<\omega},$ $n_{i}%
\in\omega.$

\item $F_{i}\subseteq F_{i+1}$ and $n_{i}<n_{i+1}.$

\item $supp\left(  p\right)  =%
{\textstyle\bigcup\limits_{i\in\N}}
F_{i}.$

\item $p$ is $\left(  F_{i},n_{i}\right)  $-determined for every $i\in\omega.$

\item $\Sigma_{i}$ is the set of all $\sigma:F_{i}\longrightarrow2^{n_{i}}$
such that $\sigma$ is consistent with $p.$
\end{enumerate}

Note that if $\left\{  \left(  F_{i},n_{i},\Sigma_{i}\right)  \mid i\in
\omega\right\}  $ is a representation of $p$ and $f:\omega\longrightarrow
\omega$ is an increasing function, then $\left\{  \left(  F_{f\left(
i\right)  },n_{f\left(  i\right)  },\Sigma_{f\left(  i\right)  }\right)  \mid
i\in\N\right\}  $ is also a representation of $p.$ It is also easy to see
that if $p$ is continuous with representation $\left\{  \left(  F_{i}%
,n_{i},\Sigma_{i}\right)  \mid i\in\N\right\}  $ and $\sigma\in\Sigma
_{i},$ then $p_{\sigma}$ is also a continuous condition. Given a continuous
condition $p\in\mathbb{S}_{\alpha}$ and $R=\left\{  \left(  F_{i},n_{i}%
,\Sigma_{i}\right)  \mid i\in\N\right\}  $ a representation of $p,$ we
define $\left[  p\right]  _{R}$ as the set of all $\left\langle y_{\beta
}\right\rangle _{\beta\in supp\left(  p\right)  }\in\left(  2^{\omega}\right)
^{supp\left(  p\right)  }$ such that for every $i\in\omega$ the function
$\sigma:F_{i}\longrightarrow2^{n_{i}}$ given by $\sigma\left(  \beta\right)
=y_{\beta}\upharpoonright n_{i}$ belongs to $\Sigma_{i}.$

\begin{lemma}
Let $p\in\mathbb{S}_{\alpha}$ be a continuous condition. If $R=\left\{  \left(
F_{i},n_{i},\Sigma_{i}\right)  \mid i\in\N\right\}  $ and $R^{\prime
}=\left\{  \left(  G_{i},m_{i},\Pi_{i}\right)  \mid i\in\N\right\}  $ are
two representations of $p,$ then $\left[  p\right]  _{R}=\left[  p\right]
_{R^{\prime}}.$
\end{lemma}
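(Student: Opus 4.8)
The plan is to exhibit an intrinsic description of $[p]_R$ that makes no reference to the representation $R$. For $y=\langle y_\beta\rangle_{\beta\in\text{supp}(p)}$ and a pair $(F,n)$ with $F\in[\text{supp}(p)]^{<\omega}$ and $n\in\N$, write $\sigma^y_{F,n}\colon F\to 2^n$ for the map $\sigma^y_{F,n}(\beta)=y_\beta\upharpoonright n$. I would then set
$$[p]^{*}=\{\,y\in(2^\omega)^{\text{supp}(p)}\mid \sigma^y_{F,n}\text{ is consistent with }p\text{ whenever }p\text{ is }(F,n)\text{-determined}\,\}$$
and prove that $[p]_R=[p]^{*}$ for \emph{every} representation $R$; since the right-hand side does not mention $R$, this yields $[p]_R=[p]_{R'}$ at once.

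The engine of the proof is the following \emph{monotonicity of consistency}: if $F\subseteq F'$, $n\le n'$, $p$ is $(F,n)$-determined, and $\sigma\colon F'\to 2^{n'}$ is consistent with $p$, then the truncation $\tau\colon F\to 2^n$ given by $\tau(\beta)=\sigma(\beta)\upharpoonright n$ is also consistent with $p$. First I would record the coordinatewise comparison $(p\upharpoonright\beta)_{\sigma\upharpoonright(F'\cap\beta)}\le(p\upharpoonright\beta)_{\tau\upharpoonright(F\cap\beta)}$ for every $\beta$, which holds because $\tau(\gamma)$ is an initial segment of $\sigma(\gamma)$ and Sacks trees are closed under initial segments, so the $\sigma$-restriction of $p$ sits coordinatewise inside the $\tau$-restriction. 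To prove monotonicity I argue by contradiction: if $\tau$ were not consistent, then by $(F,n)$-determinacy there is $\beta\in F$ with $\tau\upharpoonright(F\cap\beta)$ consistent and $(p\upharpoonright\beta)_{\tau\upharpoonright(F\cap\beta)}\Vdash\tau(\beta)\notin p(\beta)$. On the other hand, $B:=(p\upharpoonright\beta)_{\sigma\upharpoonright(F'\cap\beta)}=p_\sigma\upharpoonright\beta$ is a genuine condition, being a restriction of the condition $p_\sigma$; and since $p_\sigma$ is a condition, $B$ forces $p_\sigma(\beta)=p(\beta)_{\sigma(\beta)}$ to be a Sacks tree, whence $B\Vdash\sigma(\beta)\in p(\beta)$ and therefore $B\Vdash\tau(\beta)\in p(\beta)$. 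But by the coordinatewise comparison $B$ extends $(p\upharpoonright\beta)_{\tau\upharpoonright(F\cap\beta)}$, so $B$ must also force $\tau(\beta)\notin p(\beta)$, a contradiction.

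With monotonicity in hand, both inclusions are routine. For $[p]_R\subseteq[p]^{*}$, take $y\in[p]_R$ and a pair $(F,n)$ at which $p$ is determined; choose $i$ with $F\subseteq F_i$ and $n\le n_i$, which is possible since the $F_i$ increase to $\text{supp}(p)$ and $n_i\to\infty$. Then $\sigma^y_{F_i,n_i}\in\Sigma_i$ is consistent with $p$, and applying monotonicity shows its truncation $\sigma^y_{F,n}$ is consistent with $p$, so $y\in[p]^{*}$. For $[p]^{*}\subseteq[p]_R$, take $y\in[p]^{*}$; since $p$ is $(F_i,n_i)$-determined for every $i$, each $\sigma^y_{F_i,n_i}$ is consistent with $p$, i.e.\ lies in $\Sigma_i$, which is exactly the requirement for $y\in[p]_R$.

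I expect the monotonicity lemma to be the only delicate point, and within it the bookkeeping showing that the $(F,n)$-determinacy hypothesis is precisely what lets me locate the offending coordinate $\beta$ and transfer the forcing statement ``$\tau(\beta)\notin p(\beta)$'' to the stronger condition $B$. The reduction to the intrinsic set $[p]^{*}$ and the verification of the two inclusions are then essentially formal.
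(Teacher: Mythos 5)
Your proposal is correct, and its engine---the monotonicity lemma, proved by locating the offending coordinate $\beta$ via $(F,n)$-determinacy, comparing the coordinatewise restricted conditions, and invoking downward closure of Sacks trees---is exactly the paper's argument stated in contrapositive form (the paper takes a point of $[p]_R\setminus[p]_{R'}$, gets an inconsistent $\sigma$ from $R'$ and a finer consistent $\tau$ from $R$, and derives the same contradiction). The only difference is organizational: you factor the argument through the representation-free set $[p]^{*}$, whereas the paper applies the comparison directly to the two given representations.
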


\begin{proof}
We argue by contradiction, assume that there is $\overline{y}=\left\langle
y_{\beta}\right\rangle _{\beta<a}\in\left[  p\right]  _{R}\setminus\left[
p\right]  _{R^{\prime}}.$ Since $\overline{y}\notin\left[  p\right]
_{R^{\prime}}$ there must be $i\in\omega$ such that the function $\sigma
:G_{i}\longrightarrow2^{m_{i}}$ given by $\sigma\left(  \beta\right)
=y_{\beta}\upharpoonright m_{i}$ is not in $\Pi_{i}$, i.e. $\sigma$ is not
consistent with $p.$ Since $p$ is $\left(  G_{i},m_{i}\right)  $-determined,
there is $\beta\in G_{i}$ such that $\sigma\upharpoonright\left(  G_{i}%
\cap\beta\right)  $ is consistent with $p$ but $p_{\sigma\upharpoonright
\left(  G_{i}\cap\beta\right)  }\Vdash\sigma\left(  \beta\right)  \notin
p\left(  \beta\right).$ Let $j\in\omega$ such that
$G_{i}\subseteq F_{j}$ and $m_{i}<n_{j}.$ Since $\overline{y}\in\left[
p\right]  _{R}$ we know that the function $\tau:F_{j}\longrightarrow2^{n_{j}}$
given by $\tau\left(  \xi\right)  =y_{\xi}\upharpoonright n_{j}$ is consistent
with $p.$ It is clear that $p_{\tau\upharpoonright\left(  F_{j}\cap
\beta\right)  }\leq p_{\sigma\upharpoonright\left(  G_{i}\cap\beta\right)  }$
and $\sigma\left(  \beta\right)  \subseteq\tau\left(  \beta\right)  $ so
$p_{\tau\upharpoonright\left(  F_{j}\cap\beta\right)  }$ forces that
$\tau\left(  \beta\right)  $ is not in $p\left(  \beta\right)  ,$ which
contradicts the fact that $\tau$ is consistent with $p.$
\end{proof}

In light of the previous result, we will omit the subscript and only write
$\left[  p\right]  $ to refer to $\left[  p\right]  _{R}$ where $R$ is any
representation of $p.$ It is easy to see that if $p\in\mathbb{S}_{\alpha}$ is
a continuous condition then $\left[  p\right]  $ is a compact set and
$p\Vdash\overline{s}_{gen}\upharpoonright supp\left(  p\right)  \in\left[
p\right]$ (where $\overline{s}_{gen}$ is the sequence of
generic reals). Let $S\in\left[  \omega_{2}\right]  ^{\omega}$ and
$\sigma:F\longrightarrow2^{<\omega}$ where $F\in\left[  S\right]  ^{<\omega},$
we define $\left\langle \sigma\right\rangle _{S}$ as the set  $\{\left\langle
y_{\beta}\right\rangle _{\beta\in S}\in\left(  2^{\omega}\right)  ^{S}%
\mid\forall\beta\in F\left(  \sigma\left(  \beta\right)  \subseteq y_{\beta
}\right)  \}.$ Note that this is family of sets are the basis for the topology
of $\left(  2^{\omega}\right)  ^{S}.$ The following result is well known:

\begin{lemma}
[Continuous reading of names for Sacks forcing]Let $\alpha<\omega_{2},$
$p\in\mathbb{S}_{\alpha}$ and $\dot{x}$ be a $\mathbb{S}_{\alpha}$-name such
that $p\Vdash\dot{x}\in\left[  \omega\right]  ^{\omega}.$
There is a continuous condition $q\leq p$ and a continuous function $F:\left[
q\right]  \longrightarrow\left[  \N\right]  ^{\omega}$ such that
$q\Vdash$$F\left(  \dot{s}_{gen}\upharpoonright supp\left(  q\right)  \right)
=\dot{x}$ (where $\dot{s}_{gen}$ is the name for the generic real). 
\end{lemma}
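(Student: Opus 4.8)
The plan is to prove the continuous reading of names for the countable-support iteration $\SSS_\alpha$ by combining the fusion machinery with the fact that a continuous condition $q$ has a compact branch space $[q]$ on which generic objects are read off coordinatewise. First I would replace the given $p$ by a continuous $q_0\le p$ using the lemma of \cite{life} that continuous conditions are dense. This is the crucial simplification: once the condition is continuous, $[q_0]$ is compact and the map sending the generic tuple $\overline{s}_{gen}\upharpoonright supp(q_0)$ into $[q_0]$ is well defined, so I have a concrete compact space to build the continuous function on.

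Next I would fix a representation $R=\{(F_i,n_i,\Sigma_i)\mid i\in\N\}$ of $q_0$ and build, by a fusion construction, a decreasing sequence of continuous conditions that decide larger and larger initial segments of $\dot x$ on smaller and smaller basic clopen pieces of $[q_0]$. Concretely, at stage $i$ I would consider each $\sigma\in\Sigma_i$ and, since $(q_0)_\sigma$ is a continuous condition forcing $\dot x\in[\omega]^\omega$, pass below it to a condition deciding $\dot x\cap n$ for some $n$ growing with $i$, while keeping the amalgamated condition inside the fusion sequence by respecting the $(F_i,n_i)$-splitting requirements. The point of the fusion lemma is that the limit condition $q$ still lies in $\SSS_\alpha$, is continuous (after a further thinning using the density lemma for continuity), and below it every value of $\dot x$ up to height $n_i$ is determined by the restriction $\overline{s}_{gen}\upharpoonright$ to the coordinates in $F_i$ truncated at $n_i$, i.e. by the $\sigma\in\Sigma_i$ that $\overline{s}_{gen}$ realizes.

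With this in hand I would define $F:[q]\rightarrow[\N]^\omega$ by setting, for $\overline{y}\in[q]$, the membership of $k$ in $F(\overline{y})$ according to the decision made by the unique compatible $\sigma\in\Sigma_i$ (for $i$ large enough that $n_i>k$) determined by $\overline{y}$ via $\sigma(\beta)=y_\beta\upharpoonright n_i$. The fact that the decisions are coherent across stages is exactly what guarantees this is well defined, and continuity of $F$ is immediate because whether $k\in F(\overline{y})$ depends only on finitely many coordinates of $\overline{y}$ truncated at a finite level, so $F$ is locally constant on each basic piece $\lin\sigma\rin_{supp(q)}\cap[q]$. Finally $q\forces F(\overline{s}_{gen}\upharpoonright supp(q))=\dot x$ holds because the generic tuple realizes exactly the $\sigma$'s used in the decisions, and $q$ forces $\dot x\in[\omega]^\omega$ so the output is genuinely infinite.

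The main obstacle I expect is the bookkeeping in the fusion step: one must decide $\dot x\upharpoonright n_i$ \emph{uniformly over all} $\sigma\in\Sigma_i$ simultaneously, amalgamating the finitely many extensions $(q_0)_\sigma$-below into a single condition that still satisfies $(q_{i+1},n_{i+1})\le_{F_i}(q_i,n_i)$, and then ensuring the resulting condition can be re-thinned to a continuous one without destroying the decisions already made. Coordinating the growth of the fusion parameters $(F_i,n_i)$ with the heights at which $\dot x$ is decided, so that at the limit the function $F$ reads $\dot x$ continuously and the support of $q$ equals $\bigcup_i F_i$, is where the argument requires care; everything else is a routine application of the quoted fusion and determinacy lemmas.
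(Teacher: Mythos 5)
The paper never proves this lemma; it is quoted as a well-known fact (the paragraph preceding it says exactly that), so there is no proof of record to compare yours against, and it must be judged on its own merits. On those merits, your outline is the standard fusion argument and most of it is sound: pass to a continuous condition, fuse so that below each $(q_i)_\sigma$ with $\sigma\in\Sigma_i$ a longer piece of $\dot x$ is decided, define $F$ from these decisions, and get continuity because each membership question depends on finitely many coordinates truncated at a finite level. The amalgamation bookkeeping you flag as the main obstacle is indeed routine; the paper carries out exactly this kind of enumeration-and-amalgamation over all $\sigma\in\Sigma_i$ in its subsequent lemmas (e.g.\ the lemma producing decisive pairs), and the fusion clauses $(q_{i+1},n_{i+1})\leq_{F_i}(q_i,n_i)$ are precisely what keeps the consistent $\sigma$'s stable. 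One small remark: if each $q_i$ is kept $(F_i,n_i)$-determined, the fusion limit is continuous by definition, so no ``further thinning'' is needed; thinning after the decisions are made would force you to recheck how the decisions sit over the new, smaller branch space.

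There is, however, one genuine gap: as described, your $F$ need not take values in $[\N]^{\omega}$. Deciding $\dot x\cap n$ for larger and larger $n$ guarantees an infinite union of decided bits only along the \emph{generic} branch; your closing sentence (``$q$ forces $\dot x\in[\omega]^{\omega}$ so the output is genuinely infinite'') establishes infiniteness only of $F(\overline{s}_{gen}\upharpoonright supp(q))$. But $[q]$ is compact and contains many non-generic branches, for instance ground-model ones, and along such a branch every decided initial segment can be empty. Concretely, for a single Sacks coordinate let $n_0$ be the least $n\geq1$ with $s_{gen}(n)=1$ (forced to exist, since $s_{gen}\notin V$) and let $\dot x$ be the set of positive multiples of $n_0$; then any tree with stem $0^{m}$ decides $\dot x\cap m=\emptyset$, and the fusion can keep the branch $0^{\omega}$ alive at every stage while making exactly such decisions, so the limit satisfies $F(0^{\omega})=\emptyset$. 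The fix is standard but must be built into the construction: at stage $i$, below each $(q_i)_\sigma$, decide not merely $\dot x\cap n$ for some $n$, but the first $i$ elements of $\dot x$ --- that is, pin down $k_1<\dots<k_i$ forced to be the $i$ least elements of $\dot x$ and decide $\dot x\cap(k_i+1)$ exactly. Then \emph{every} branch of $[q]$, generic or not, accumulates at least $i$ elements by stage $i$, so $F$ genuinely maps into $[\N]^{\omega}$, which is what the statement (and its later uses, where the images $F[[p']]$ must consist of infinite sets) requires.
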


We will need the following notion:

\begin{definition}
Let $\mathcal{C},\mathcal{D}$ be two subfamilies of $\wp\left(  \N\right)
.$ We say that the pair $\left(  \mathcal{C},\mathcal{D}\right)  $ \emph{is
decisive }if one of the following two conditions hold:

\begin{enumerate}
\item Either $c\cap d$ is infinite for every $c\in\mathcal{C}$ and
$d\in\mathcal{D}$ or

\item $c\cap d$ is finite for every $c\in\mathcal{C}$ and $d\in\mathcal{D}.$
\end{enumerate}
\end{definition}

Note that if the second alternative holds and $\mathcal{C}$ and $\mathcal{D}$
are both compact, then there is an $m$ such that $c\cap d\subseteq m$ for
every $c\in\mathcal{C}$ and $d\in\mathcal{D}.$

\begin{lemma}
Let $p,q$ be two continous conditions in $\mathbb{S}_{\alpha}$ such that
$supp\left(  p\right)  =supp\left(  q\right)  $ and $F:\left(  2^{\omega
}\right)  ^{supp\left(  p\right)  }\longrightarrow\left[  \N\right]
^{\omega}$ a continuous function. There are $p^{\prime},q^{\prime}\in
\mathbb{S}_{\alpha}$ such that the following holds:

\begin{enumerate}
\item $p^{\prime}\leq p$ and $q^{\prime}\leq q.$

\item $supp\left(  p\right)  =supp\left(  q\right)  =supp\left(  p^{\prime
}\right)  =supp\left(  q^{\prime}\right)  .$

\item The pair $\left(  F\left[  \left[  p^{\prime}\right]  \right]  ,F\left[
\left[  q^{\prime}\right]  \right]  \right)  $ is decisive.
\end{enumerate}
\end{lemma}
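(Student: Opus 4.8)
The plan is to prove a dichotomy: either the \emph{finite} alternative of decisiveness is already attainable, in which case we are immediately done, or else the \emph{infinite} alternative can be forced by a simultaneous fusion of $p$ and $q$. Concretely, consider first whether there exist continuous conditions $p'\leq p$ and $q'\leq q$ with $supp(p')=supp(q')=supp(p)$ such that $F(\bar y)\cap F(\bar z)$ is finite for every $\bar y\in[p']$ and every $\bar z\in[q']$. If such a pair exists we are finished, since this is exactly the second clause in the definition of decisive; moreover, as $[p']$ and $[q']$ are compact and $F$ is continuous, $F[[p']]$ and $F[[q']]$ are compact, so the intersections are even uniformly bounded by some $m$.

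So assume no such pair exists. Then, equivalently, \textbf{for every} pair of continuous conditions $p''\leq p$, $q''\leq q$ with support $supp(p)$ there are $\bar y\in[p'']$ and $\bar z\in[q'']$ with $F(\bar y)\cap F(\bar z)$ infinite. Under this standing assumption I would build fusion sequences $p=p_0\geq p_1\geq\cdots$ and $q=q_0\geq q_1\geq\cdots$, together with representations, finite sets $F_i\subseteq supp(p)$ and integers $n_i$ witnessing $(p_{i+1},n_{i+1})\leq_{F_i}(p_i,n_i)$ and the analogous inequality for the $q_i$, maintaining $supp(p_i)=supp(q_i)=supp(p)$ throughout. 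At stage $i$ the sets $[p_i]$ and $[q_i]$ decompose into the finitely many clopen pieces $[(p_i)_\sigma]$ for $\sigma\in\Sigma_i$ and $[(q_i)_\tau]$ for $\tau\in\Pi_i$. Processing the finitely many pairs $(\sigma,\tau)$ one at a time, I apply the standing assumption to the continuous conditions $(p_i)_\sigma$ and $(q_i)_\tau$ (both of support $supp(p)$) to obtain $\bar y_0\in[(p_i)_\sigma]$ and $\bar z_0\in[(q_i)_\tau]$ with $F(\bar y_0)\cap F(\bar z_0)$ infinite, and I pick a common element $l_{\sigma,\tau}$ larger than every integer used so far. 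Since $\{\bar y\mid l_{\sigma,\tau}\in F(\bar y)\}$ is clopen (continuity of $F$ together with the fact that membership of a fixed integer in a subset of $\N$ is a clopen condition) and contains $\bar y_0$, I may shrink the $\sigma$-piece of $p_i$ to a clopen subpiece on which $l_{\sigma,\tau}\in F(\cdot)$ holds identically, and likewise shrink the $\tau$-piece of $q_i$. These shrinkings affect disjoint parts of the trees for distinct $\sigma$ (respectively $\tau$), amount to extending stems inside existing coordinates, hence preserve the support, and can be arranged to respect $\leq_{F_i}$. After treating all pairs I pass to $p_{i+1},q_{i+1}$, also guaranteeing the new splitting required above level $(F_i,n_i)$.

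Let $p'$ and $q'$ be the fusion limits, which are continuous conditions of support $supp(p)$ below $p$ and $q$. Given any $\bar y\in[p']$ and $\bar z\in[q']$ and any $i$, the branch $\bar y$ lies in the level-$i$ piece indexed by some $\sigma$ and $\bar z$ in the piece indexed by some $\tau$; by construction $l_{\sigma,\tau}\in F(\bar y)\cap F(\bar z)$, and since the integers chosen at successive stages tend to infinity, $F(\bar y)\cap F(\bar z)$ is infinite. Thus $(F[[p']],F[[q']])$ satisfies the first clause of decisiveness, completing this case. The main obstacle is precisely the bookkeeping inside the fusion: one must check that forcing a prescribed large integer into $F$ of an entire clopen piece is always realizable by a legitimate Sacks shrinking that keeps the condition continuous, keeps the support equal to $supp(p)$, respects the fusion order $\leq_{F_i}$, and does not spoil the guarantees obtained for the other piece-pairs at the same level. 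All of this rests on the clopenness of the relations $l\in F(\bar y)$ and on the compactness of the pieces $[(p_i)_\sigma]$ and $[(q_i)_\tau]$.
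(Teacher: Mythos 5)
Your proof is correct and follows essentially the same route as the paper's: the same dichotomy (either some pair of subconditions already realizes the finite alternative, or else every pair of subconditions with the right supports admits branches whose $F$-images share arbitrarily large elements), followed by the same simultaneous fusion in which, at each level, every pair of consistent $(\sigma,\tau)$ is handled by fixing a fresh large common value of $F$ on clopen pieces, using continuity of $F$ and compactness of the pieces. The only cosmetic difference is that the paper's first case demands a uniform bound $m$ on the intersections rather than mere finiteness (your aside deriving such a bound from compactness is not needed for the lemma as stated, since decisiveness only requires finiteness of all intersections); both dichotomies are exhaustive and both negations suffice to drive the fusion, so the two arguments coincide in substance.
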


\begin{proof}
We proceed by cases, the first case is that there are $p^{\prime}\leq p,$ $q^{\prime}\leq q$ with $supp\left(  p\right)
=supp\left(  q\right)  =supp\left(  p^{\prime}\right)  =supp\left(  q^{\prime
}\right)  $ and $m\in\N$ such $F\left(  \overline{y}\right)  \cap F\left(
\overline{z}\right)  \subseteq m$ for every $\overline{y}\in\left[  p^{\prime
}\right]  $ and $\overline{z}\in\left[  q^{\prime}\right]  .$

In this case it is clear that the pair $\left(  F\left[  \left[  p^{\prime
}\right]  \right]  ,F\left[  \left[  q^{\prime}\right]  \right]  \right)  $ is decisive. The second case is that for every $p^{\prime}\leq p,$ $q^{\prime}\leq q$ with $supp\left(  p\right)
=supp\left(  q\right)  =supp\left(  p^{\prime}\right)  =supp\left(  q^{\prime
}\right)  $ and $m\in\N$ there are $\overline{y}\in\left[  p^{\prime
}\right]  ,$ $\overline{z}\in\left[  q^{\prime}\right]  $ and $k>m$ such that
$k\in F\left(  \overline{y}\right)  \cap F\left(  \overline{z}\right)  .$

Let $supp\left(  p\right)  =\left\{  \alpha_{n}\mid n\in\N\right\}  .$ We
will now recursively build the two sequences $\left\{  \left(  p^{n},m_{n}%
,F_{n}\right)  \mid n\in\N\right\}  $ and $\left\{  \left(  q^{n}%
,k_{n},G_{n}\right)  \mid n\in\N\right\}  $ such that for every
$n\in\N$ the following holds:

\begin{enumerate}
\item $p^{0}=p$ and $q^{0}=q,$ $F_{0}=G_{0}=\emptyset.$

\item $F_{n}\in\left[  supp\left(  p\right)  \right]  ^{<\omega},$
$F_{n}\subseteq F_{n+1}$ and $\alpha_{n}\in F_{n+1}.$

\item $G_{n}\in\left[  supp\left(  q\right)  \right]  ^{<\omega},$
$G_{n}\subseteq G_{n+1}$ and $\alpha_{n}\in G_{n+1}.$

\item $m_{0}=k_{0}=0.$

\item $m_{n}<m_{n+1}$ and $k_{n}<k_{n+1}.$

\item $p^{n}$ and $q^{n}$ are continous conditions.

\item $supp\left(  p^{n}\right)  =supp\left(  q^{n}\right)  =supp\left(
p\right)  .$

\item $\left(  p^{n+1},m_{n+1}\right)  \leq_{F_{n+1}}\left(  p^{n}%
,m_{n}\right)  $ and $\left(  q^{n+1},k_{n+1}\right)  \leq_{G_{n+1}}\left(
q^{n},k_{n}\right)  .$

\item $p^{n+1}$ is $\left(  F_{n+1},m_{n+1}\right)  $-determined and $q^{n+1}$
is $\left(  G_{n+1},k_{n+1}\right)  $-determined.

\item For every $\sigma:F_{n}\longrightarrow2^{m_{n}}$ and $\tau
:G_{n}\longrightarrow2^{k_{n}}$ if $\sigma$ is consistent with $p^{n}$ and
$\tau$ is consistent with $q^{n}$ then there is $l>n$ such that $l\in F\left(
\overline{y}\right)  \cap F\left(  \overline{z}\right)  $ for every
$\overline{y}\in\left[  p_{\sigma}^{n}\right]  $ and $\overline{z}\in\left[
q_{\tau}^{n}\right]  .$
\end{enumerate}

Assume we are at step $n+1.$ Since both $p^{n}$ and $q^{n}$ are continuous
conditions, we can find $F_{n+1},G_{n+1},m_{n+1}$ and $k_{n+1}$ with the
following properties:

\begin{enumerate}
\item $F_{n}\cup\left\{  \alpha_{n}\right\}  \subseteq F_{n+1}$ and $G_{n}%
\cup\left\{  \alpha_{n}\right\}  \subseteq G_{n+1}.$

\item $m_{n}<m_{n+1},$ $k_{n}<k_{n+1}.$

\item $p^{n}$ is $\left(  F_{n+1},m_{n+1}\right)  $-determined and $q^{n}$ is
$\left(  G_{n+1},k_{n+1}\right)  $-determined.
\end{enumerate}

Let $W=\left\{  \left(  \sigma_{i},\tau_{i}\right)  \right\}  _{i<u}$
enumerate all pairs $\left(  \sigma,\tau\right)  $ for which $\sigma
:F_{n+1}\longrightarrow2^{m_{n+1}}$ and $\tau:G_{n+1}\longrightarrow
2^{k_{n+1}}.$ We recursively find a sequence $\left\{  \left(  p_{1}^{i}%
,q_{1}^{i}\right)  \mid i<u+1\right\}  $ such that for every $i<u$ the
following holds:

\begin{enumerate}
\item $p^{n}=p_{1}^{0}$ and $q^{n}=q_{1}^{0}.$

\item $\left(  p_{1}^{i+1},m_{n+1}\right)  \leq_{F_{n+1}}\left(  p_{1}%
^{i},m_{n+1}\right)  $ and $\left(  q_{1}^{i+1},k_{n+1}\right)  \leq_{G_{n+1}%
}\left(  q_{1}^{i},k_{n+1}\right)  .$

\item $p_{1}^{i}$ and $q_{1}^{i}$ are continous.

\item $supp\left(  p_{1}^{i}\right)  =supp\left(  q_{1}^{i}\right)  =supp\left(
p\right)  .$

\item If $\sigma_{i}$ is consistent with $p_{1}^{i+1}$ and $\tau_{i}$ is
consistent with $q_{1}^{i+1}$ then there is $l>n$ such that $l\in F\left(
\overline{y}\right)  \cap F\left(  \overline{z}\right)  $ for every
$\overline{y}\in\lbrack\left(  p_{1}^{i+1}\right)  _{\sigma_{i}}]$ and
$\overline{z}\in\lbrack\left(  q_{1}^{i+1}\right)  _{\tau_{i}}].$
\end{enumerate}

Assume we are at step $i.$ In case either $\sigma_{i}$ is not consistent with
$p_{1}^{i}$ or $\tau_{i}$ is not consistent with $q_{1}^{i}$ we simply define
$p_{1}^{i+1}=p_{1}^{i}$ and $q_{1}^{i+1}=q_{1}^{i}.$ Assume $\sigma_{i}$ is
consistent with $p_{1}^{i}$ and $\tau_{i}$ is consistent with $q_{1}^{i}.$ By
the hypothesis, there are $l>n,$ $\overline{y}\in\lbrack\left(  p_{1}%
^{i}\right)  _{\sigma_{i}}]$ and $\overline{z}\in\lbrack\left(  q_{1}%
^{i}\right)  _{\tau_{i}}]$ such that $k\in F\left(  \overline{y}\right)  \cap
F\left(  \overline{z}\right)  .$ Since $F$ is a continous function, we can
find $p_{1}^{i+1}$ and $q_{1}^{i+1}$ with the following properties:

\begin{enumerate}
\item $\sigma_{i}$ is consistent with $p_{1}^{i+1}.$

\item $\tau_{i}$ is consistent with $q_{1}^{i+1}.$

\item For every $\overline{y_{1}}\in\lbrack\left(  p_{1}^{i+1}\right)
_{\sigma_{i}}]$ and $\overline{z_{1}}\in\lbrack\left(  q_{1}^{i+1}\right)
_{\tau_{i}}]$ it is the case that $k\in F\left(  \overline{y}\right)  \cap
F\left(  \overline{z}\right)  .$

\item $p_{1}^{i+1}$ and $q_{1}^{i+1}$ are continous.

\item $\left(  p_{1}^{i+1},m_{n+1}\right)  \leq_{F_{n+1}}\left(  p_{1}%
^{i},m_{n+1}\right)  $ and $\left(  q_{1}^{i+1},k_{n+1}\right)  \leq_{F_{n}%
}\left(  q_{1}^{i},k_{n+1}\right)  .$

\item $supp\left(  p_{1}^{i}\right)  =supp\left(  q_{1}^{i}\right)  =supp\left(
p\right)  .$
\end{enumerate}

We then define $p^{n+1}=p_{1}^{u+1}$ and $q^{n+1}=q_{1}^{u+1}.$

Let $p^{\prime}$ and $q^{\prime}$ be the respective fusion sequences. It is
easy to see that $F\left[  \overline{c}\right]  \cap F\left[  \overline
{e}\right]  $ is infinite for every $\overline{c}\in\left[  p^{\prime}\right]
$ and $\overline{e}\in\left[  q^{\prime}\right]  .$
\end{proof}

Note that if $p$ is continous and $\beta=min\left(  supp\left(  p\right)
\right)  $ then we may assume that $p\left(  \beta\right)  $ is a real Sacks
tree (not only a name).

\begin{proposition}
Let $p\in\mathbb{S}_{\alpha}$ be a continuous condition, $F:\left[  p\right]
\longrightarrow\left[  \N\right]  ^{\omega}$ a continuous function and
$\beta=min\left\{  supp\left(  \alpha\right)  \right\}  .$ Then there are $q$
$\in\mathbb{S}_{\alpha}$ with representation $\left\{  \left(  F_{i}%
,m_{i},\Sigma_{i}\right)  \mid i\in\N\right\}  $ such that the following holds:

\begin{enumerate}
\item $q\leq p.$

\item $supp\left(  q\right)  =supp\left(  p\right)  .$

\item $F_{0}=\left\{  \beta\right\}  .$

\item For every $i\in\N$ the following holds: for every $\sigma,\tau
\in\Sigma_{i}$ such that $\sigma\left(  \beta\right)  \neq\tau\left(
\beta\right)  ,$ the pair $\left(  F\left[  \left[  q_{\sigma}\right]
\right]  ,F\left[  \left[  q_{\tau}\right]  \right]  \right)  $ is decisive.
\end{enumerate}
\end{proposition}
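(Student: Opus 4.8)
The plan is to run a fusion argument that mirrors the proof of the preceding lemma, but applied simultaneously to all pairs of pieces of a single condition rather than to two externally given conditions. I start by choosing $m_0$ so that $p$ is $(\{\beta\},m_0)$-determined; this is automatic because $\beta=min(supp(p))$ and $p(\beta)$ is a genuine Sacks tree, so consistency of $\sigma\colon\{\beta\}\to2^{m_0}$ merely records whether $\sigma(\beta)\in p(\beta)$. Enumerate $supp(p)=\{\alpha_n\mid n\in\N\}$ with $\alpha_0=\beta$, and build a fusion sequence $\{(p^n,F_n,m_n)\mid n\in\N\}$ with $p^0\le p$, $F_0=\{\beta\}$, $\bigcup_n F_n=supp(p)$, $(p^{n+1},m_{n+1})\le_{F_n}(p^n,m_n)$, each $p^n$ continuous with $supp(p^n)=supp(p)$ and $(F_n,m_n)$-determined. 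Let $q$ be the fusion; its associated representation $\{(F_i,m_i,\Sigma_i)\mid i\in\N\}$ will witness the proposition, so that (1)--(3) are built in.

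The heart of the construction is the inductive step, where I guarantee (4) at level $n$. Given $p^n$, I use continuity to pass to $F_{n+1}\supseteq F_n\cup\{\alpha_n\}$ and $m_{n+1}>m_n$ with $p^n$ being $(F_{n+1},m_{n+1})$-determined and with enough extra splitting on $F_n$ that $(p^{n+1},m_{n+1})\le_{F_n}(p^n,m_n)$ can be met. Now I enumerate all pairs $(\sigma,\tau)$ of assignments $F_{n+1}\to2^{m_{n+1}}$ that are consistent with $p^n$ and satisfy $\sigma(\beta)\ne\tau(\beta)$, and process them one at a time exactly as in the preceding lemma: at each substep I apply that lemma to the $\sigma$-piece and the $\tau$-piece of the condition refined so far — both continuous of support $supp(p)$ — to refine them into pieces whose $F$-images form a decisive pair, and then amalgamate the two refined pieces back into the condition. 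Each amalgamation keeps the level $(F_{n+1},m_{n+1})$ fixed, so I only ever shrink inside a piece and never alter the labeling $\Sigma_{n+1}$; this is legitimate because distinct $\sigma,\tau$ already separate at coordinate $\beta$ and thus index disjoint subtrees that glue without conflict. Setting $p^{n+1}$ to be the outcome after all pairs are handled completes the step.

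Two observations make this work. First, decisiveness is monotone under shrinking either coordinate: if $(F[[q_\sigma]],F[[q_\tau]])$ is decisive and $[q'_\sigma]\subseteq[q_\sigma]$, then $(F[[q'_\sigma]],F[[q_\tau]])$ is still decisive — in the finite case the uniform bound survives, and in the infinite case every branch of $[q'_\sigma]$ is already a branch of $[q_\sigma]$. Hence refining the $\sigma$-piece while handling a later pair $(\sigma,\tau')$ cannot destroy decisiveness secured for an earlier pair $(\sigma,\tau)$, and after all pairs are processed every pair at level $(F_{n+1},m_{n+1})$ is decisive. Second, because every later step refines only inside pieces and keeps the level-$m_i$ tree on $F_i$ fixed, for every $i$ and every $\sigma\in\Sigma_i$ one has $[q_\sigma]\subseteq[(p^i)_\sigma]$; since the pair $(F[[(p^i)_\sigma]],F[[(p^i)_\tau]])$ was made decisive at stage $i$, monotonicity yields that $(F[[q_\sigma]],F[[q_\tau]])$ is decisive for all $\sigma,\tau\in\Sigma_i$ with $\sigma(\beta)\ne\tau(\beta)$, which is exactly (4).

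The main obstacle is the amalgamation of the separately refined pieces into a single legitimate condition of the iteration $\mathbb{S}_\alpha$ while keeping the level-$(F_{n+1},m_{n+1})$ structure intact; in the pure Sacks case this is routine gluing of disjoint subtrees, but in the iterated setting one must check that replacing $p^n$ coordinatewise, piece by piece, still yields a condition below $p^n$. This is ensured by the fact that the pieces indexed by distinct consistent $\sigma$ are genuinely disjoint already at the first coordinate $\beta$, so no two refinements compete over the same part of the generic, and by using only refinements of the form produced by the preceding lemma, which preserve supports. Everything else — the fusion bookkeeping and the verification of (1)--(3) — is standard.
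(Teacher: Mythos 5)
Your proposal follows essentially the same route as the paper's own proof: a fusion sequence whose inductive step enumerates all pairs of cells that separate at the coordinate $\beta$, applies the preceding two-condition lemma to each pair in turn, and amalgamates the refined pieces, with decisiveness surviving to later stages and to the fusion because it is monotone under shrinking the families. The only point where the paper is more explicit is the amalgamation itself, which it carries out by gluing the refined trees at coordinate $\beta$ and taking, above $\beta$, a name that case-splits according to which glued part the generic enters --- precisely the disjointness-at-$\beta$ mechanism you invoke.
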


\begin{proof}
Let $supp\left(  p\right)  =\left\{  \alpha_{n}\mid n\in\N\right\}  $ with
$\alpha_{0}=\beta$ $.$ We recursively build a sequence $\{(p^{n},m_{n}%
,F_{n})\mid n\in\N\}$ with the following properties:

\begin{enumerate}
\item $p^{0}=p.$

\item $F_{0}=\left\{  \beta\right\}  $ and $m_{0}=0.$

\item Each $p^{n}$ is continuous and $supp\left(  p^{n}\right)  =supp\left(
p\right)  .$

\item $F_{n}\in\left[  supp\left(  p\right)  \right]  ^{<\omega}$ and
$\alpha_{n}\in F_{n}.$

\item $\left(  p^{n+1},m_{n+1}\right)  $ $\leq_{F_{n}}\left(  p^{n}%
,m_{n}\right)  .$

\item $m_{n}<m_{n+1}.$

\item For every $\sigma,\tau:F_{n}\longrightarrow2^{m_{n}}$ such that
$\sigma\left(  \beta\right)  \neq\tau\left(  \beta\right)  $ and both are
consistent with $p^{n}$, the pair $\left(  F\left[  \left[  p_{\sigma}%
^{n}\right]  \right]  ,F\left[  \left[  p_{\tau}^{n}\right]  \right]  \right)
$ is decisive.
\end{enumerate}

Assume we are at step $n.$ We first find $F_{n+1}$ and $m_{n+1}>m_{n}$ such
that $F_{n}\cup\left\{  \alpha_{n}\right\}  \subseteq F_{n+1}$ and $p^{n}$ is
$\left(  F_{n+1},m_{n+1}\right)  $-determined. Let $W$ be the set of all pairs
$\left(  \sigma,\tau\right)  $ such that $\sigma,\tau:F_{n+1}\longrightarrow
2^{m_{n+1}},$ $\sigma\left(  \beta\right)  \neq\tau\left(  \beta\right)  $ and
both are consistent with $p^{n}.$ Enumerate $W=\left\{  \left(  \sigma
_{i},\tau_{i}\right)  \mid i\leq l\right\}  .$ We recursively build $\left\{
q_{i}\mid i\leq l\right\}  $ with the following properties:

\begin{enumerate}
\item Each $q_{i}$ is $\left(  F_{n+1},m_{n+1}\right)  $-determined and continuous.

\item $supp\left(  q_{i}\right)  =supp\left(  p\right)  .$

\item $\left(  q_{0},m_{n}\right)  \leq_{F_{n+1}}\left(  p^{n},m_{n+1}\right)
.$

\item $\left(  q_{i+1},m_{n+1}\right)  \leq_{F_{n}}\left(  q_{i}%
,m_{n+1}\right)  $ for $i<l.$

\item For each $i\leq l$ one of the following conditions hold:

\begin{enumerate}
\item Either $\sigma_{i}$ or $\tau_{i}$ is not consistent with $q_{i}$ or

\item the pair $(F[[\left(  q_{i}\right)  _{\sigma_{i}}]],F[[\left(
q_{i}\right)  _{\tau_{i}}]])$ is decisive.
\end{enumerate}
\end{enumerate}

Assume we are at step $i<l.$ In case that $\sigma_{i+1}$ or $\tau_{i+1}$ is
not consistent with $q_{i}$ we simply define $q_{i+1}=q_{i}.$ We now assume
both $\sigma_{i+1}$ and $\tau_{i+1}$ are consistent with $q_{i}.$ By applying
the previous lemma to $\left(  q_{i}\right)  _{\sigma_{i}}$ and $\left(
q_{i}\right)  _{\tau_{i}}$ we obtain $r_{0},$ $r_{1}$ continous conditions
with the following properties:

\begin{enumerate}
\item $r_{0}\leq\left(  q_{i}\right)  _{\sigma_{i}}.$

\item $r_{1}\leq\left(  q_{i}\right)  _{\tau_{i}}.$

\item $supp\left(  r_{0}\right)  =supp\left(  r_{1}\right)  =supp\left(
p\right)  .$

\item the pair $(F[[r_{0}]],F[[r_{1}]])$ is decisive.
\end{enumerate}

We now define the $r$ to be a Sacks tree with the following properties:

\begin{enumerate}
\item $r_{\sigma_{i+1}\left(  0\right)  }=r_{0}\left(  \beta\right)  .$

\item $r_{\tau_{i+1}\left(  0\right)  }=r_{1}\left(  \beta\right)  .$

\item $r_{s}=q_{i}\left(  \beta\right)  _{s}$ for $s\in q_{i}\left(
\beta\right)  _{m_{n}}$ and $s\notin\left\{  \sigma_{i+1}\left(  \beta\right)
,\tau_{i+1}\left(  \beta\right)  \right\}  .$
\end{enumerate}

Let $\dot{u}$ be a $\mathbb{S}$-name with the following properties:

\begin{enumerate}
\item $r_{0}\upharpoonright\left(  \beta+1\right)  \Vdash\dot{u}%
=\left\langle r_{0}\left(  \xi\right)  \right\rangle _{\xi>\beta
}.$

\item $r_{1}\upharpoonright\left(  \beta+1\right)  \Vdash\dot{u}%
=\left\langle r_{1}\left(  \xi\right)  \right\rangle _{\xi>\beta
}.$

\item $r^{\prime}\Vdash``\dot{u}=\left\langle q_{i}\left(  \xi\right)
\right\rangle _{\xi>\beta}\textquotedblright$ for every $r^{\prime}\leq
r\upharpoonright\left(  \beta+1\right)  $ that is incompatible with both
$r_{0}\upharpoonright\left(  \beta+1\right)  $ and $r_{1}\upharpoonright
\left(  \beta+1\right)  .$
\end{enumerate}

Let $q_{i+1}=$ $r^{\frown}\dot{u}.$ It is easy to see that $q_{i+1}$ has the
desired properties. Finally, we define $p^{n+1}=q_{l}.$ The fusion has the
desired properties.
\end{proof}

Let $a$ be a countable subset of $\omega_{2}$. We can define $\mathbb{S}_{a}$
as a countable support iteration of Sacks forcing with domain $a.$ Clearly,
$\mathbb{S}_{a}$ is isomorphic to $\mathbb{S}_{\delta}$ where $\delta$ is the
order type of $a.$ Note that if $p\in\mathbb{S}_{\omega_{2}}$ is a continuous
condition, then it can be seen as a condition of $\mathbb{S}_{supp\left(
p\right)  }.$ With this remark, it is easy to prove the following:

\begin{proposition}
Let $p\in\mathbb{S}_{\alpha}$ be a continuous condition that has a
representation $\left\{  \left(  F_{i},n_{i},\Sigma_{i}\right)  \mid
i\in\N\right\}  $ and $F:\left[  p\right]  \longrightarrow\left[
\N\right]  ^{\omega}$ a continuous function. Let $\alpha^{\ast}$ be the
order type of $supp\left(  p\right)  $ and $\pi:supp\left(  p\right)
\longrightarrow\alpha^{\ast}$ be the (unique) order isomorphism. There are
$q\in\mathbb{S}_{\alpha^{\ast}}$ and$\ $a continuous function $H:\left[
q\right]  \longrightarrow\left[  \N\right]  ^{\omega}$ with the following properties:

\begin{enumerate}
\item $supp\left(  q\right)  =\alpha^{\ast}.$

\item The set $\left\{  \left(  \pi\left[  F_{i}\right]  ,n_{i},\pi\Sigma_{i}\right)
\mid i\in\N\right\}  $ is a representation of $q$ (where $\pi\Sigma
_{i}=\left\{  \pi\sigma\mid\sigma\in\Sigma_{i}\right\}  $).

\item If $\overline{\pi}:\left(  2^{\omega}\right)  ^{supp\left(  p\right)
}\longrightarrow\left(  2^{\omega}\right)  ^{\alpha^{\ast}}$ denotes the
natural homeomorphism induced by $\pi.$ then $\overline{\pi}\upharpoonright
\left[  p\right]  $ is an homeomorphism and $F=H\overline{\pi}.$
\end{enumerate}
\end{proposition}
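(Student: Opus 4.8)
The plan is to realize $q$ as the image of $p$ under the forcing isomorphism induced by the order isomorphism $\pi$, and then transport $F$ across the induced homeomorphism of product spaces; everything reduces to checking that this change of variables preserves the structural data. Following the remark preceding the statement, we regard the continuous condition $p$ as an element of $\mathbb{S}_{supp(p)}$, the countable support iteration of Sacks forcing indexed by the set $supp(p)$. Since $\pi:supp(p)\rightarrow\alpha^{\ast}$ is an order isomorphism, it induces, by recursion along the iteration, a forcing isomorphism $\widetilde{\pi}:\mathbb{S}_{supp(p)}\rightarrow\mathbb{S}_{\alpha^{\ast}}$ (this is exactly the isomorphism $\mathbb{S}_{a}\cong\mathbb{S}_{\delta}$ recorded above): at a coordinate $\beta\in supp(p)$ one sends the $\mathbb{S}_{supp(p)\cap\beta}$-name $p(\beta)$ for a Sacks tree to the corresponding $\mathbb{S}_{\pi[supp(p)\cap\beta]}$-name obtained by applying the portion of $\widetilde{\pi}$ already defined below $\beta$. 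Put $q=\widetilde{\pi}(p)$. As $\pi$ is a bijection of $supp(p)$ onto $\alpha^{\ast}$, we have $supp(q)=\pi[supp(p)]=\alpha^{\ast}$, giving (1), and $q$ is continuous because continuity is a structural property preserved by $\widetilde{\pi}$.

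Next I would verify that $\widetilde{\pi}$ preserves the combinatorial data entering a representation. The key points are that $\widetilde{\pi}$ commutes with the restriction operation $r\mapsto r_{\sigma}$ and preserves the forcing relation: for finite $F\subseteq supp(p)$ and $\sigma:F\rightarrow2^{n}$, writing $\pi\sigma$ for the reindexed map with domain $\pi[F]$ and $(\pi\sigma)(\pi(\beta))=\sigma(\beta)$, one has $\widetilde{\pi}(p_{\sigma})=q_{\pi\sigma}$, so $\sigma$ is consistent with $p$ if and only if $\pi\sigma$ is consistent with $q$. Because $\pi$ is order preserving, the clause defining $(F,n)$-determinedness at a coordinate $\beta$ (involving $F\cap\beta$, $p\upharpoonright\beta$ and the statement $\Vdash\sigma(\beta)\notin p(\beta)$) transfers verbatim to the coordinate $\pi(\beta)$; hence $p$ being $(F_{i},n_{i})$-determined yields that $q$ is $(\pi[F_{i}],n_{i})$-determined, and $\pi\Sigma_{i}$ is precisely the set of $\tau:\pi[F_{i}]\rightarrow2^{n_{i}}$ consistent with $q$. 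The remaining clauses are immediate: $\pi[F_{i}]\subseteq\pi[F_{i+1}]$, the $n_{i}$ are unchanged, and $\bigcup_{i}\pi[F_{i}]=\pi[supp(p)]=\alpha^{\ast}$. This proves (2).

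For (3), the map $\overline{\pi}:(2^{\omega})^{supp(p)}\rightarrow(2^{\omega})^{\alpha^{\ast}}$ is the coordinate relabeling $\langle y_{\beta}\rangle_{\beta\in supp(p)}\mapsto\langle y_{\pi^{-1}(\gamma)}\rangle_{\gamma\in\alpha^{\ast}}$, which is a homeomorphism of product spaces. Comparing the definitions of $[p]$ and $[q]$ through the representations $\{(F_{i},n_{i},\Sigma_{i})\mid i\in\N\}$ and $\{(\pi[F_{i}],n_{i},\pi\Sigma_{i})\mid i\in\N\}$, and using that $\sigma\in\Sigma_{i}$ iff $\pi\sigma\in\pi\Sigma_{i}$, one reads off directly that $\overline{\pi}[[p]]=[q]$; thus $\overline{\pi}\upharpoonright[p]$ is a homeomorphism of $[p]$ onto $[q]$. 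Finally set $H=F\circ(\overline{\pi}\upharpoonright[p])^{-1}:[q]\rightarrow[\N]^{\omega}$. As a composition of continuous functions $H$ is continuous, and by construction $F=H\overline{\pi}$, as required.

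The only genuinely delicate point is the construction of the forcing isomorphism $\widetilde{\pi}$ and the verification that it commutes with $r\mapsto r_{\sigma}$ and preserves forcing, since conditions in the iteration involve names and this must be argued by recursion on the length, checking at each stage that $\widetilde{\pi}$ carries $\mathbb{S}_{supp(p)\cap\beta}$-names faithfully to $\mathbb{S}_{\pi[supp(p)\cap\beta]}$-names. Once $\widetilde{\pi}$ is in place, conditions (1) and (3) and the covering clauses of (2) are pure bookkeeping.
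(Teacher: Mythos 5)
Your proof is correct and is exactly the argument the paper intends: the proposition is stated there without proof, as an ``easy'' consequence of the preceding remark that a continuous condition $p$ may be viewed as an element of $\mathbb{S}_{supp(p)}$ and that $\mathbb{S}_{a}\cong\mathbb{S}_{\delta}$ for $\delta$ the order type of $a$. Your write-up simply fills in the bookkeeping (transfer of consistency, determinedness, and the identity $\overline{\pi}[[p]]=[q]$ with $H=F\circ(\overline{\pi}\upharpoonright[p])^{-1}$) that the paper leaves implicit.
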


We will say that $\left(  p,F\right)  $ and $\left(  q,H\right)  $ are
\emph{isomorphic }if the previous conditions hold.

\begin{theorem}\label{sacks-main}
In the Sacks model, every almost disjoint family of size $\omega_{2}$ contains an
$\R$-embedabble subfamily of size $\omega_{2}.$
\end{theorem}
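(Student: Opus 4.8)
The plan is to reduce the assertion to the sufficient tree criterion of Lemma \ref{tree-condition}: for a subfamily of size $\omega_2$ I will exhibit a single tree $T\subseteq 2^{<\omega}$, identify the subfamily with a set $Z\subseteq[T]$ of branches, and build a labelling $\{B_s\mid s\in T\}$ meeting its three hypotheses, whence $\R$-embeddability follows. All the combinatorial selection is done in the ground model $V$, where {\sf GCH} holds. Let $\dot{\mathcal A}=\{\dot A_\alpha\mid\alpha<\omega_2\}$ name the given almost disjoint family. By the continuous reading of names for Sacks forcing I fix (by choice in $V$) for each $\alpha$ a continuous condition $q_\alpha$ and a continuous $F_\alpha\colon[q_\alpha]\to[\N]^\omega$ with $q_\alpha\Vdash F_\alpha(\overline{s}_{gen}\restriction\mathrm{supp}(q_\alpha))=\dot A_\alpha$; the support $a_\alpha:=\mathrm{supp}(q_\alpha)$ is countable. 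Since {\sf GCH} gives $\omega_1^{\omega}=\omega_1<\omega_2$, the $\Delta$-system lemma yields $I_0\in[\omega_2]^{\omega_2}$ whose supports form a $\Delta$-system with countable root $R$; exactly as in the proof of Theorem \ref{cohen-dow-hart} I pass to the intermediate extension by $\mathbb{S}_R$, after which the supports $a_\alpha\setminus R$ are pairwise disjoint, {\sf CH} still holds, and each $q_\alpha$ continuously reads $\dot A_\alpha$ over this model. Normalizing $(q_\alpha,F_\alpha)$ to an isomorphic pair on $\mathbb{S}_{\mathrm{otp}(a_\alpha)}$ as described before the theorem, and using that {\sf CH} leaves only $\omega_1$ isomorphism types of continuous pairs on a countable-support iteration, I thin to $I_1\in[I_0]^{\omega_2}$ all isomorphic to a fixed $(q,H)$ on $\mathbb{S}_\delta$, $\delta$ countable. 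Finally I apply the preceding proposition to $(q,H)$, obtaining $q'\le q$ with a representation $\{(F_i,m_i,\Sigma_i)\mid i\in\N\}$, $F_0=\{0\}$, such that $\sigma,\tau\in\Sigma_i$ with $\sigma(0)\ne\tau(0)$ always make $(H[[q'_\sigma]],H[[q'_\tau]])$ decisive; transporting $q'$ back gives $q'_\alpha\le q_\alpha$ reading $\dot A_\alpha$ with $q'_\alpha(\min a_\alpha)$ the genuine Sacks tree $T:=q'(0)$.

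In the full Sacks extension $V[G]$ I set $I'=\{\alpha\in I_1\mid q'_\alpha\in G\}$. Since the $a_\alpha$ are pairwise disjoint, any condition $r$ meets only countably many of them, so for each $\gamma<\omega_2$ there is $\alpha\in I_1$ with $\alpha>\gamma$ and $a_\alpha\cap\mathrm{supp}(r)=\emptyset$; then $r$ and $q'_\alpha$ are compatible and their amalgamation forces $\alpha\in\dot I'$. Hence it is dense to force $\dot I'$ unbounded in $\omega_2$, so $|I'|=\omega_2$. For $\alpha\in I'$ let $g_\alpha=\overline{s}_{gen}(\min a_\alpha)\in[T]$ be the generic Sacks real at the least coordinate of $a_\alpha$; distinct coordinates give distinct branches, and $A_\alpha=F_\alpha(\overline{s}_{gen}\restriction a_\alpha)$ lies in $H[[q'_\sigma]]$ for the level-$i$ piece $\sigma$ determined by the generic, with $\sigma(0)=g_\alpha\restriction m_i$. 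Put $Z=\{g_\alpha\mid\alpha\in I'\}$ and prune $T$ to $T'=\{s\mid\exists\alpha\in I'\ s\subseteq g_\alpha\}$.

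The labelling is $B_s=\bigcup\{A_\alpha\mid\alpha\in I',\ s\subseteq g_\alpha\}$ for $s\in T'$. Hypotheses (1) and (3) of Lemma \ref{tree-condition} are immediate, since every $g_\alpha$ through $s$ passes through a child of $s$ in $T'$ and $A_\alpha\subseteq\bigcap_n B_{g_\alpha\restriction n}$ by construction. The crux is hypothesis (2). Fix incompatible $s\perp t$ in $T'$ and let $i$ be least with $m_i\ge\max(|s|,|t|)$. If $A_\alpha\subseteq B_s$ and $A_{\alpha'}\subseteq B_t$, then $\alpha\ne\alpha'$, and $g_\alpha\restriction m_i$, $g_{\alpha'}\restriction m_i$ are incompatible level-$i$ pieces $\sigma(0)\ne\tau(0)$, so $(H[[q'_\sigma]],H[[q'_\tau]])$ is decisive; as $A_\alpha\cap A_{\alpha'}$ is finite (almost disjointness), this pair must satisfy the \emph{finite} alternative, and since $[q'_\sigma],[q'_\tau]$ are compact and $H$ is continuous, the remark following the definition of decisiveness gives a single $m_{\sigma\tau}$ with $c\cap d\subseteq m_{\sigma\tau}$ for all $c\in H[[q'_\sigma]]$, $d\in H[[q'_\tau]]$. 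Only finitely many nodes of $T$ at level $m_i$ extend $s$ or $t$, so the maximum $M(s,t)$ of the relevant $m_{\sigma\tau}$ is finite and $B_s\cap B_t\subseteq M(s,t)$. Thus Lemma \ref{tree-condition} applies to $T'$, $Z$, $\{B_s\}$, and $\{A_\alpha\mid\alpha\in I'\}$ is an $\R$-embeddable subfamily of size $\omega_2$.

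The main obstacle is precisely this final uniformity. The decisive dichotomy is a statement about the compact images $H[[q'_\sigma]]$ that is decided already in $V$; the work is to convert it, via compactness and almost disjointness, into a \emph{single} finite bound governing the whole union $B_s\cap B_t$ rather than merely each pairwise intersection, and this is made possible only by arranging in advance, through the preceding proposition applied to the common isomorphism type, that incompatible first coordinates always yield decisive pairs.
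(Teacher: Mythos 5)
Your overall architecture is the paper's: continuous reading of names, a $\Delta$-system on supports, absorption of the root into an intermediate extension, thinning to a single isomorphism type under {\sf CH}, the decisiveness proposition keyed to the first coordinate of the support, the sets $B_s$, compactness to turn the finite alternative into a uniform bound, and a density argument producing $\omega_2$ many surviving indices, ending with (an explicit appeal to) Lemma \ref{tree-condition}, which the paper uses implicitly. The genuine gap is the step you dismiss as ``exactly as in the proof of Theorem \ref{cohen-dow-hart}'': absorbing the root $R$. For Cohen forcing this is harmless because $\mathbb{C}_{\omega_2}$ is a \emph{product}, $\mathbb{C}_{\omega_2}\cong\mathbb{C}_R\times\mathbb{C}_{\omega_2\setminus R}$, the names $\dot A_\alpha$ re-factor automatically over the intermediate model, and in that proof the reading of names is chosen \emph{after} the root is absorbed. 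Here $\SSS_{\omega_2}$ is a countable support \emph{iteration}: $\SSS_R$ is not a factor at all (one must factor at an ordinal $\delta$ with $R\subseteq\delta$, pruning so that $R=a_\alpha\cap\delta$), and, more importantly, you have already fixed conditions $q_\alpha$ with nontrivial root parts $q_\alpha\upharpoonright\delta$. Over $V[G_0]$ the tail of $q_\alpha$ is a condition that reads $\dot A_\alpha$ only when $q_\alpha\upharpoonright\delta\in G_0$; for every other $\alpha$ your data simply does not survive. The root parts are systems of names, need not be pairwise compatible, and no thinning performed in $V$ places $\omega_2$ many of them inside one generic filter. This is exactly why the paper inserts an $\omega_2$-c.c.\ argument: there is $p$ (w.l.o.g.\ $p\in\SSS_\delta$) forcing $\{\alpha\mid p_\alpha\in\dot G\}$ to have size $\omega_2$; one chooses $G_0\ni p$ and works with $W=\{\alpha\mid p_\alpha\upharpoonright\delta\in G_0\}$. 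Your proposal omits this step entirely.

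The omission propagates to two places where your text, read literally, fails when $R\neq\emptyset$. First, the density argument for $|I'|=\omega_2$ asserts ``the $a_\alpha$ are pairwise disjoint'' and deduces compatibility of $r$ with $q'_\alpha$ from disjointness of supports; but every $a_\alpha$ contains $R$, and $r$ can be incompatible with $q'_\alpha$ on the root (take $r\le q'_{\alpha_0}$ whose root part is incompatible with the other roots), so the density claim is unjustified. Second, and fatally, your branches are $g_\alpha=\overline{s}_{gen}(\min a_\alpha)$ and you assert ``distinct coordinates give distinct branches''; if $R\neq\emptyset$ then $\min a_\alpha=\min R$ is the \emph{same} coordinate for every $\alpha$, so all $g_\alpha$ coincide, $Z$ degenerates to a single branch, and injectivity of the embedding collapses. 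The decisiveness must instead be keyed to $\beta_\alpha=\min(a_\alpha\setminus R)$, which is distinct for distinct $\alpha$; this makes sense only after the roots are absorbed as above and after each $F_\alpha$ is replaced by the tail function $H_\alpha$ obtained by plugging the $G_0$-generic reals into the root coordinates --- a step the paper performs explicitly and you never do (you normalize and apply the proposition to $(q_\alpha,F_\alpha)$, whose coordinate $0$ is a root coordinate). With the paper's absorption step inserted and the proposition applied to the tails, the remainder of your argument --- the finite alternative of decisiveness via almost disjointness of two realizing conditions with disjoint supports, the compact uniform bound $m_{\sigma\tau}$, finiteness of $B_s\cap B_t$, and the appeal to Lemma \ref{tree-condition} --- is correct and essentially identical to the paper's.
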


\begin{proof}
Let $\mathcal{\dot{A}=}\left\{  \,\dot{A}_{\alpha}\mid\alpha\in\omega
_{2}\right\}  $ be a $\mathbb{S}_{\omega_{2}}$-name for an almost disjoint family.
For every $\alpha<\omega_{2}$ we choose a pair $\left(  p_{\alpha},F_{\alpha
}\right)  $ with the following properties:

\begin{enumerate}
\item $p_{\alpha}$ is a continuous condition.

\item $F_{\alpha}:\left[  p_{\alpha}\right]  \longrightarrow\left[
\N\right]  ^{\omega}$ is a continuous function.

\item $p_{\alpha}\Vdash$$F_{\alpha}\left(  \overline{r}_{gen}\upharpoonright
supp\left(  p_{\alpha}\right)  \right)  =\dot{A}_{\alpha}$
\end{enumerate}

By the $\Delta$-system lemma, we can assume that $\left\{  supp\left(  p_{\alpha
}\right)  \mid\alpha\in\omega_{2}\right\}  $ forms a delta system with root
$R\in\left[  \omega_{2}\right]  ^{\omega}.$ Let $\delta\in\omega_{2}$ such
that $R\subseteq\delta.$ By a pruning argument, we may assume that
$R=supp\left(  p_{\alpha}\right)  \cap\delta$ for every $\alpha<\omega_{2}.$
Since $\mathbb{S}_{\omega_{2}}$ has the $\omega_{2}$-chain condition, there is
$p\in\mathbb{S}_{\omega_{2}}$ such that $p$ forces that the set $\{\alpha\mid
p_{\alpha}\in\dot{G}\}$ will have size $\omega_{2}$ (where $\dot{G}$ is the
name of the generic filter). Note that we may assume that $p\in\mathbb{S}%
_{\delta}$ (by increasing $\delta$ if needed).

Let $G_{0}\subseteq\mathbb{S}_{\delta}$ be a generic filter such that $p\in
G_{0}.$ We will now work in $V\left[  G_{0}\right]  .$ Let $W=\left\{
\alpha\mid\left(  p_{\alpha}\upharpoonright\delta\right)  \in G_{0}\right\}  $
which has size $\omega_{2}$ by the nature of $p.$ For every $\alpha\in W,$ let
$p_{\alpha}^{\prime}$ be the $\mathbb{S}_{\delta}$-name such that $p_{\alpha
}=\left(  p_{\alpha}\upharpoonright\delta\right)  ^{\frown}p_{\alpha}^{\prime
}.$ Note that we may view each $p_{\alpha}^{\prime}\left[  G_{0}\right]  $ as
a condition of $\mathbb{S}_{\omega_{2}}$ where $supp\left(  p_{\alpha}^{\prime
}\left[  G_{0}\right]  \right)  =supp\left(  p_{\alpha}\right)  \setminus
\delta.$ Let $\overline{r}=\left\langle r_{\beta}\right\rangle _{\beta<\delta
}$ be the generic sequence of reals added by $G_{0}.$ We can now define
$H_{\alpha}:\left[  p_{\alpha}^{\prime}\left[  G_{0}\right]  \right]
\longrightarrow\left[  \N\right]  ^{\omega}$ given by $H_{\alpha}\left(
\left\langle y_{\beta}\right\rangle \right)  =F_{\alpha}\left(  \left(
\overline{r}\upharpoonright supp\left(  p_{\alpha}\right)  \right)  ^{\frown
}\left\langle y_{\beta}\right\rangle \right)  $ which is a continous function.
By a previous lemma, for each $\alpha\in W$ we can find a continous condition
$q_{\alpha}$ and $\left\{  \left(  F_{i}^{\alpha},m_{i}^{\alpha},\Sigma
_{i}^{\alpha}\right)  \mid i\in\omega\right\}  $ a representation of
$q_{\alpha}$ with the following properties:

\begin{enumerate}
\item $q_{\alpha}\leq p_{\alpha}^{\prime}\left[  G_{0}\right]  .$

\item $supp\left(  q_{\alpha}\right)  =supp\left(  p_{\alpha}^{\prime}\left[
G_{0}\right]  \right)  .$

\item $F_{0}^{\alpha}=\left\{  \beta_{\alpha}\right\}  $ where $\beta_{\alpha
}=min\left(  supp\left(  p_{\alpha}^{\prime}\left[  G_{0}\right]  \right)
\right)  .$

\item For every $i\in\omega$ the following holds: for every $\sigma,\tau
\in\Sigma_{i}^{\alpha}$ such that $\sigma\left(  \beta_{\alpha}\right)
\neq\tau\left(  \beta_{{}}\right)  ,$ the pair $\left(  H_{\alpha}\left[
\left[  \left(  q_{\alpha}\right)  _{\sigma}\right]  \right]  ,H_{\alpha
}\left[  \left[  \left(  q_{\alpha}\right)  _{\tau}\right]  \right]  \right)
$ is decisive.
\end{enumerate}

Let $\alpha^{\ast}$ be the order type of $supp\left(  q_{\alpha}\right)  .$ For
each $\alpha\in W$ we find $q_{\alpha}^{\ast}\in\mathbb{S}_{\alpha^{\ast}}$
and $H_{\alpha}^{\ast}:\left[  q_{\alpha}^{\ast}\right]  \longrightarrow
\left[  \N\right]  ^{\omega}$ such that $\left(  q_{\alpha},H_{\alpha
}\right)  $ and $\left(  q_{\alpha}^{\ast},H_{\alpha}^{\ast}\right)  $ are
isomorphic. We can then find find $\gamma,$ $q^{\ast}\in\mathbb{S}_{\gamma}$
with representation $\left\{  \left(  F_{i},m_{i},\Sigma_{i}\right)  \mid
i\in\N\right\}  $ and a continous function $H:\left[  \gamma\right]
\longrightarrow\left[  \N\right]  ^{\omega}$ such that the set $W^{\prime
}\subseteq W$ consisting of all $\alpha$ such that $\alpha^{\ast}=\gamma,$
$q_{\alpha}^{\ast}=q^{\ast}$ and $H_{\alpha}^{\ast}=H$ has size $\omega_{2}.$

We first note that for every $i\in\N$ the following holds: for every
$\sigma,\tau\in\Sigma_{i}$ such that $\sigma\left(  0\right)  \neq\tau\left(
0\right)  ,$ the pair $\left(  H\left[  \left[  q_{\sigma}^{\ast}\right]
\right]  ,H\left[  \left[  q_{\tau}^{\ast}\right]  \right]  \right)  $ is
decisive, furthermore, $H\left(  \overline{y}\right)  \cap H\left(
\overline{z}\right)  $ is finite for every $\overline{y}\in q_{\sigma}^{\ast}$
and $\overline{z}\in q_{\tau}^{\ast}.$ It is decisive since $\left(
q_{\alpha},H_{\alpha}\right)  $ and $\left(  q^{\ast},H\right)  $ are
isomorphic, the second part of the claim follows since any pair of conditions
indexed by elements of $W^{\prime}$ have disjoint supports (and $\mathcal{A}$
is forced to be an almost disjoint family).

Given $s\in q^{\ast}\cap2^{n_{i}}$ let $B_{s}=%
{\textstyle\bigcup}
\left\{  \cup H\left[  \left[  q_{\sigma}\right]  \right]  \mid\sigma\in
\Sigma_{i}\wedge\sigma\left(  0\right)  =s\right\}  .$ Note that if $s$ and
$t$ are two different elements of $q^{\ast}\cap2^{n_{i}}$ then $B_{s}$ and
$B_{t}$ are almost disjoint. Let $T=\left\{  B_{s}\mid s\in q^{\ast}%
\cap2^{n_{i}}\wedge i\in\N\right\}  .$

Note that if $\alpha\in W^{\prime}$ then $q_{\alpha}\Vdash\dot{A}_{\alpha
}\subseteq%
{\textstyle\bigcap\limits_{s\subseteq\dot{r}_{\beta_{\alpha}}}}
B_{s}$ where $\dot{r}_{\beta_{\alpha}}$ denotes the name of
the $\beta_{\alpha}$-generic real. It follows by genericity that $\mathcal{A}$
will contain an $\R$-embeddable subfamily of size $\omega_{2}.$
\end{proof}


The rest of this section is devoted to the study of the controlled version
of the $\R$-embeddability in the Sacks model. In Theorem \ref{controlled-main}
we  obtain the maximal possible
$\omega_1$-controlled  embedding property since no family of
size $\mathfrak c$ can have $\mathfrak c$-controlled $\R$-embedding property by
Theorem \ref{no-c-controlled}.

\begin{definition} $e: 2^\omega\rightarrow 2^\omega$ is the function
satisfying $e(x)(n)=x(2n)$ for every $n\in \omega$.

\end{definition}

\begin{lemma}\label{nice-e} Let $u\subseteq 2^{<\omega}$ be in $\SSS$
 and $H:[u]\rightarrow 2^\N$
be a homeomorphism.
Let $\alpha<\omega_2$. Whenever $p\in  \SSS_{\omega_2}$
is such that $p\upharpoonright\alpha\forces p(\alpha)=\check u$ 
and $p\upharpoonright\alpha \forces \dot x\in 2^\omega$ for an $\SSS_\alpha$-name $\dot x$, 
then there is an $\SSS_\alpha$-name $\dot q$ such that 
$(p\upharpoonright\alpha)^\frown \dot q\in \SSS_{\alpha+1}$, 
$(p\upharpoonright\alpha)^\frown \dot q\leq p\upharpoonright(\alpha+1)$ and
$$(p\upharpoonright\alpha)^\frown \dot q\forces \check{ e\circ H}(\dot s_\alpha)=\dot x,$$
In particular $(p\upharpoonright\alpha)^\frown \dot q\forces \check e(\dot s_\alpha)=\dot x$, if
$p(\alpha)=1_{\SSS}$.
\end{lemma}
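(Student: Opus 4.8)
The plan is to thin the tree $u$ sitting at coordinate $\alpha$, working inside the intermediate extension, so that along the resulting subtree the \emph{even} coordinates of $H(\dot s_\alpha)$ are forced to spell out $\dot x$. The whole point is that $e$ inspects only the even coordinates of its argument, so the odd coordinates remain free and can be spent to keep the relevant fibre perfect, and thus to keep the thinned tree a Sacks tree.

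First I would pass to a generic $G_\alpha\subseteq\SSS_\alpha$ with $p\upharpoonright\alpha\in G_\alpha$ and let $x=\dot x[G_\alpha]\in2^\omega$. In $V[G_\alpha]$ set
$$C=\{z\in2^\omega\mid z(2n)=x(n)\ \text{for all}\ n\in\N\}=e^{-1}(\{x\}).$$
This $C$ is a nonempty closed set with no isolated points (it is homeomorphic to $2^\omega$ via the odd coordinates), hence perfect. Since $H:[u]\to2^\omega$ is a homeomorphism and is onto, $D=H^{-1}[C]$ is a nonempty perfect closed subset of $[u]$. Let $q=\{s\in u\mid s\subseteq y\ \text{for some}\ y\in D\}$ be the tree of $D$; being the tree of a nonempty perfect set, $q\in\SSS$, and $q\subseteq u$ with $[q]=D$.

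Next I would read this construction back as an $\SSS_\alpha$-name: the assignment $G_\alpha\mapsto q$ yields a name $\dot q$ with $p\upharpoonright\alpha\forces$ ``$\dot q\in\SSS$ and $\dot q\subseteq\check u$''. Consequently $(p\upharpoonright\alpha)^\frown\dot q\in\SSS_{\alpha+1}$, and since $p\upharpoonright(\alpha+1)=(p\upharpoonright\alpha)^\frown\check u$ (using $p\upharpoonright\alpha\forces p(\alpha)=\check u$), we get $(p\upharpoonright\alpha)^\frown\dot q\leq p\upharpoonright(\alpha+1)$, as required.

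Finally, for the forcing statement I would invoke the basic property that $(p\upharpoonright\alpha)^\frown\dot q$ forces the $\alpha$-th generic real $\dot s_\alpha$ to be a branch of $\dot q$, i.e.\ $\dot s_\alpha\in[\dot q]=D$. Then $H(\dot s_\alpha)\in C$, so $e(H(\dot s_\alpha))=x=\dot x$, which is exactly $\check{e\circ H}(\dot s_\alpha)=\dot x$. The ``in particular'' clause follows by specializing to $u=2^{<\omega}$ (so that $\check u=1_\SSS$) and $H=\mathrm{id}_{2^\omega}$, for which $e\circ H=e$. The only genuine content is the perfectness of the fibre $C$; everything else is routine name bookkeeping together with the standard fact that the Sacks generic is a branch of every tree in the generic filter, so I do not expect a real obstacle beyond keeping the name/evaluation distinction straight.
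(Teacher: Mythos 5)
Your construction is exactly the paper's: the paper defines $\dot q$ as the name for the tree $\{y\upharpoonright n\mid y\in[u],\ \forall k\ H(y)(2k)=x(k)\}$, which is precisely the tree of your $D=H^{-1}[e^{-1}(\{x\})]$, and the verification (perfectness via the free odd coordinates, $q\subseteq u$ giving the extension relation, and the generic $\dot s_\alpha$ being a branch of $\dot q$) is the same. Your write-up just makes explicit the perfectness argument and the name bookkeeping that the paper leaves implicit.
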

\begin{proof}
Define $\dot q$ to be an $\SSS_\alpha$-name for the set
$$\{y\upharpoonright n\mid y\in [u],  \ \forall k\in \omega\ H(y)(2k)=x(k), n\in \omega\}.$$
 This is an $\SSS_\alpha$-name for a perfect subtree
of $u$ and so $(p\upharpoonright\alpha)^\frown \dot q\in \SSS_{\alpha+1}$,
 $(p\upharpoonright\alpha)^\frown \dot q\leq p\upharpoonright(\alpha+1)$. We also have
$(p\upharpoonright\alpha)^\frown \dot q\forces \dot s_\alpha\in \dot q$ and $e(H(z))=x$ for every
$z\in [q]$, so the lemma follows.
\end{proof}

\begin{lemma}\label{p-nice-e} Let $\beta<\delta<\omega_2$ and suppose that
$p\in \SSS_{\delta+1}\subseteq \SSS_{\omega_2}$ and $\dot F$ is an $\SSS_{\delta}$-name 
for a continuous function from $2^\omega$ onto $2^\omega$ such that $F^{-1}[\{x\}]\cap [p(\delta)]$ 
is perfect for every $x\in 2^\omega$ in any forcing extension.
There is an $\SSS_{\delta}$-name $\dot r$
such that $p\upharpoonright\delta^\frown \dot r\leq p$ and
$$p\upharpoonright\delta^\frown \dot r\forces \dot F(\dot s_\delta)=\dot s_\beta.$$
\end{lemma}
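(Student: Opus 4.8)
The plan is to build the $\SSS_\delta$-name $\dot r$ by a fusion argument, in direct analogy with Lemma \ref{nice-e}. There, the coordinate $\alpha$ carried the trivial factor and the target value $\dot x$ was an $\SSS_\alpha$-name; here the crucial change is that the target is not an external name but the $\beta$-th generic real $\dot s_\beta$, which lives \emph{inside} the iteration at a coordinate $\beta<\delta$. So the prospective condition $\dot r$ at coordinate $\delta$ must depend on what the generic below $\delta$ does at stage $\beta$. Concretely, I would define $\dot r$ to be an $\SSS_\delta$-name for the tree
\[
\{z\upharpoonright n\mid z\in [p(\delta)],\ \dot F(z)=\dot s_\beta,\ n\in\omega\},
\]
where both $\dot F$ and $\dot s_\beta$ are being read off from the generic for $\SSS_\delta$. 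The hypothesis that $F^{-1}[\{x\}]\cap[p(\delta)]$ is perfect for every $x\in 2^\omega$ in every forcing extension is exactly what guarantees that, once $\dot s_\beta$ is decided by the generic below $\delta$, this name evaluates to a perfect subtree of $p(\delta)$, hence to a legitimate Sacks condition.

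The first step is therefore to verify that $\dot r$ is (forced to be) a Sacks tree: working below $p\upharpoonright\delta$, for any generic $G$ on $\SSS_\delta$ the value $\dot s_\beta[G]=x$ is some fixed element of $2^\omega$, and by the perfectness hypothesis the fiber $\dot F[G]^{-1}[\{x\}]\cap[p(\delta)[G]]$ is a nonempty perfect set, whose downward closure of branches is a perfect subtree of $p(\delta)[G]$. This gives $(p\upharpoonright\delta)^\frown\dot r\in\SSS_{\delta+1}$. The second step is the inequality $(p\upharpoonright\delta)^\frown\dot r\le p$: at every coordinate other than $\delta$ the two conditions agree with $p$, and at $\delta$ we have forced $\dot r\subseteq p(\delta)$ by construction, so the comparison reduces to the single coordinate $\delta$ where the inclusion is immediate. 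The third and final step is the forcing statement: since $(p\upharpoonright\delta)^\frown\dot r\forces \dot s_\delta\in[\dot r]$ and every branch $z$ of $\dot r$ satisfies $\dot F(z)=\dot s_\beta$ by definition, we conclude
\[
(p\upharpoonright\delta)^\frown\dot r\forces \dot F(\dot s_\delta)=\dot s_\beta,
\]
as required.

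The main obstacle, and the reason the perfectness hypothesis on $\dot F$ is stated so carefully (``in any forcing extension''), is ensuring that $\dot r$ names a genuine Sacks condition rather than a degenerate tree. If some fiber of $F$ over $[p(\delta)]$ were merely a singleton or a countable set, the downward closure of $\{z\mid \dot F(z)=\dot s_\beta\}$ would fail to have splitting nodes everywhere and would not be a Sacks tree; the uniform perfectness of the fibers is precisely what rules this out and lets the fusion-free, one-step definition go through. I would take some care to phrase this in a way that is robust under passing to the generic extension by $\SSS_\delta$, since $\dot s_\beta$ is only determined there; the hypothesis is formulated for ``any forcing extension'' exactly so that this evaluation is legitimate regardless of which generic we take.
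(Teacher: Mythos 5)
Your proposal is correct and is essentially the paper's own proof: the paper likewise takes $\dot r$ to be an $\SSS_\delta$-name for the tree of the fiber $F^{-1}[\{s_\beta\}]\cap[p(\delta)]$ (written there as an intersection over the generic filter), uses the perfectness-in-all-extensions hypothesis to see it names a Sacks condition below $p(\delta)$, and concludes from $\dot s_\delta$ being forced into $[\dot r]$. The only difference is notational — you define the tree of initial segments directly, while the paper names the branch set first and then picks a tree name for it.
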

\begin{proof} Let $\dot q$ be  an $\SSS_{\delta}$-name for the set
$$\bigcap_{u\in \GG_\delta} F^{-1}[[u(\beta)]]\cap [p(\delta)]=
 F^{-1}[\bigcap_{u\in \GG_\delta}[u(\beta)]]\cap [p(\delta)]= 
 F^{-1}[\{s_\beta\}]\cap p(\delta).$$
It is a name for a perfect set, as preimages of singletons under $F$ are perfect in  $p(\delta)$
in any forcing extension. Let $\dot r$ be such a name that $[r]=q$.
So $p\upharpoonright\delta^\frown \dot r\in \SSS_{\delta+1}$. Also $q\subseteq [p(\delta)]$, so
$p\upharpoonright\delta^\frown \dot r\leq p$. If $z\in q$, then 
$F(z)=\dot s_\beta$. But $p\upharpoonright\delta^\frown \dot r\forces
\dot s_{\delta}\in [\dot r]=\dot q$, so the lemma follows.

\end{proof}

\begin{definition}\label{coding}$c_1: \SSS\rightarrow {2^\omega}$ is the following coding of
perfect subtrees of $2^{<\omega}$ by the reals. Let $\tau: \N\rightarrow 2^{<\omega}$ be any
fixed bijection. Then given  $p\in\SSS$ we define
$c_1(p)(n)=1$ if and only if  $\tau(n)\in p$.
$c_2$ will denote the decoding function i.e., 
$$c_2(x)=\{ \tau(n)\mid  x(n)=1, \ n\in \omega\}.$$
\end{definition}

\begin{definition}\label{def-p-surjection}
Let $\{U_n\mid n\in \N\}$ be a fixed bijective enumeration of all 
 clopen subsets of $2^\omega$. 
Suppose that $p\in\SSS$.
Define $F_p: 2^\omega\rightarrow 2^\omega$ as follows: First by recursion define
a strictly increasing sequence $(n_i)_{i\in \N}$ such that $n_0$ is minimal satisfying
$U_{n_0}\cap [p]\not=\emptyset\not= [p]\setminus U_{n_0}$
and both $U_{n_0}$ and $2^\omega\setminus U_{n_0}$ are intervals
 in the lexicographical order on $2^\omega$. Given $n_0, ..., n_k$
for $k\in \N$ let $n_{k+1}$ be minimal such that $n_{k+1}>n_k$ and
the following  conditions hold for every $\sigma\in 2^{k+2}$:
\begin{enumerate} 
\item $\bigcap_{0\leq i\leq k+1} U_{n_i}^{\sigma(i)}$ is an
 interval in the lexicographical order on $2^\omega$,
\item $\bigcap_{0\leq i\leq k+1} U_{n_i}^{\sigma(i)}\cap [p]\not=\emptyset,$
\item $diam(\bigcap_{0\leq i\leq k+1} U_{n_i}^{\sigma(i)}\cap [p])\leq (2/3)^{k+1}$,
\end{enumerate}
where $V^1=V$ and $V^{0}=2^\omega\setminus V$.
Finally for $x\in 2^\omega$ and $i\in \omega$ we define
$$F_p(x)(i)=\chi_{U_{n_{2i}}}(x).$$
\end{definition}

\begin{lemma}\label{perfect-absolute}
Let $p\in\SSS$.  $F^{-1}_p[\{x\}]\cap [p]$ is perfect
for any $x\in2^\omega$ in any forcing extension.
\end{lemma}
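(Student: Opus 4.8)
The plan is to reduce perfectness of $F_p^{-1}[\{x\}]\cap[p]$ to the fact that, on $[p]$, the map $F_p$ arises from a homeomorphism between $[p]$ and $2^\omega$, and then to check that all of the combinatorial data defining this homeomorphism is absolute between $V$ and any forcing extension $V[G]$. So I would fix such a $V[G]$ and reinterpret Definition \ref{def-p-surjection} there.

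The first and main step is absoluteness of the recursion producing $(n_i)_{i\in\N}$. Conditions (1)--(3) of Definition \ref{def-p-surjection}, imposed on each candidate $n_{k+1}$, must be seen to be absolute statements about the fixed tree $p$ and the fixed clopen sets $U_n$. Condition (1), that $\bigcap_{i\le k+1}U_{n_i}^{\sigma(i)}$ is a lexicographic interval, is a combinatorial property of the finite antichain coding this clopen set and is absolute. Condition (2), nonemptiness of $A\cap[p]$ for $A=\bigcap_{i\le k+1}U_{n_i}^{\sigma(i)}$, is equivalent to the existence of a node $t\in p$ with $[t]\subseteq A$: writing $A$ as a finite union of basic clopen sets and using that $p$ is perfect (so $[u]\cap[p]\neq\emptyset$ iff $u\in p$, in every model), this reduces to a statement about membership in the fixed set $p$, hence is absolute. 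For condition (3) I use that the diameter of a nonempty compact $S\subseteq 2^\omega$ equals $2^{-m}$, where $m$ is the length of the longest common initial segment of the elements of $S$; hence $\mathrm{diam}(A\cap[p])\le(2/3)^{k+1}$ is equivalent to the assertion that at most one $t\in 2^m$ (for the fixed $m$ with $2^{-m}\le(2/3)^{k+1}<2^{-(m-1)}$) satisfies $A\cap[t]\cap[p]\neq\emptyset$, each such instance being absolute as in condition (2). Therefore the minimal witness $n_{k+1}$ selected by the recursion is the same in $V$ and in $V[G]$, so the whole sequence $(n_i)$, and with it conditions (1)--(3), persist to $V[G]$.

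Working now in $V[G]$, I would show that $G\colon[p]\to 2^\omega$, $G(y)(i)=\chi_{U_{n_i}}(y)$, is a homeomorphism. Continuity is immediate since each $U_{n_i}$ is clopen. For surjectivity, given $\sigma\in 2^\omega$ the sets $\bigcap_{i\le k}U_{n_i}^{\sigma(i)}\cap[p]$ form a decreasing chain of nonempty compact sets (condition (2)) with diameters tending to $0$ (condition (3)), so their intersection is a single point whose $G$-image is $\sigma$; injectivity follows from the same diameter estimate. A continuous bijection from the compact space $[p]$ onto the Hausdorff space $2^\omega$ is a homeomorphism.

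Finally, since $F_p(y)(i)=\chi_{U_{n_{2i}}}(y)=G(y)(2i)=e(G(y))(i)$ for $y\in[p]$, we have $F_p\upharpoonright[p]=e\circ G$ and hence $F_p^{-1}[\{x\}]\cap[p]=G^{-1}\big[e^{-1}[\{x\}]\big]$. The fibre $e^{-1}[\{x\}]=\{z\in 2^\omega\mid z(2i)=x(i)\text{ for all }i\}$ is homeomorphic to $2^\omega$ through its odd coordinates, so it is perfect, and its image under the homeomorphism $G^{-1}$ is perfect as well; this yields perfectness of $F_p^{-1}[\{x\}]\cap[p]$ in $V[G]$. The crux is the second paragraph: condition (3) superficially concerns the set $A\cap[p]$, which gains new points in $V[G]$, and the whole point is to rewrite the diameter bound as the absolute statement that all but at most one basic clopen set of the relevant level miss $p$.
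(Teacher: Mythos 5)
Your proposal is correct and takes essentially the same approach as the paper: the paper's proof consists of the single remark that conditions (1)--(3) of Definition \ref{def-p-surjection} guarantee the perfectness of the fibers and are preserved by any forcing. Your argument is a faithful expansion of exactly those two claims---absoluteness obtained by rewriting conditions (2) and (3) as finite combinatorial statements about nodes of the fixed tree $p$, and perfectness obtained from the homeomorphism between $[p]$ and $2^\omega$ determined by the sets $U_{n_i}$, composed with the even-coordinate map $e$.
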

\begin{proof} The conditions (1) - (3) of Definition \ref{def-p-surjection} guarantee the
property in the statement of the lemma, but they are preserved by any forcing.
\end{proof}

\begin{lemma}\label{continuity}
The function $f:2^\omega\times 2^\omega\rightarrow 2^\omega$  defined as
$$f(x, y)=F_{c_2(x)}(y)$$
is continuous.
\end{lemma}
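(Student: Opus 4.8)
The plan is to verify continuity coordinate by coordinate. Since $2^\omega$ carries the product topology, $f$ is continuous if and only if each coordinate map $(x,y)\mapsto f(x,y)(i)$, $i\in\omega$, into the discrete two-point space is continuous, i.e. locally constant. Writing $p=c_2(x)$ and letting $(n_j)_{j\in\N}$ be the strictly increasing sequence produced by the recursion of Definition \ref{def-p-surjection} applied to $p$, we have by that definition
$$f(x,y)(i)=F_p(y)(i)=\chi_{U_{n_{2i}}}(y).$$
So I fix $i$ and a point $(x_0,y_0)$ with $c_2(x_0)\in\SSS$ (the case of interest), and I look for a neighbourhood of $(x_0,y_0)$ on which this coordinate is constant. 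The argument splits into showing that the index $n_{2i}$ depends only on finitely many coordinates of $x$, and that, once that index is frozen, the value depends locally constantly on $y$.

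First I would show by induction on $k\le 2i$ that the value $n_k$ is determined by $c_2(x)\cap 2^{\le m_k}$ for some fixed level $m_k$, hence is locally constant in $x$. In selecting the minimal valid $n_{k+1}>n_k$ one tests, for each candidate index $n$ and each $\sigma\in 2^{k+2}$, the clauses (1)--(3) of Definition \ref{def-p-surjection}. Clause (1) (being a lexicographic interval) refers only to the clopen sets $U_{n_0},\dots,U_{n_{k+1}}$ and not to $p$ at all. Clauses (2) and (3) concern the sets $V_\sigma=\bigcap_{0\le j\le k+1}U_{n_j}^{\sigma(j)}$ intersected with $[p]$. Each $V_\sigma$ is clopen, hence a finite union of basic cylinders $[s]$ with $s\in 2^{\ell}$ for a fixed level $\ell$. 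The crucial point is that for a pruned perfect tree $p\in\SSS$ one has $[s]\cap[p]\neq\emptyset$ iff $s\in p$, so that the nonemptiness clause (2) becomes the finite condition ``some relevant $s\in 2^\ell$ lies in $p$''; likewise, since $\operatorname{diam}(V_\sigma\cap[p])\le(2/3)^{k+1}$ means $V_\sigma\cap[p]$ sits inside a single cylinder of a fixed level $\ell'$, clause (3) also reduces to a statement about which nodes of level $\max(\ell,\ell')$ belong to $p$. Because the minimal valid index $n_{k+1}(x_0)$ is a fixed natural number, only finitely many candidate indices (all $\le n_{k+1}(x_0)$) and finitely many $\sigma$ are examined, each a finite-level condition on $p$; hence any $x$ coding a Sacks tree and agreeing with $x_0$ on the corresponding finite set of coordinates produces the same $n_{k+1}$. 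Running this through the finite recursion up to $k=2i$ pins down $N:=n_{2i}$ on a neighbourhood $N_1$ of $x_0$.

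To finish, on $N_1$ the coordinate map reduces to $y\mapsto\chi_{U_N}(y)$ with $N$ fixed, and this is locally constant because $U_N$ is clopen: taking $N_2$ to be a neighbourhood of $y_0$ contained in $U_N$ or in its complement, the map $f(\cdot,\cdot)(i)$ is constant on $N_1\times N_2$ and equal to $f(x_0,y_0)(i)$. This gives continuity at $(x_0,y_0)$, and since $(x_0,y_0)$ and $i$ were arbitrary, continuity of $f$.

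The main obstacle is precisely the dependence of the recursion on the \emph{infinite} branch set $[c_2(x)]$ through clauses (2) and (3): a priori, whether a clopen set meets $[p]$, or how small $[p]\cap V_\sigma$ is, could involve all levels of $p$ and hence infinitely many coordinates of $x$. The resolution is to combine the finite-level (clopen) nature of the sets $U_{n_j}$ with the fact that $c_2(x)$ is a pruned perfect tree, collapsing ``$[p]$ meets $[s]$'' to the single-coordinate condition ``$s\in p$''. This is exactly what turns the defining conditions into clopen conditions on $x$ and delivers the required local constancy; the diameter clause (3) requires the same care, being rephrased as an upper bound on the number of level-$\ell'$ cylinders of $p$ meeting $V_\sigma$.
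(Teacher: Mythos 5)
Your proof is correct and is in essence the paper's own argument: both turn on the single key fact that the recursion of Definition \ref{def-p-surjection} producing $(n_i)$ depends only on finitely many coordinates of the code $c_1(p)$, which the paper merely asserts (``Given $p$ there is $m\in\omega$ such that \dots the constructions \dots for $p$ and $p'$ agree'') and then converts into uniform convergence of $F_{c_2(x_n)}$ to $F_{c_2(x)}$ plus a triangle-inequality estimate. You supply a proof of that finite-dependence claim (reducing clauses (1)--(3) to finite-level conditions on the pruned tree via $[s]\cap[p]\neq\emptyset\Leftrightarrow s\in p$ and the cylinder reformulation of the diameter bound) and package the conclusion as coordinatewise local constancy instead of an $\varepsilon$-argument; this is the same route, carried out in more detail than the paper's sketch.
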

\begin{proof} Let $\varepsilon>0$.
Let $\{U_n\mid n\in \N\}$ and $\{U_{n_i}\mid i\in \N\}$ be as in Definition \ref{def-p-surjection}.
Let $i_0\in 2\N$ be such that $\Sigma_{i=i_0}^\infty1/2^i<\varepsilon/2$. 
Given $p$ there is $m\in\omega$ such that if $p, p'$ are perfect subsets of $2^\omega$ such that
$c_1(p)\upharpoonright m=c_1(p')\upharpoonright m$, then the constructions of
 $\{U_{n_i}\mid i<i_0\}$ for $p$ and $p'$ agree. It follows that if $x_n$ is sufficiently
 close to $x$, then $|F_{c_2(x_n)}(z)-F_{c_2(x)}(z)|<\varepsilon/2$ (i.e.,
$F_{c_2(x_n)}$ converges uniformly to $F_{c_2(x)}$).
So 
$$|F_{c_2(x)}(y)-F_{c_2(x_n)}(y_n)|=
|F_{c_2(x)}(y)-F_{c_2(x)}(y_n)+F_{c_2(x)}(y_n)-F_{c_2(x_n)}(y_n)|\leq$$
$$\leq |F_{c_2(x)}(y)-F_{c_2(x)}(y_n)|+|F_{c_2(x)}(y_n)-F_{c_2(x_n)}(y_n)|<\varepsilon$$
 if $|y-y_n|$ and $|x-x_n|$ are sufficiently small by the 
 continuity of $F_{c_2(x)}$ and the above-mentioned uniform convergence.
\end{proof}

\begin{theorem}\label{reversing} The following statement is true in the Sacks model:
Suppose that $\{x_\xi\mid \xi<\omega_2\}\subseteq 2^\omega$ is a set of distinct reals
and $\{y_\xi\mid \xi<\omega_2\}\subseteq 2^\omega$.  Then there is a continuous 
$g: 2^\omega\rightarrow 2^\omega$
and $X\subseteq \omega_2$ of cardinality $\omega_1$ such that $g(x_\xi)=y_\xi$
for all $\xi\in X$. In fact, there is a ground model  continuous
 $\phi:2^\omega\times 2^\omega \rightarrow 2^\omega$
such that $\phi(x_\xi, s_\delta)=y_\xi$ for all $\xi\in X$ and some $\delta<\omega_2$.
\end{theorem}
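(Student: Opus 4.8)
The plan is to take for $\phi$ the ground-model continuous function $\phi(x,y)=f(y,x)=F_{c_2(y)}(x)$ produced by Lemma \ref{continuity} together with Definitions \ref{coding} and \ref{def-p-surjection}, so that $\phi(x_\xi,s_\delta)=F_{c_2(s_\delta)}(x_\xi)$; in this way everything reduces to forcing that the perfect tree coded by a single generic Sacks real sends uncountably many $x_\xi$ to the prescribed $y_\xi$. First I would replace the data by $\SSS_{\omega_2}$-names $\dot x_\xi,\dot y_\xi$ and use that in the countable support iteration every real has countable support. Since an $\omega_1$-sized set of countable subsets of $\omega_2$ is bounded, there are $\delta<\omega_2$ and $X\in[\omega_2]^{\omega_1}$ with $\{x_\xi,y_\xi\mid\xi\in X\}\subseteq M:=V[G\upharpoonright\delta]$ (take, e.g., the first $\omega_1$ indices). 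Note that $M\models$ {\sf CH} and that $s_\delta$ is $\SSS$-generic over $M$, while later coordinates do not affect the relation $F_{c_2(s_\delta)}(x_\xi)=y_\xi$, which involves only $s_\delta$ and the $M$-objects $x_\xi,y_\xi$.

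Working over $M$, for $\xi\in X$ put $E_\xi=\{z\in 2^\omega\mid F_{c_2(z)}(x_\xi)=y_\xi\}$; this is a closed set coded in $M$, because $z\mapsto F_{c_2(z)}(x_\xi)=f(z,x_\xi)$ is continuous by Lemma \ref{continuity}. Let $\dot X'=\{\xi\in X\mid \dot s_\delta\in E_\xi\}$, so that $\phi(x_\xi,s_\delta)=y_\xi$ for every $\xi\in X'$; it therefore suffices to force $X'$ uncountable (the values are consistent because the $x_\xi$ are distinct, so $g=\phi(\,\cdot\,,s_\delta)$ is a genuine function). Since the single Sacks step over $M$ is proper and $\omega^\omega$-bounding, if some condition forced $X'$ countable then below it there would be a countable $S^{*}\in M$ with $X'\subseteq S^{*}$. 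Hence it is enough to prove the density statement: for every $r\in\SSS^{M}$ and every countable $S^{*}\in M$ there are $\xi\in X\setminus S^{*}$ and $r'\le r$ with $[r']\subseteq E_\xi$. Granting this, the usual contradiction forces $X'$ uncountable, and properness of the remaining iteration keeps it uncountable in $V[G]$; the first assertion then follows at once, taking $g=\phi(\,\cdot\,,s_\delta)$, which is continuous because $\phi$ is continuous and $s_\delta$ is fixed.

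The main obstacle is the purely combinatorial fact about the coding that underlies the density statement, namely that $\{z\in[r]\mid F_{c_2(z)}(x)=y\}$ contains a perfect set for every perfect tree $[r]$ and every pair $(x,y)$ with $x\in2^\omega$; granting it, for the given $r,S^{*}$ one picks any $\xi\in X\setminus S^{*}$ and a perfect $[r']\subseteq[r]\cap E_\xi$. I expect this to be the heart of the proof and to be established by a fusion: one builds a perfect subtree of $[r]$ by recursion on length, at stage $i$ committing only finitely much of the admissible codes $z$ so that for the resulting trees $P=c_2(z)$ one forces $\chi_{U_{n_{2i}(P)}}(x)=y(i)$, i.e.\ $F_P(x)(i)=y(i)$, exploiting the freedom in Definition \ref{def-p-surjection} to position the clopen intervals $U_{n_j}$ relative to the fixed point $x$ while keeping the set of admissible codes perfect; the absoluteness of the perfect-fibre property (Lemma \ref{perfect-absolute}) guarantees that the requisite splitting survives at cofinally many stages. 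The delicate point — and the place where I expect the genuine work to lie — is to verify that these finitely-many commitments never obstruct one another, so that the fusion does not run dry; this is precisely the flexibility for which the surjections $F_p$ were designed. It is also worth stressing that the uncountable witnessing set $X'$ must be new, living in $M[s_\delta]$ rather than in $M$: under {\sf CH} there need be no uncountable \emph{ground-model} continuous uniformisation, so the genericity of $s_\delta$ is essential and the argument cannot be replaced by encoding a ground-model tree into $s_\delta$.
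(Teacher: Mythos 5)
Your plan reverses the roles for which the coding machinery of Definitions \ref{coding} and \ref{def-p-surjection} was designed, and the combinatorial claim you yourself identify as the heart of the proof is false. You need: for every condition $r\in\SSS^{M}$ and every pair $(x,y)$, the set $\{z\in[r]\mid F_{c_2(z)}(x)=y\}$ contains a perfect set. First, it is dense to force $c_2(s_\delta)$ not to be a tree at all (a condition can decide two coordinates of $z$ so that $c_2(z)$ contains a node but not its immediate predecessor), so generically your $g=F_{c_2(s_\delta)}$ is not even defined; one can try to repair this by working below some $r_0$ all of whose branches code perfect trees (such conditions exist), but then a second, fatal obstruction appears. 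For a pruned tree $P$, whether a candidate clopen set $U_n$ properly splits $[P]$, whether the cells $\bigcap_{i\leq k}U_{n_i}^{\sigma(i)}$ meet $[P]$, and whether their traces on $[P]$ have small diameter are all determined by $P\cap 2^{\leq \ell}$ for a finite $\ell$; hence on codes of perfect trees the indices $n_0(c_2(z)),n_1(c_2(z)),\dots$ of Definition \ref{def-p-surjection} are locally constant in $z$. Consequently, below any $r\leq r_0$ there is $r'\leq r$ which freezes $n_0,\dots,n_{2k}$, and then $F_{c_2(z)}(x)(i)=\chi_{U_{n_{2i}}}(x)$ takes the same value for all $z\in[r']$ and every $x$ simultaneously. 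So for any $y$ with $y(0)\neq\chi_{U_{n_0}}(x)$ the set $\{z\in[r']\mid F_{c_2(z)}(x)=y\}$ is empty; your density statement fails below a dense set of conditions and the fusion you envisage cannot even start. The underlying point is that an arbitrary Sacks condition constrains the code of the tree arbitrarily, so no ``freedom to position the $U_{n_j}$'' survives: Lemma \ref{perfect-absolute} gives perfect fibres in the \emph{argument} of $F_p$ for a \emph{fixed} tree $p$, not perfect sets of \emph{trees} realizing a prescribed value at a fixed argument, and these are entirely different statements.

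This is precisely why the paper's proof is arranged the other way around: the tree plugged into $F$ is the condition $p(\delta)$ itself and the generic $s_\delta$ is the \emph{argument}. Lemma \ref{p-nice-e} shrinks $p(\delta)$ into the fibre $F_{p(\delta)}^{-1}[\{s_{\beta_{\xi'}}\}]\cap[p(\delta)]$, a legal extension for every adversarial $p(\delta)$ exactly because of Lemma \ref{perfect-absolute}; the pointer to $p(\delta)$ is not carried by $s_\delta$ but is installed at earlier coordinates via Lemma \ref{nice-e} (the even/odd-coordinate trick, available below every condition), which forces $e\circ H(s_{\alpha_{\xi'}})=c_1(p(\delta))$ and $e(s_{\beta_{\xi'}})=y_{\xi'}$. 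To make this work the paper must first use the result of \cite{hart} and prune (using distinctness of the $x_\xi$ plus {\sf CH} in intermediate models) to a subfamily of \emph{new} reals $x_{\xi'}$ satisfying $h(x_{\xi'})=s_{\alpha_{\xi'}}$; the uniformized subfamily consists of reals appearing cofinally in the iteration. In your scheme the $x_\xi$ are ground-model reals of $M$ and distinctness is used only to make $g$ single-valued, so you would in particular need a single Sacks real over a {\sf CH} model $M$ to yield a continuous function uniformizing uncountably many prescribed pairs of $M$-reals via the fixed map $(z,x)\mapsto F_{c_2(z)}(x)$ --- and this is exactly what the freezing argument rules out. (Your reduction also glosses over the requirement that the indexed family $\langle(x_\xi,y_\xi)\mid\xi\in X\rangle$, not merely its range, lie in $M$ for the genericity argument over $M$ to make sense; that part is repairable by an $\omega_2$-c.c.\ argument, but the false density claim is not.)
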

\begin{proof}
As {\sf CH} holds in intermediate models we may assume that there are strictly increasing
$\{\beta_\theta\mid \theta<\omega_2\}$, conditions
 $p_\theta\in \SSS_{\beta_\theta}\subseteq \SSS_{\omega_2}$
and $\SSS_{\beta_\theta}$-names $\dot x_\theta, \dot y_\theta$ for $x_\theta$ 
and $y_\theta$ respectively where $\theta<\omega_2$ such that
$p_\theta\forces \dot x_\theta\not\in V^{\SSS_{\theta+1}}$. Using the {\sf CH} in the ground model
we can apply the stationary $\Delta$-system lemma\footnote{By the stationary $\Delta$-system lemma
we will mean the following lemma: given a family $\{X_\theta\mid \theta<\omega_2\}$
of countable subsets of $\omega_2$ there is a stationary set 
$A\subseteq\{\alpha\in\omega_2\mid cf(\alpha)=\omega_1\}$
such that $\{X_\theta\mid \theta\in A\}$ forms a $\Delta$-system. One can prove it as follows: 
Take regressive $f: \{\theta<\omega_2\mid cf(\theta)=\omega_1\}\rightarrow \omega_2$
given by $f(\theta)=\sup(X_\theta\cap \theta)$. Use the pressing down lemma obtaing 
a stationary $A'\subseteq A$ where $f$ is constantly equal to $\theta_0$. By {\sf CH}  and the $\omega_1$-additivity of the
nonstationary ideal on $\omega_2$ there is a stationary  $A''\subseteq A'$
such that $X_\theta\cap \theta_0$ is constant for $\theta\in A''$. 
Consider $g:\omega_2\rightarrow \omega_2$
given by $g(\theta)=\sup\{\sup(X_\eta)\mid \eta\leq\theta\}$. Let $A\subseteq A''$ 
 be the intersection of $A''$ with the club consisting of the ordinals bigger than $\theta_0$
 and  closed under 
 $g$. $A$ is the required set.} for countable sets and obtain a stationary $A\subseteq\{\alpha\in\omega_2: cf(\alpha)=\omega_1\}$
 such that $\{supp(p_\theta)\mid \theta\in A\}$ forms a $\Delta$-system
with root $\Delta\subseteq\omega_2$ and all the conditions agree on $\Delta$. 


We can use the result of \cite{hart} to find continuous
$h_\theta: 2^\omega\rightarrow 2^\omega$ and $q_\theta\leq p_\theta$ such that 
$q_\theta\forces 
\check h_\theta(\dot x_\theta)=\dot s_{\theta},$
for all $\theta\in A$.  Use the pressing down lemma finding 
a stationary $A'\subseteq A$ such that there is $\alpha<\omega_2$
with $supp(q_\theta)\cap\theta\subseteq \alpha$ for all $\theta\in A'$.
We will work for the rest of the proof in  $V^{\SSS^\alpha}$ which will
be treated as the ground model. 
By passing to a subset of $A'$ of cardinality $\omega_2$ 
and renaming the $q_\theta$'s we may assume that
$$p_\theta\forces 
\check h(\dot x_\theta)=\dot s_{\theta},\leqno (1)$$
for a fixed continuous $h: 2^\omega\rightarrow 2^\omega$ and all $\theta\in A'$
and $p(\theta)$ is a fixed perfect tree $u\subseteq 2^{<\omega}$ and the supports of
$p_\theta$s for $\theta\in A'$ are pairwise disjoint and $\min(supp(p_\theta))=\theta$
for all $\theta\in A'$. Also
fix a homeomorphism $H: [u]\rightarrow 2^\omega$. Construct a strictly increasing
$\{\theta_\xi\mid \xi<\omega_1\}$ such that ${\theta_\xi}<\beta_{\theta_\xi}<
{\theta_{\xi'}}$ for all $\xi<\xi'<\omega_1$. Relabel the involved objects as
$p_\xi:=p_{\theta_\xi}$, $\alpha_\xi:={\theta_\xi}$, $\beta_\xi:=\beta_{\theta_\xi}$, 
$\dot x_\xi:=\dot x_{\theta_\xi}$,
$\dot y_\xi:=\dot y_{\theta_\xi}$. Let $\delta<\omega_2$ be 
$\sup\{\alpha_\xi:\xi<\omega_1\}=\sup\{\beta_\xi\dot\xi<\omega_1\}$.

We will work with the iteration $\SSS_{\delta+1}$. In the model $V^{\SSS_{\delta+1}}$
 $g$ is defined by 
 $$g(x)=e\circ f(e\circ H(h(x)), s_{\delta}),$$
  where $f$ is as in  Lemma \ref{continuity}.
 By (1) it is enough to prove that given $p\in \SSS_{\delta+1}$ and $\xi<\omega_1$
  there is $p'\leq p$,  $p'\in \SSS_{\delta+1}$ and $\xi<\xi'<\omega_1$ such that
  $$p'\forces \dot f( e\circ H(\dot s_{\alpha_{\xi'}}), \dot s_{\delta})=\dot s_{\beta_{\xi'}},
  \ \ e(\dot s_{\beta_{\xi'}})=\dot {y_\xi}.\leqno (2)$$
  
  Let $\xi<\xi'<\omega_1$ be such that the support of $p\upharpoonright\delta$ is included in
  $\alpha_{\xi'}$, so we can assume that $p\upharpoonright\delta\in \SSS_{\alpha_{\xi'}}$ and
  so $p(\delta)$ is an $\SSS_{\alpha_{\xi'}}$-name. As 
  $supp(p_{\xi'})\subseteq [\alpha_{\xi'},\beta_{\xi'})$, the conditions $p$ and $p_{\xi'}$
  are compatible. Let $p''\in \SSS_{\delta+1}$ be obtained from $p$ by replacing $1$ by
  $p_{\xi'}(\alpha)$ on any $\alpha\in [\alpha_{\xi'},\beta_{\xi'})$ so that 
  $p''\leq p, p_{\xi'}$ and $p''(\alpha_{\xi'})=u$. Now to obtain the desired $p'\leq p''$
  we will modify $p''$ on $\alpha_{\xi'}, \beta_{\xi'}$ and $\delta$ using Lemmas
  \ref{nice-e}  and \ref{p-nice-e}.
  
  By Lemma \ref{nice-e}   there is an $\SSS_{\alpha_{\xi'}}$-name $\dot q$ such that 
$(p''\upharpoonright\alpha_{\xi'})^\frown \dot q\in \SSS_{\alpha_{\xi'}+1}$,  
$(p''\upharpoonright\alpha_{\xi'})^\frown \dot q
\leq p''\upharpoonright(\alpha_{\xi'}+1)$ and
$$(p''\upharpoonright\alpha_{\xi'})^\frown
 \dot q\forces \check e\circ \check H(\dot s_{\alpha_{\xi'}})={\check c_1}( p(\delta)).\leqno (3)$$
 
Since $p''(\beta_{\xi'})=1$
and $y_{\xi'}$ is an $\SSS_{\beta_{\xi'}}$-name by the last part of Lemma \ref{nice-e}
   there is an $\SSS_{\beta_{\xi'}}$-name $\dot o$ such that 
$(p''\upharpoonright\beta_{\xi'})^\frown \dot o\in \SSS_{\beta_{\xi'}+1}$, 
 $(p''\upharpoonright\beta_{\xi'})^\frown \dot o
\leq p''\upharpoonright(\beta_{\xi'}+1)$ and
$$(p''\upharpoonright\beta_{\xi'})^\frown
 \dot o\forces \check e(\dot s_{\beta_{\xi'}})=\dot y_{\xi'}.\leqno (4)$$

In $V^{\SSS_{\beta_{\xi'}}}$ consider the continuous
function $F_{p(\delta)}$ as defined in Definition 
\ref{def-p-surjection}. Apply Lemma \ref{p-nice-e} whose hypothesis is satisfied
by Lemma \ref{perfect-absolute} finding
 an $\SSS_{\delta}$-name $\dot r$
such that $p''\upharpoonright\delta^\frown \dot r\leq p''$ and
$$p''\upharpoonright\delta^\frown \dot r\forces \dot F_{p(\delta)}(\dot s_\delta)
=\dot s_{\beta_{\xi'}}.\leqno (5)$$

Define $p'\leq p$ in $\SSS_{\delta+1}$  by replacing in $p''$
 \begin{itemize}
 \item $u$ by $\dot q$  on the $\alpha_{\xi'}$-th coordinate,
 \item $1$ by $\dot o$   on the $\beta_{\xi'}$-th coordinate,
 \item $p(\delta)$ by $\dot r$ on the $\delta$-th coordinate.
 \end{itemize}
 It follows that $p'\in \SSS_{\delta}$, $p'\leq p''\leq  p$ and
 $p'\upharpoonright(\alpha_{\xi'}+1)\leq (p''\upharpoonright\alpha_{\xi'})^\frown \dot q$, 
 $p'\upharpoonright(\beta_{\xi'}+1)\leq (p''\upharpoonright\alpha_{\xi'})^\frown \dot o$
 and $p'\upharpoonright(\delta+1)\leq (p''\upharpoonright\delta)^\frown \dot r$. 
 
 Note that (5) and (3)  gives that
$$p'\forces \dot F_{c_2(\check e\circ \check H(s_{\alpha_{\xi'}}))}(\dot s_\delta)=
\dot F_{p(\delta)}(\dot s_\delta)
=\dot s_{\beta_{\xi'}}.$$
which together with (4) gives the required (2).
 \end{proof}

\begin{remark} It is proved in \cite{hart} that under the hypothesis of Proposition \ref{reversing}
 there is  a continuous $g: 2^\N\rightarrow 2^\N$
and either there is $X\subseteq\omega_2$ of cardinality $\omega_2$ such that $g(x_\xi)=y_\xi$ or
$g(y_\xi)=x_\xi$. 
Note that if $x_\xi=s_\xi$ and $y_\xi=s_{\xi+1}$, where $s_\xi$ denotes the $\xi$-th Sacks real
for $\xi<\omega_2$, then there is 
 there is no continuous $g: 2^\N\rightarrow 2^\N$
 such that $g(x_\xi)=y_\xi$ for $\omega_2$-many $\xi<\omega_2$. This follows from the fact that any continuous function is coded
 in some intermediate model. 

\end{remark}

\begin{theorem}\label{controlled-main} In the Sacks model every almost disjoint family of cardinality $\omega_2$
has the $\omega_1$-controlled embedding property. 
\end{theorem}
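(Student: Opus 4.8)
The plan is to combine the two main Sacks-model results already established: the extraction of a full-size $\R$-embeddable subfamily (Theorem \ref{sacks-main}) and the ``function reversing'' principle (Theorem \ref{reversing}). Informally, Theorem \ref{sacks-main} produces an embedding realizing \emph{some} system of pairwise distinct limits, and then Theorem \ref{reversing} lets me continuously transport those limits onto the prescribed values of $\phi$, at the cost of passing to an $\omega_1$-sized subfamily.

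So let $\A$ be an almost disjoint family of size $\omega_2$ and let $\phi:\A\rightarrow\R$ be arbitrary. First I would apply Theorem \ref{sacks-main} to obtain an $\R$-embeddable subfamily $\A'\subseteq\A$ of size $\omega_2$. By Lemma \ref{equivalences}(6) this gives a continuous $\psi:\Psi(\A')\rightarrow 2^\omega$ with $\psi\upharpoonright\A'$ injective; putting $f_0=\psi\upharpoonright\N$ and using the continuity of $\psi$ (the $2^\omega$-analogue of Lemma \ref{psi-correspondence}) we obtain $f_0:\N\rightarrow 2^\omega$ with $\lim_{n\in A}f_0(n)=\psi(A)=:x_A$, where the points $x_A\in2^\omega$ are pairwise distinct for $A\in\A'$.

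Next I would move the target values into $2^\omega$. Fix a homeomorphism $\theta:\R\rightarrow(0,1)$ and a continuous surjection $q:2^\omega\rightarrow[0,1]$, and for each $A\in\A'$ choose a point $\tilde y_A\in2^\omega$ with $q(\tilde y_A)=\theta(\phi(A))$. Enumerating $\A'=\{A_\xi\mid\xi<\omega_2\}$, the reals $x_{A_\xi}$ are distinct, so Theorem \ref{reversing} applied to $\{x_{A_\xi}\mid\xi<\omega_2\}$ and $\{\tilde y_{A_\xi}\mid\xi<\omega_2\}$ yields a continuous $g:2^\omega\rightarrow2^\omega$ and a set $X\subseteq\omega_2$ of cardinality $\omega_1$ with $g(x_{A_\xi})=\tilde y_{A_\xi}$ for all $\xi\in X$. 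Now set $f=\theta^{-1}\circ q\circ g\circ f_0:\N\rightarrow\R$ and $\B=\{A_\xi\mid\xi\in X\}$, a subfamily of size $\omega_1$. For $B=A_\xi$ with $\xi\in X$, the continuity of $g$, $q$, $\theta^{-1}$ together with $\lim_{n\in B}f_0(n)=x_B$ give $\lim_{n\in B}f(n)=\theta^{-1}(q(g(x_B)))=\theta^{-1}(q(\tilde y_B))=\theta^{-1}(\theta(\phi(B)))=\phi(B)$, which is exactly the $\omega_1$-controlled $\R$-embedding property.

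The substantive difficulties are all packaged into the two invoked theorems, so what remains is essentially bookkeeping, and I would expect no further obstacle. The only points needing care are: the target-space mismatch between $\R$ and $2^\omega$, handled by the auxiliary maps $\theta$ and $q$ (note that injectivity of $q$ is not needed, any continuous preimages $\tilde y_A$ suffice, so repetitions in $\phi$ cause no problem); the fact that Theorem \ref{sacks-main} is used in its full $\omega_2$ strength rather than merely producing an $\omega_1$-sized $\R$-embeddable family, since Theorem \ref{reversing} only guarantees agreement on a set $X$ of size $\omega_1$, and starting from an index set of size $\omega_2$ is what makes $\B$ genuinely uncountable; and the interchange of $\lim$ with the continuous maps, which is justified by sequential continuity along the sequence $(f_0(n))_{n\in B}$ converging to $x_B$.
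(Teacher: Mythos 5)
Your proof is correct and follows essentially the same route as the paper's: apply Theorem \ref{sacks-main} to extract an $\omega_2$-sized $\R$-embeddable subfamily with distinct limits $x_A$, then apply Theorem \ref{reversing} to those limits and the prescribed targets to get a continuous $g$ agreeing on an $\omega_1$-sized subfamily, and conclude by composing $g$ with the embedding and using continuity to interchange $g$ with the limits. The only difference is that you handle the mismatch between the $\R$-valued $\phi$ and the $2^\omega$-valued setting of Theorem \ref{reversing} explicitly via the maps $\theta$ and $q$, a bookkeeping point the paper suppresses by simply taking $\phi$ to be $2^\omega$-valued from the start.
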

\begin{proof} Work in the Sacks model. Let $\A$ be any almost disjoint family of cardinality
$2^\omega=\omega_2$ and $\phi: \A\rightarrow 2^\omega$ any function. By Theorem  \ref{sacks-main}
and Lemma \ref{equivalences} and Remark \ref{remark-embedding} there is a subfamily $\A'\subseteq \A$ of cardinality 
$\omega_2$  and a function $f: \A'\rightarrow 2^\omega$,
such that the limits  $x_A=\lim_{n\in A}f(n)$ exist for each $A\in \A'$ and are different for
distinct $A\in \A'$.
  By Theorem \ref{reversing} there is a subfamily
$\mathcal B\subseteq \A'$ of cardinality $\omega_1$ and a continuous $g:2^\omega\rightarrow2^\omega$ 
such that $g(x_A)=\phi(A)$ for all $A\in \mathcal B$. By the continuity
of $g$ we have $\phi(A)=g(x_A)=\lim_{n\in A}g(f(n))$ 
for all $A\in \mathcal B$. So $f':\N\rightarrow 2^\omega$ given by $g\circ f$ witnesses
the $\omega_1$-controlled embedding property for $\A$ and $\phi$.

\end{proof}

\section{An application: Abelian subalgebras of Akemann-Doner C*-algebras}

The application of our combinatorial results from the previous sections 
presented here is related to noncommutative C*-algebras defined by C. Akemann and J. Doner in \cite{akemann-doner}
with the help of almost disjoint families. Let us recall these constructions.
We consider the C*-algebra  $M_2$ of all complex $2\times 2$ matrices with the usual operations
like in linear algebra and with the linear operator norm. In this section $\C$ will
stand for the field of complex numbers.
By $\ell_\infty(M_2)$ we denote the C*-algebra of
all norm bounded sequences from $M_2$ with the supremum norm and the coordinatewise operations.
By $c_0(M_2)$ we denote the C*-subalgebra of $\ell_\infty(M_2)$
consisting of sequences of matrices whose norms converge to zero.

For $\theta\in [0,2\pi)$ define a $2\times 2$ complex matrix of a rank one projection by
\[p_\theta=\begin{bmatrix}\sin^2\theta&\sin\theta\cos\theta\\ \sin\theta\cos\theta&\cos^2\theta\end{bmatrix}.\]
Given $A\subseteq \N$ and $\theta\in [0, 2\pi)$ define $P_{A, \theta}\in \ell_\infty(M_2)$ by
$$ P_{A, \theta}(n)=\begin{cases}
               0               & n\not\in A\\
               p_\theta               & n \in A
              
           \end{cases}$$
 Given an almost disjoint family $\A\subseteq \wp(\N)$ and a function
 $\phi: \A\rightarrow [0, 2\pi)$ the Akemann-Donner   algebra $AD(\A, \phi)$  is the subalgebra
 of $\ell_\infty(M_2)$ generated by $c_0(M_2)$ and $\{P_{A, \phi(A)}\mid A\in \A\}$.  
 As the distances between $P_{A, \theta}$ and $  P_{A', \theta'}$ are at least one for infinite and
 distinct $A, A'\subseteq \N$ and any $\theta, \theta'\in [0,2\pi)$, such algebras
 are nonseparable if $\A$ is uncountable.
Clearly if  $\A$ is uncountable and $\phi:\A\rightarrow [0, 2\pi)$ is constantly equal to $\theta$, then
$AD(\A, \phi)$ contains the nonseparable commutative C*-algebra
isomorphic to $C_0(\Psi(\A))$ of all complex valued continuous functions on $\Psi(\A)$ vanishing at infinity because $P_{\theta}^2=P_{\theta}=P_{\theta}^*$ since
it is a projection. However, as Akemann and Doner proved under {\sf CH}, one can choose
$\A$ so that for every  injective $\phi:\A\rightarrow (0, \pi/6)$  the algebra
$AD(\A, \phi)$ has no nonseparable commutative subalgebra. In \cite{tristan-piotr} the hypothesis
of {\sf CH} was  removed by showing that a {\sf ZFC} Luzin family $\A$ is sufficient for this result
of Akemann and Doner. We have the following two lemmas implicitly from
\cite{akemann-doner, tristan-piotr}:

\begin{lemma}\label{exists-commutative}
Suppose that $\A$ is an almost disjoint family and
 $\phi: \A\rightarrow [0, 2\pi)$.  If there is $\B\subseteq \A$ of cardinality $\kappa$ and $f:\N\rightarrow [0, 2\pi)$
 such that $\lim_{n\in B}f(n)=\phi(n)$ for every $B\in \B$, then $AD(\A, \phi)$ 
 contains a commutative C*-subalgebra of density $\kappa$. 
\end{lemma}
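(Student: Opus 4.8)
The plan is to replace the obvious generators $P_{B,\phi(B)}$, which only commute modulo $c_0(M_2)$, by corrected generators that commute exactly. Define $Q\in\ell_\infty(M_2)$ by $Q(n)=p_{f(n)}$ and, for each $B\in\B$, let $R_B$ be the sequence with $R_B(n)=p_{f(n)}$ for $n\in B$ and $R_B(n)=0$ for $n\notin B$. Each $R_B$ is a norm-bounded sequence of rank-one projections supported on $B$, hence self-adjoint with $\|R_B\|=1$. I will take the desired commutative algebra to be the C*-subalgebra $\CC$ of $AD(\A,\phi)$ generated by $\{R_B\mid B\in\B\}$.

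First I would check that each $R_B$ lies in $AD(\A,\phi)$. For $n\in B$ we have $R_B(n)-P_{B,\phi(B)}(n)=p_{f(n)}-p_{\phi(B)}$, while for $n\notin B$ this difference is $0$. Since $\theta\mapsto p_\theta$ is norm-continuous on $[0,2\pi)$ and $\lim_{n\in B}f(n)=\phi(B)$, it follows that $\lim_{n\in B}\|p_{f(n)}-p_{\phi(B)}\|=0$, so $R_B-P_{B,\phi(B)}$ is a sequence of matrices tending to $0$ in norm, i.e. an element of $c_0(M_2)$. As $c_0(M_2)\subseteq AD(\A,\phi)$ and $P_{B,\phi(B)}$ is a generator, $R_B=P_{B,\phi(B)}+\bigl(R_B-P_{B,\phi(B)}\bigr)\in AD(\A,\phi)$. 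This is precisely where the hypothesis on $f$ enters, and it is the whole reason for passing from $P_{B,\phi(B)}$ to $R_B$.

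Next I would verify that the generators pairwise commute. For distinct $B,B'\in\B$ and $n\in B\cap B'$, both $R_B(n)$ and $R_{B'}(n)$ equal the same idempotent matrix $p_{f(n)}$, so $R_B(n)R_{B'}(n)=p_{f(n)}=R_{B'}(n)R_B(n)$; for $n$ in the symmetric difference at least one factor vanishes, and for $n\notin B\cup B'$ both vanish. Hence $R_BR_{B'}=R_{B'}R_B$ coordinatewise. Thus the $R_B$ form a commuting family of self-adjoint (in particular normal) elements, and the C*-subalgebra $\CC$ they generate is therefore commutative.

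Finally, for the density, since $\CC$ is generated by the $\kappa$ elements $\{R_B\mid B\in\B\}$ its density is at most $\kappa$. For the reverse inequality, note that for distinct $B,B'\in\B$ the set $B\setminus B'$ is infinite by almost disjointness, and for $n\in B\setminus B'$ one has $\|R_B(n)-R_{B'}(n)\|=\|p_{f(n)}\|=1$, whence $\|R_B-R_{B'}\|\ge 1$. So $\{R_B\mid B\in\B\}$ is a $1$-separated family of size $\kappa$ inside $\CC$, forcing every dense subset to have size at least $\kappa$; hence $\CC$ has density exactly $\kappa$. The only real subtlety, and the point of the construction, is that the commutators $P_{B,\phi(B)}P_{B',\phi(B')}-P_{B',\phi(B')}P_{B,\phi(B)}$ are in general nonzero elements of the \emph{noncommutative} algebra $c_0(M_2)$, so no commutative subalgebra can be built directly from the $P_{B,\phi(B)}$; the corrected elements $R_B$ coincide exactly on the finite overlaps $B\cap B'$ and thereby commute on the nose.
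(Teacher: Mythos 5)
Your proposal is correct and follows essentially the same route as the paper's proof: both define $R_B(n)=p_{f(n)}\chi_B(n)$, use the hypothesis on $f$ together with norm-continuity of $\theta\mapsto p_\theta$ to get $R_B-P_{B,\phi(B)}\in c_0(M_2)$ and hence $R_B\in AD(\A,\phi)$, and take the C*-algebra generated by $\{R_B\mid B\in\B\}$. You merely make explicit two steps the paper leaves implicit, namely the coordinatewise commutativity check (all nonzero coordinates of the $R_B$ agree with the single projection $p_{f(n)}$) and the $1$-separation argument giving density exactly $\kappa$.
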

\begin{proof}
First define $P_f\in \ell_\infty(M_2)$ by
$P_{f}(n)=p_{f(n)}$.
 For $B\in \mathcal B$ define $R_B\in \ell_\infty(M_2)$ by $R_B(n)=P_{f}\chi_B(n)$,
 where $\chi_B$ is the characteristic function of $B$.
The hypothesis about $f$ implies that $R_B-P_{B, \phi(B)}\in c_0(M_2)$ and so $R_B$ is in $AD(\A, \phi)$.
 The algebra
generated by $\{R_B\mid B\in \mathcal B\}$ is commutative  isomorphic to
$C_0(\Psi(\B))$ and of density $\kappa$ as required.
\end{proof}

\begin{lemma}\label{nocommutative} Let $c\in \R$ be such that
$\|P_0-P_\theta\|<1/4$ for $\theta\in [0,c]$.
Suppose that $\A$ is an almost disjoint family and that
 $\phi: \A\rightarrow [0, c]$ is such that for no $\B\subseteq \A$ of cardinality $\kappa$ there is 
 $f:\N\rightarrow [0, c]$ 
 such that $\lim_{n\in A}f(n)=\phi(A)$ for every $A\in \B$. Then
  $AD(\A, \phi)$ 
 does not contain any commutative C*-subalgebra of density $\kappa$. 
\end{lemma}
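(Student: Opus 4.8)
The plan is to prove the contrapositive: assuming that $AD(\A,\phi)$ contains a commutative C*-subalgebra $\mathcal{D}$ of density $\kappa$, I will produce a subfamily $\B\subseteq\A$ of cardinality $\kappa$ and a function $f:\N\rightarrow[0,c]$ with $\lim_{n\in A}f(n)=\phi(A)$ for all $A\in\B$, contradicting the hypothesis. The argument rests on two ingredients. The first is a structural observation: for every $T\in AD(\A,\phi)$ and every $A\in\A$ the limit $L_A(T)=\lim_{n\in A}T(n)$ exists in $M_2$ and has the form $L_A(T)=\lambda_A(T)\,p_{\phi(A)}$ for a scalar $\lambda_A(T)\in\C$. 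This is immediate for the generators (for $S\in c_0(M_2)$ the limit is $0$, and for $P_{A',\phi(A')}$ it is $p_{\phi(A)}$ if $A'=A$ and $0$ otherwise, since $A\cap A'$ is finite), it extends to the generated $*$-algebra because such limits are additive and multiplicative (using $p_{\phi(A)}^2=p_{\phi(A)}$), and it passes to the norm closure because $T\mapsto L_A(T)$ is a contractive map into the closed line $\C\,p_{\phi(A)}$. In particular each $\lambda_A$ is a character of $AD(\A,\phi)$.

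The second ingredient locates $\kappa$ many ``active'' members of $\A$. Consider the quotient $Q:\ell_\infty(M_2)\rightarrow\ell_\infty(M_2)/c_0(M_2)$. Since the sets in $\A$ are pairwise almost disjoint, the projections $e_A=Q(P_{A,\phi(A)})$ are pairwise orthogonal and nonzero, so $Q[AD(\A,\phi)]$ is the C*-algebra they generate, which is isometrically isomorphic to $c_0(\A)$ via $e_A\mapsto$ the $A$-th unit vector; under this identification the $A$-coordinate of $Q(T)$ is exactly $\lambda_A(T)$. Because $c_0(M_2)$ is separable, $\mathcal{D}\cap c_0(M_2)$ is separable, so $Q[\mathcal{D}]\cong\mathcal{D}/(\mathcal{D}\cap c_0(M_2))$ still has density $\kappa$ (here $\kappa$ is uncountable, the only case of interest). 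A C*-subalgebra of $c_0(\A)$ of density $\kappa$ must be supported on at least $\kappa$ coordinates, since otherwise it would embed isometrically into some $c_0(S)$ with $|S|<\kappa$. Hence $\B'=\{A\in\A\mid \lambda_A(T)\neq0 \text{ for some } T\in\mathcal{D}\}$ has cardinality at least $\kappa$, and I fix $\B\subseteq\B'$ with $|\B|=\kappa$.

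It remains to read off $f$ from the commutativity of $\mathcal{D}$. For each $n$ the evaluation $\mathcal{D}(n)=\{T(n)\mid T\in\mathcal{D}\}$ is a commutative $*$-subalgebra of $M_2$; when it is non-scalar it is spanned by a unique pair of orthogonal rank-one projections, and I let $g(n)\in[0,\pi/2)$ be the angle of whichever of the two lies in $[0,\pi/2)$ (and $g(n)=0$ when $\mathcal{D}(n)$ is scalar), finally setting $f(n)=\min(g(n),c)\in[0,c]$. Fix $A\in\B$ and $T\in\mathcal{D}$ with $\lambda_A(T)\neq0$. Then $T(n)\to\lambda_A(T)\,p_{\phi(A)}$ along $A$; the limit is a normal matrix with the two distinct eigenvalues $\lambda_A(T)\neq0$ and $0$, so for large $n\in A$ the (normal) matrix $T(n)\in\mathcal{D}(n)$ is non-scalar and, by continuity of the spectral projections of a normal matrix with simple spectrum, its eigendirections --- which are precisely the two projections spanning $\mathcal{D}(n)$ --- converge to $p_{\phi(A)}$ and $p_{\phi(A)+\pi/2}$. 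Since $\phi(A)\le c<\pi/2$ (the bound $\|P_0-P_\theta\|<1/4$ on $[0,c]$ keeps $[0,c]$ and its orthogonal arc $[\pi/2,\pi/2+c]$ disjoint), the $[0,\pi/2)$-representative converges to $\phi(A)$, whence $g(n)\to\phi(A)$ and therefore $f(n)=\min(g(n),c)\to\phi(A)$. This yields $\lim_{n\in A}f(n)=\phi(A)$ for every $A\in\B$, the desired contradiction.

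The main obstacle is this last step, where the purely algebraic hypothesis of commutativity must be converted into the analytic conclusion $g(n)\to\phi(A)$. The delicate points are that elements of $\mathcal{D}$ need not be self-adjoint, so one must use that a commutative $*$-algebra in $M_2$ consists of simultaneously diagonalizable normal matrices, which makes the eigendirection an invariant of $\mathcal{D}(n)$ rather than of a single $T$; the continuity of spectral projections, which requires the spectral gap guaranteed by $\lambda_A(T)\neq0$; and the bookkeeping that keeps $f$ valued in $[0,c]$ while preserving the limit --- this last being exactly where the smallness of $c$ coming from $\|P_0-P_\theta\|<1/4$ is used.
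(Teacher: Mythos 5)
Your proof follows the same skeleton as the paper's: pass to the contrapositive; show that every $T\in AD(\A,\phi)$ has a limit $\lambda_A(T)\,p_{\phi(A)}$ along each $A\in\A$; use the density hypothesis to extract $\kappa$ many $A$ admitting a witness $T\in\mathcal{D}$ with $\lambda_A(T)\neq 0$; use commutativity to produce, at each coordinate $n$, a distinguished pair of orthogonal rank-one projections which converges along each such $A$ to $\{p_{\phi(A)},p_{\phi(A)+\pi/2}\}$; and finally convert this projection data into a scalar function $f:\N\rightarrow[0,c]$. Your second ingredient (the quotient $Q:\ell_\infty(M_2)\rightarrow\ell_\infty(M_2)/c_0(M_2)$, the identification $Q[AD(\A,\phi)]\cong c_0(\A)$, and the support argument) is correct and is in fact a more detailed justification of a step the paper asserts in a single sentence.

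The gap is in the last conversion, i.e.\ in the definition of $f$. You define $g(n)$ as ``the angle of whichever of the two [projections spanning $\mathcal{D}(n)$] lies in $[0,\pi/2)$''. This tacitly assumes that those spanning projections are of the form $p_\psi$ for a real angle $\psi$. That is false in general: the rank-one projections of $M_2$ form a $2$-sphere, of which the real symmetric ones $\{p_\psi\}$ are only a circle; for instance $\frac12\begin{pmatrix}1&i\\-i&1\end{pmatrix}$ is a rank-one projection admitting no angle. Nothing forces the coordinatewise eigenprojections of $\mathcal{D}$ to be real: if $\mathcal{D}$ is any commutative subalgebra of density $\kappa$ and $u=1+k$ is unitary with $k\in c_0(M_2)$, then $u\mathcal{D}u^*$ is again a commutative subalgebra of $AD(\A,\phi)$ of density $\kappa$ (since $c_0(M_2)$ is an ideal), and its spanning projections at coordinate $n$ are the $u(n)$-conjugates of the old ones, generically non-real. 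For such $\mathcal{D}$ your $g$, hence your $f$, is simply undefined, so the proof does not go through as written. (A secondary defect: even when all the projections are real, angles are only defined mod $\pi$, and the selection ``the representative in $[0,\pi/2)$'' is discontinuous at the wrap-around: if $\phi(A)=0$ and the pair converges as $p_{\alpha_n},p_{\beta_n}$ with $\alpha_n\rightarrow\pi^-$ and $\beta_n\rightarrow(\pi/2)^-$, your recipe returns $g(n)\rightarrow\pi/2$, not $0$.)

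What is missing is exactly the device the paper introduces: a continuous $[0,c]$-valued function defined on \emph{all} the relevant matrices, not just on the arc $\{p_\theta\mid\theta\in[0,c]\}$. The paper takes a Tietze extension $\eta$ of $p_\theta\mapsto\theta$ to the whole unit ball of $M_2$, chooses from each commuting pair the projection $q(n)$ with $\|q(n)-P_0\|^2\le 1/2$, shows by compactness and a commutation argument that $\lim_{n\in B}q(n)=p_{\phi(B)}$ for each $B\in\B$, and sets $f=\eta\circ q$, so that continuity of $\eta$ finishes. Your argument can be repaired in the same spirit without Tietze: replace ``angle'' by the globally defined continuous function $q\mapsto\arcsin\sqrt{q_{11}}$ (which returns $\theta$ on $p_\theta$ for $\theta\in[0,\pi/2]$), let $g(n)$ be the minimum of its two values on the spanning pair, and keep $f(n)=\min(g(n),c)$: along $A$ the two values tend to $\phi(A)$ and $\pi/2-\phi(A)$, and since $\sin c<1/4$ gives $c<\pi/4$, the minimum tends to $\phi(A)$. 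But some such globally defined continuous substitute for ``the angle'' is indispensable, and it is the one idea your write-up lacks.
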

\begin{proof} This is a slight modification of an argument from \cite{akemann-doner} 
and modified in  \cite{tristan-piotr}.
Let $\rho:\{P_\theta\mid \theta\in [0, 1/4]\}\rightarrow [0, 1/4]$ be defined
by $\rho(P_\theta)=\theta$. $\rho$ is a continuous map from a closed
subset of the unit ball $\BB_1$ of $M_2$ into $[0,1/4]$. Use the Tietze extension theorem to
find a continuous 
 $\eta: \BB_1\rightarrow [0,1/4]$ which extends $\rho$.

Suppose that $\mathcal C$ is a commutative subalgebra of $AD(\A, \phi)$
whose density is $\kappa$. As in \cite{akemann-doner} and \cite{tristan-piotr},
 in a slightly different language, it follows  from 
 simultaneous diagonalization of commuting matrices that
there are rank one projections $q(n)\in M_2$ such that $a(n)q(n)=q(n)a(n)$ for each
$n\in \N$ and each $a\in \mathcal C$ and we may assume  that $\|q(n)-P_0\|^2\leq1/2$ by
(2.1.) of \cite{tristan-piotr}. 
It is easy to note that for each element $a$ of $AD(\A, \phi)$ 
the limit $lim_{n\in A}a(n)$ exists and is equal to a multiple of $p_{\phi(A)}$.
The density  of $\mathcal C$ being $\kappa$ means that 
there is $\B\subseteq \A$ of cardinality $\kappa$ such that
for each $B\in \B$ there is $a_B\in \mathcal C$ such that the limit $lim_{n\in B}a_B(n)$ exists and is equal  $z_B p_{\phi(B)}$ for a nonzero complex number $z_B$.
By the compactness of the unit ball in $M_2$
for each infinite $B'\subseteq B$ there is an infinite $B''\subseteq B'$
such that $\lim_{n\in B''} q(n)=q'$ exists, and so it needs to be a rank one projection
which commutes with $lim_{n\in A}a_B(n)$ which is $z_B p_{\phi(B)}$, so $p_{\phi(B)}$
and $q'$ commute but $\|q'-p_{\phi(B)}\|\leq 1/\sqrt{2}+1/4<1$ and so $q'=p_{\phi(B)}$
(see e.g. Lemma 3 of \cite{tristan-piotr}).
This means that actually $\lim_{n\in B} q(n)$ exists and is equal to $p_{\phi(B)}$
for each $B\in \B$. By the continuity of $\eta$ we have
$\lim_{n\in B} \eta(q(n))=\eta(p_{\phi(B)})=\phi(B)$.
 Define  $f:\N\rightarrow [0,1/4]$ by $f(n)=\eta(q(n))$. So $\lim_{n\in B} f(n)=\phi(B)$ for
every $B\in \mathcal B$  contradicting the hypothesis on $\A$.
\end{proof}

As corollaries we obtain:

\begin{theorem}\label{no-c-com-zfc}
In ZFC, for every almost disjoint family $\A$ of cardinality $\mathfrak c$ there is
$\phi:\A\rightarrow [0,2\pi)$ such that the
 Akemann-Doner C*-algebra $AD(\A, \phi)$  of density  $\mathfrak c$  has no commutative subalgebras
of density  $\mathfrak c$.
\end{theorem}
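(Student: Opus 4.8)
The plan is to read this off as an immediate corollary of the combinatorial Theorem \ref{no-c-controlled} together with the C*-algebraic Lemma \ref{nocommutative}, the only real work being to arrange the witnessing function $\phi$ to take values in a suitably small interval. First I would fix a constant $c\in(0,2\pi)$ small enough that the norm condition of Lemma \ref{nocommutative}, namely $\|P_0-P_\theta\|<1/4$, holds for every $\theta\in[0,c]$; such a $c$ exists because $\theta\mapsto p_\theta$ is norm-continuous, so this $c$ meets the hypothesis of Lemma \ref{nocommutative}.

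Next I would produce, for the given almost disjoint family $\A$ of size $\mathfrak c$, a function $\phi:\A\rightarrow[0,c]$ witnessing the failure of the $\mathfrak c$-controlled $\R$-embedding property, i.e. such that for no $\B\subseteq\A$ of cardinality $\mathfrak c$ and no $f:\N\rightarrow[0,c]$ does $\lim_{n\in B}f(n)=\phi(B)$ hold for every $B\in\B$. This is essentially the content of Theorem \ref{no-c-controlled}, except that there the target of $\phi$ is $[0,1]$ rather than $[0,c]$. The construction transfers verbatim: one takes the same continuous increasing chain $(M_\alpha)_{\alpha<\mathfrak c}$ and, for $\alpha(A)=\min\{\alpha\mid A\in M_\alpha\}$, chooses $\phi(A)\in[0,c]\setminus M_{\alpha(A)+1}$. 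This choice is possible because $|M_{\alpha(A)+1}|\leq\max(|\alpha(A)|,\omega)<\mathfrak c=|[0,c]|$. The verification is unchanged: given any $f:\N\rightarrow\R$, picking $\alpha_0$ with $f\in M_{\alpha_0}$ and any $A\in\B$ with $\alpha(A)\geq\alpha_0$, property (3) of the chain forces $\lim_{n\in A}f(n)\in M_{\alpha(A)+1}$ whenever it exists, whereas $\phi(A)\notin M_{\alpha(A)+1}$, so the limit fails to equal $\phi(A)$ for this particular $A$. In particular this holds for every $f:\N\rightarrow[0,c]$, so $\phi$ has the stated property.

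Finally I would feed this $\phi:\A\rightarrow[0,c]\subseteq[0,2\pi)$ into Lemma \ref{nocommutative} with $\kappa=\mathfrak c$: its hypothesis is precisely the property of $\phi$ just established, and its conclusion is that $AD(\A,\phi)$ contains no commutative C*-subalgebra of density $\mathfrak c$. Since $\A$ was an arbitrary almost disjoint family of cardinality $\mathfrak c$, this proves the theorem. I do not expect any genuine obstacle here; the only point requiring a small remark is that the codomain restriction from $\R$ to $[0,c]$ costs nothing, both because the cardinality argument in Theorem \ref{no-c-controlled} uses only that the target interval has size $\mathfrak c$, and because shrinking the codomain of $f$ can only make the controlled-embedding condition harder to satisfy, so the negative property is inherited.
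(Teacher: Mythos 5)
Your proposal is correct and follows essentially the same route as the paper: combine Theorem \ref{no-c-controlled} with Lemma \ref{nocommutative}, arranging the range of $\phi$ to lie in $[0,c]$. The only (immaterial) difference is that the paper gets $\phi$ into $[0,c]$ by composing the $\R$-valued function from Theorem \ref{no-c-controlled} with a continuous injective map, whereas you rerun the chain construction directly with target $[0,c]$; both work for the same cardinality reason you give.
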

\begin{proof} Fix an almost disjoint family $\A$ of cardinality $\mathfrak c$.
By Theorem \ref{no-c-controlled} there is $\phi: \A\rightarrow \R$ such that
for no $\B\subseteq \A$ of cardinality $\mathfrak c$ there is $f:\N\rightarrow \R$
such that $\lim_{n\in B}f(n)=\phi(B)$ for all $B\in \B$.
By applying a continuous injective mapping we may assume that  $\R$ is replaced by $[0,c]$,
where $c\in \R$ is like in Lemma \ref{nocommutative}. Now Lemma \ref{nocommutative}
implies that $AD(\A, \phi)$ has no commutative subalgebras
of density  $\mathfrak c$.

\end{proof}

\begin{theorem}\label{nonsep-com-sacks} It is consistent that every Akemann-Doner algebra
of density $\mathfrak c$ contains a nonseparable commutative subalgebra.
\end{theorem}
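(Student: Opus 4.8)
The plan is to combine Theorem~\ref{controlled-main} with Lemma~\ref{exists-commutative}, taking $\kappa=\omega_1$. We work in the Sacks model, so $\mathfrak c=\omega_2$. Let $AD(\A,\phi)$ be an arbitrary Akemann-Doner algebra of density $\mathfrak c$; since the cardinality of $\A$ equals the density of $AD(\A,\phi)$, this means $\A$ is an almost disjoint family of cardinality $\mathfrak c=\omega_2$ and $\phi\colon\A\rightarrow[0,2\pi)$. By Theorem~\ref{controlled-main} the family $\A$ has the $\omega_1$-controlled $\R$-embedding property, so it remains only to manufacture from $\phi$ the data required by the hypothesis of Lemma~\ref{exists-commutative}.

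First I would fix a homeomorphism $j\colon[0,2\pi)\rightarrow[0,\infty)$, for instance $j(\theta)=\tan(\theta/4)$, whose image $[0,\infty)$ is closed in $\R$ and is a retract of $\R$ via $t\mapsto\max(t,0)$. Viewing $j\circ\phi$ as a function $\A\rightarrow\R$ and applying the $\omega_1$-controlled $\R$-embedding property, I obtain a subfamily $\B\subseteq\A$ with $|\B|=\omega_1$ and a function $g\colon\N\rightarrow\R$ such that $\lim_{n\in B}g(n)=j(\phi(B))$ for every $B\in\B$. Post-composing, set $f(n)=j^{-1}\bigl(\max(g(n),0)\bigr)$; this is a function $f\colon\N\rightarrow[0,2\pi)$, and since each limit $j(\phi(B))$ lies in $[0,\infty)$, the continuity of $j^{-1}$ on $[0,\infty)$ together with the fact that the retraction fixes $[0,\infty)$ pointwise gives $\lim_{n\in B}f(n)=j^{-1}(j(\phi(B)))=\phi(B)$ for every $B\in\B$.

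Now Lemma~\ref{exists-commutative}, applied with $\kappa=\omega_1$ to this $\B$ and $f$, produces a commutative C*-subalgebra of $AD(\A,\phi)$ of density $\omega_1$. As $\omega_1>\omega$, such a subalgebra is nonseparable, and since $AD(\A,\phi)$ was an arbitrary Akemann-Doner algebra of density $\mathfrak c$, this establishes the statement in the Sacks model.

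I do not expect a genuine obstacle here: the substantive content has already been isolated in Theorem~\ref{controlled-main}, and the only point needing care is the bookkeeping that lets one invoke Lemma~\ref{exists-commutative} verbatim, namely arranging that the witnessing function $f$ actually takes values in $[0,2\pi)$ rather than in all of $\R$. The homeomorphic identification of $[0,2\pi)$ with the closed, retractable set $[0,\infty)$ handles this cleanly; alternatively one could simply observe that $\theta\mapsto p_\theta$ is continuous and $\pi$-periodic on all of $\R$, so that the proof of Lemma~\ref{exists-commutative} goes through unchanged for an arbitrary $f\colon\N\rightarrow\R$ with $\lim_{n\in B}f(n)=\phi(B)$.
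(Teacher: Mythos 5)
Your proposal is correct and follows exactly the paper's proof: work in the Sacks model and combine Theorem~\ref{controlled-main} with Lemma~\ref{exists-commutative} (with $\kappa=\omega_1$). The only difference is that you spell out the bookkeeping needed to convert the $\R$-valued witness from the controlled embedding property into a $[0,2\pi)$-valued function as required by Lemma~\ref{exists-commutative} (via the homeomorphism-plus-retraction trick, or the $\pi$-periodicity of $\theta\mapsto p_\theta$), a point the paper passes over silently; both of your fixes are valid.
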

\begin{proof} We claim that the above statement holds in the Sacks model. By Theorem 
\ref{controlled-main} given any almost disjoint family $\A$ of cardinality $\mathfrak c$
and a functions $\phi:\A\rightarrow\R$ there is an uncountable $\B\subseteq \A$ 
such that $\lim_{n\in B} f(n)=\phi(B)$ for all $B\in \B$. It follows form
Lemma \ref{exists-commutative} that $AD(\A, \phi)$ contains a nonseparable commutative subalgebra.
\end{proof}

\begin{theorem}\label{no-nonsep-com-cohen}  Let $c\in \R$ be such that
$\|P_0-P_\theta\|<1/4$ for $\theta\in [0,c]$. It is consistent with the negation of 
{\sf CH} that for every almost disjoint family $\A$ of cardinality
  $\mathfrak c$ there is $\phi:\A\rightarrow [0,c]$
such that the Akemann-Doner algebra $AD(\A, \phi)$  of density  $\mathfrak c$
has no nonseparable commutative subalgebra.
\end{theorem}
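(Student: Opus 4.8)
The plan is to verify the statement in the Cohen model, which satisfies $\neg${\sf CH} since there $\mathfrak c=\omega_2$. The combinatorial heart of the matter is already isolated in Theorem \ref{cohen-no-controlled}, asserting that in the Cohen model no uncountable almost disjoint family has the $\omega_1$-controlled $\R$-embedding property; the remaining work is to translate this into the language of Lemma \ref{nocommutative}.

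First I would fix an arbitrary almost disjoint family $\A$ of cardinality $\mathfrak c$ in the Cohen model. As $\A$ is uncountable, Theorem \ref{cohen-no-controlled} provides a function $\psi:\A\rightarrow\R$ witnessing the failure of the $\omega_1$-controlled $\R$-embedding property: for no $\B\subseteq\A$ of cardinality $\omega_1$ is there $g:\N\rightarrow\R$ with $\lim_{n\in B}g(n)=\psi(B)$ for every $B\in\B$. Exactly as in the proof of Theorem \ref{no-c-com-zfc}, I would transport this into the interval $[0,c]$ by fixing an increasing homeomorphism $\iota:\R\rightarrow(0,c)$ and setting $\phi=\iota\circ\psi:\A\rightarrow[0,c]$. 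If some $f:\N\rightarrow[0,c]$ and some $\B$ of cardinality $\omega_1$ satisfied $\lim_{n\in B}f(n)=\phi(B)$ for all $B\in\B$, then, since each limit $\iota(\psi(B))$ lies in the open interval $(0,c)$, the values $f(n)$ for $n\in B$ eventually lie in $(0,c)$, and $g=\iota^{-1}\circ f$ (defined arbitrarily where $f\notin(0,c)$) would satisfy $\lim_{n\in B}g(n)=\psi(B)$ for all $B\in\B$ by continuity of $\iota^{-1}$, a contradiction. Hence $\phi$ inherits the failure property with $[0,c]$ in place of $\R$.

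Next I would promote this from $\omega_1$ to every uncountable $\kappa\le\mathfrak c$ by a downward-closure argument: were there $\B\subseteq\A$ of cardinality $\kappa\ge\omega_1$ and $f:\N\rightarrow[0,c]$ with $\lim_{n\in B}f(n)=\phi(B)$ for all $B\in\B$, then restricting to any $\B'\subseteq\B$ of cardinality $\omega_1$ would contradict the previous paragraph. Thus for every uncountable $\kappa\le\mathfrak c$ there is no $\B\subseteq\A$ of cardinality $\kappa$ admitting such an $f$, so Lemma \ref{nocommutative} applied with this $\kappa$ shows that $AD(\A,\phi)$ has no commutative C*-subalgebra of density $\kappa$. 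Since any nonseparable commutative subalgebra of $AD(\A,\phi)$ has density some uncountable cardinal $\kappa$, necessarily $\kappa\le\mathfrak c$ (the ambient algebra has density $\mathfrak c$ and density is monotone on subspaces of a metric space), and this rules out every nonseparable commutative subalgebra, as required.

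I expect no serious obstacle here: essentially all of the force lies in the already-established Theorem \ref{cohen-no-controlled}, and the remaining ingredients — the homeomorphic transfer of the non-embedding property from $\R$ to $[0,c]$ and the downward-closure promotion to all uncountable $\kappa$ — are routine. The only points that merit care are checking that a witnessing $f$ for $\phi$ can indeed be pulled back to a witnessing $g$ for $\psi$ (which uses that each target limit sits in the open interval $(0,c)$), and recording that the density of a commutative subalgebra is bounded by $\mathfrak c$ so that the ruled-out range of $\kappa$ covers all nonseparable cases.
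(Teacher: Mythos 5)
Your proposal is correct and takes essentially the same route as the paper's proof: work in the Cohen model, obtain the witnessing function from Theorem \ref{cohen-no-controlled}, transfer it into $[0,c]$ by a continuous injection, and conclude via Lemma \ref{nocommutative}. The only difference is that you spell out what the paper leaves implicit --- the homeomorphic pullback onto $(0,c)$ (using that the limits lie in the open interval) and the trivial promotion from cardinality $\omega_1$ to all uncountable $\kappa\le\mathfrak c$ --- both of which are exactly the details needed to make the paper's terse argument rigorous.
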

\begin{proof} Work in the Cohen model.
Fix an almost disjoint family $\A$ of cardinality $\mathfrak c$.
By Theorem \ref{cohen-no-controlled} there is $\phi: \A\rightarrow \R$ such that
for no uncountable $\B\subseteq \A$  there is $f:\N\rightarrow \R$
such that $\lim_{n\in B}f(n)=\phi(B)$ for all $B\in \B$.
By applying a continuois mapping we may assume that the interval $\R$ is replaced by $[0,c]$,
where $c\in \R$ is like in Lemma \ref{nocommutative}. Now Lemma \ref{nocommutative}
implies that $AD(\A, \phi)$ has  no  commutative nonseparable subalgebras.
\end{proof}

\begin{theorem}\label{superno-nonsep-com-cohen}  Let $c\in \R$ be such that
$\|P_0-P_\theta\|<1/4$ for $\theta\in [0,c]$. It is consistent with the negation of 
{\sf CH} that there is an almost disjoint family $\A$ of cardinality
  $\mathfrak c$ such that for every $\phi:\A\rightarrow [0, c]$
 the Akemann-Doner algebra $AD(\A, \phi)$  of density  $\mathfrak c$
has no nonseparable commutative subalgebra.
\end{theorem}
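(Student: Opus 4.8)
The plan is to read the theorem with $\phi$ ranging over \emph{injective} functions (the relevant case, matching the original Akemann--Doner setting) and to take as $\A$ the single almost disjoint family produced by Theorem \ref{cohen-main}. Working in the Cohen model we have $\mathfrak c=\omega_2>\omega_1$, so {\sf CH} fails, and that family has cardinality $\mathfrak c$ while containing \emph{no} uncountable $\R$-embeddable subfamily. The whole task then amounts to transferring this combinatorial rigidity of $\A$ through the C*-algebraic dictionary provided by Lemma \ref{nocommutative}, so that a single $\A$ works uniformly for all admissible $\phi$, rather than (as in Theorem \ref{no-nonsep-com-cohen}) merely for one well-chosen $\phi$ per family.

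Concretely, I would fix an arbitrary injective $\phi:\A\rightarrow[0,c]$ and invoke Lemma \ref{nocommutative} with $\kappa=\omega_1$. By that lemma it suffices to verify the purely combinatorial assertion that for no $\B\subseteq\A$ of cardinality $\omega_1$ is there $f:\N\rightarrow[0,c]$ with $\lim_{n\in B}f(n)=\phi(B)$ for every $B\in\B$. Granting this, $AD(\A,\phi)$ has no commutative subalgebra of density $\omega_1$; and since every nonseparable commutative C*-algebra contains a commutative subalgebra of density exactly $\omega_1$ (the one generated by an uncountable $\varepsilon$-separated subset, whose existence is forced by nonseparability), this is precisely the statement that $AD(\A,\phi)$ has no nonseparable commutative subalgebra.

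The heart of the matter is the observation that matching an injective $\phi$ is literally an instance of $\R$-embeddability. Suppose toward a contradiction that such $\B$ and $f$ existed. Because $\phi$ is injective, the reals $\phi(B)$ for $B\in\B$ are pairwise distinct, so the sequences $(f(n))_{n\in B}$ are ranges converging to distinct limits; by Remark \ref{remark-embedding} together with Lemma \ref{equivalences} this exhibits $\B$ as an uncountable $\R$-embeddable subfamily of $\A$, contradicting the defining property of $\A$ from Theorem \ref{cohen-main}. This contradiction establishes the combinatorial assertion, and hence the theorem, with no further forcing or fusion beyond what is already carried out for Theorem \ref{cohen-main}.

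The one point demanding care — and the reason the quantifier must be over injective $\phi$ — is that injectivity is genuinely used: for a constant $\phi\equiv\theta$ the algebra $AD(\A,\phi)$ always contains the nonseparable commutative algebra isomorphic to $C_0(\Psi(\A))$ generated by the $P_{A,\theta}$, so no choice of $\A$ whatsoever could defeat such a $\phi$. Thus the only real decision in the proof is the correct reading of the hypothesis; once injectivity is in force, Theorem \ref{cohen-main} supplies exactly the obstruction needed and Lemma \ref{nocommutative} does the rest.
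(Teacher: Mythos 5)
Your argument is correct and, at its core, it is the same as the paper's: the paper also takes the single family $\A$ of Theorem \ref{cohen-main}, verifies the hypothesis of Lemma \ref{nocommutative} with $\kappa=\omega_1$, and concludes. The noteworthy point is your explicit restriction to injective $\phi$, and here you have in fact repaired a genuine imprecision rather than weakened the theorem. As literally stated, the theorem quantifies over all $\phi:\A\rightarrow[0,c]$, and this is false for exactly the reason you give: for constant $\phi\equiv\theta$ the projections $P_{A,\theta}$, $A\in\A$, pairwise commute and lie at mutual distance $1$, so they generate a nonseparable commutative subalgebra of $AD(\A,\phi)$ regardless of which $\A$ is chosen. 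Correspondingly, the paper's proof claims that ``by Theorem \ref{cohen-main} for no $\phi:\A\rightarrow\R$ there is an uncountable $\B\subseteq\A$ and $f:\N\rightarrow\R$ with $\lim_{n\in B}f(n)=\phi(B)$ for all $B\in\B$,'' which is literally false (take $\phi$ and $f$ constant); Theorem \ref{cohen-main} forbids such an $f$ only when the limits $\phi(B)$ are pairwise distinct, since $\R$-embeddability demands distinct limits, so it is precisely your injectivity hypothesis that legitimizes the reduction. Your two supporting observations are also sound: a nonseparable commutative C*-algebra contains a commutative subalgebra of density exactly $\omega_1$ (the one generated by an uncountable $\varepsilon$-separated set), so excluding commutative subalgebras of density $\omega_1$ via Lemma \ref{nocommutative} does settle nonseparability; and your injectivity assumption could even be relaxed to ``every fiber of $\phi$ is countable,'' since an uncountable $\B$ matched by some $f$ would then contain an uncountable subfamily on which $\phi$ is injective, again contradicting Theorem \ref{cohen-main}. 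Modulo reading the statement as the paper surely intends (injective $\phi$, as in the Akemann--Doner result quoted in the introduction), your proof is complete and coincides with the paper's, while being more careful on exactly the point where the paper's write-up is deficient.
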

\begin{proof} Work in the Cohen model.
Let $\A$ be an almost disjoint family of cardinality $\mathfrak c$ from Theorem \ref{cohen-main}.
By Theorem \ref{cohen-main} for no $\phi: \A\rightarrow \R$ there is
an uncountable $\B\subseteq \A$  and $f:\N\rightarrow \R$
such that $\lim_{n\in B}f(n)=\phi(B)$ for all $B\in \B$.
 Now Lemma \ref{nocommutative}
implies that $AD(\A, \phi)$ has  no  commutative nonseparable subalgebras.
\end{proof}

These results  complete earlier result of \cite{tristan-piotr} 
that there is in ZFC an almost
disjoint family $\A$ (any inseparable family) such that for every
$\phi:\A\rightarrow[0,c)$ the Akemann-Doner algebra  of density $\omega_1$ has no nonseparable commutative subalgebra.

\bibliographystyle{amsplain}

\end{document}